\documentclass{amsart}

\usepackage{amsmath,amssymb,amsthm, mathrsfs, mathtools}
\usepackage{amscd,mathtools}
  \usepackage{geometry}
  \usepackage{graphicx,epstopdf}
  \usepackage{color,xcolor}
  \usepackage{enumerate}
  \usepackage{xspace}
  \usepackage{tipa}
  \usepackage{appendix}
  \usepackage{xcolor}

\usepackage{pgf,tikz}
\usetikzlibrary{arrows}
\usepackage[all]{xy}
\usepackage{tikz-cd}
\tikzcdset{arrow style=tikz,diagrams={>=stealth}}
\usetikzlibrary{patterns.meta}

\tikzstyle{stuff_fill}=[rectangle,fill=white,minimum size=1em]
\tikzstyle{stuff_fillc}=[circle,fill=white,minimum size=1em]

  \usepackage{epsfig}
  \usepackage{pinlabel}
  \usepackage[all]{xy}
  
\usepackage{tikz}
\usepackage{caption, subcaption}
\usepackage{tikz-cd}
\usetikzlibrary{calc,decorations.pathreplacing}
\usepackage{tikz}
\usetikzlibrary{calc}
\usetikzlibrary{arrows}
\usetikzlibrary{decorations.pathreplacing}
\usetikzlibrary{intersections}

%

\usepackage{mathrsfs}
\usetikzlibrary{matrix}
\usetikzlibrary{positioning}
\DeclareMathAlphabet{\pazocal}{OMS}{zplm}{m}{n}

\usepackage{mathrsfs}
\usetikzlibrary{matrix}
\usetikzlibrary{positioning}
\DeclareMathAlphabet{\pazocal}{OMS}{zplm}{m}{n}
\tikzset{>=stealth}

\RequirePackage{hyperref}
\hypersetup{pdfpagemode={UseOutlines},
bookmarksopen=true,
bookmarksopenlevel=0,
hypertexnames=false,
colorlinks=true, 
citecolor=teal, 
linkcolor=blue, 
urlcolor=magenta, 
pdfstartview={FitV},
unicode,
breaklinks=true,
}



\newtheorem{theorem}{Theorem}[section]
\newtheorem{lemma}[theorem]{Lemma}
\newtheorem{proposition}[theorem]{Proposition}

  \newtheorem{introthm}{Theorem}
  
  \renewcommand{\theintrothm}{\Alph{introthm}}

\newtheorem*{theorem*}{Theorem}
\newtheorem*{ques*}{Question}
\newtheorem*{prop*}{Proposition}

\theoremstyle{definition}
\newtheorem{definition}[theorem]{Definition}

\newtheorem{claim}[theorem]{Claim}

\newtheorem*{claim*}{Claim}

\newtheorem*{question*}{Question}
\newtheorem*{answer*}{Answer}
\newtheorem*{application*}{Application}

\theoremstyle{remark}
\newtheorem{remark}[theorem]{Remark}

\numberwithin{equation}{section}



  \newcommand{\calN}{\mathcal{N}}

  \newcommand{\calT}{\mathcal{T}}
  \newcommand{\calU}{\mathcal{U}}
    \newcommand{\calV}{\mathcal{V}}


  
  \newcommand{\NN}{\mathbb{N}}

  \newcommand{\RR}{\mathbb{R}}

  
  \newcommand{\bfa}{\textbf{a}}
  \newcommand{\bfb}{\textbf{b}}

  
  \newcommand{\gothic}{\mathfrak}

  \newcommand{\go}{{\gothic o}}


  \newcommand{\sD}{{\sf D}}

  \newcommand{\sQ}{{\sf Q}}   
  \newcommand{\sR}{{\sf R}}

  \newcommand{\mm}{{\sf m}}   
  \newcommand{\nn}{{\sf n}}

  \newcommand{\qq}{{\sf q}}   
  \newcommand{\rr}{{\sf r}}

\DeclareMathOperator{\diam}{diam}



  \newcommand{\eqnref}[1]{Equation~\eqref{#1}}

    
  \newcommand{\param}{{\mathchoice{\mkern1mu\mbox{\raise2.2pt\hbox{$
  \centerdot$}}
  \mkern1mu}{\mkern1mu\mbox{\raise2.2pt\hbox{$\centerdot$}}\mkern1mu}{
  \mkern1.5mu\centerdot\mkern1.5mu}{\mkern1.5mu\centerdot\mkern1.5mu}}}

\DeclarePairedDelimiterX{\norm}[1]{\lvert}{\rvert}{#1}
\DeclarePairedDelimiterX{\Norm}[1]{\lVert}{\rVert}{#1}

  \newcommand{\from}{\colon\thinspace}

  \newcommand{\pka}{{ \partial_{\kappa} }}

\newcommand{\CAT}{\ensuremath{\operatorname{CAT}(0)}\xspace}        
\begin{document}
\bibliographystyle{alpha}

\title[SBE and sublinearly Morse boundaries]{Sublinear biLipschitz equivalence and sublinearly Morse boundaries}




\author   {Gabriel Pallier}
\address{Karlsruhe Institute of Technology, 76131 Karlsruhe, Germany}
\email{gabriel@pallier.org}

\thanks{G.P. gratefully acknowledges funding by the DFG 281869850 (RTG 2229) and the ERC Starting Grant 713998 GeoMeG ‘Geometry of Metric Groups’.}

\author   {Yulan Qing}
\address{Shanghai Center for Mathematical Sciences, Fudan University, Shanghai, China}
\email{yulan.qing@gmail.com}


\begin{abstract}
A sublinear biLipschitz equivalence (SBE) between metric spaces is a map from one space to another that distorts distances with bounded multiplicative constants and sublinear additive error. Given any sublinear function $\kappa$, $\kappa$-Morse boundaries are defined for all geodesic proper metric spaces as a quasi-isometrically invariant and metrizable topological space of quasi-geodesic rays. In this paper, we prove that $\kappa$-Morse boundaries of proper geodesic metric spaces are invariant under suitable SBEs. 
A tool in the proof is the use of sublinear rays, that is, sublinear bilispchitz embeddings of the half line, generalizing quasi-geodesic rays.
As an application we distinguish a pair of right-angled Coxeter groups brought up by Behrstock up to sublinear biLipschitz equivalence.
We also show that under mild assumptions, generic random walks on countable groups are sublinear rays.
\end{abstract}
\maketitle

%

\section{Introduction}

Sublinear biLipschitz equivalence is an equivalence relation between metric spaces that naturally generalizes quasiisometries.  It appeared first without a name, and then more explicitly in the work of Cornulier on asymptotic cones of Lie groups \cite{Cor08,Cor11}, before being studied for its own sake \cite{cornulier2017sublinear,pallierHYPSBE}. 
Following Cornulier we will abbreviate this relation as SBE. Because Gromov hyperbolicity admits a characterization in terms of asymptotic cones, it follows that being hyperbolic  is an SBE-invariant property among compactly generated locally compact groups; this was noted by Cornulier \cite[Theorem 4.3]{cornulier2017sublinear}.

In this project we extend this result beyond groups and spaces that are Gromov hyperbolic.
Precisely, given a proper geodesic space that is not Gromov hyperbolic, but exhibiting some features of Gromov hyperbolic spaces, one can study its large scale hyperbolic-like structure by describing the \emph{sublinearly Morse boundaries} of the group \cite{QRT20}.
The latter is a topological space collecting a large set of quasi-geodesic rays behaving in a hyperbolic fashion; when the space is Gromov-hyperbolic, all of its sublinearly Morse boundaries are homeomorphic to its Gromov boundary. 
In this paper we show that the set of subblinearly Morse directions are invariant under suitable sublinear biLipschitz equivalence.
We then go further to obtain simultaneous generalizations of two distinct previous results in the literature : 
\begin{enumerate}
\item 
(See \S \ref{subsec:boundaries} below)
Qing, Rafi and Tiozzo's theorem that the homeomorphism type of the sublinearly Morse boundary is a quasi-isometry invariant among proper geodesic metric spaces \cite[Theorem A(2)]{QRT20}.
\item 
(See \S \ref{subsec:sbe-intro} below)
Cornulier's theorem that the {homeomorphism type of the} Gromov boundary is SBE invariant among Gromov-hyperbolic groups \cite{cornulier2017sublinear}.
\end{enumerate}
We recall some context on these two theorems.

\subsection{Boundaries}
\label{subsec:boundaries}

In his seminal article \cite{Gromov87HypGrp}, Gromov introduced the class of hyperbolic groups and attached to such groups an equivariant bordification, now called the Gromov boundary. The class of Gromov-hyperbolic group is closed under quasiisometry, and the quasiisometries extend equivariantly to the Gromov boundaries.

The class of Gromov hyperbolic groups and spaces is however not vast enough to include natural examples such as $\mathrm{CAT}(0)$ groups and mapping class groups of surfaces of finite type.

For $\mathrm{CAT}(0)$ groups and spaces, the visual boundary (the set of all geodesic rays emanating from a fixed base-point $(X, \go)$, up to fellow travel) does not provide a good large-scale invariant (as indicated by the works from Croke-Kleiner \cite{CK00} to Qing \cite{qing1}). In \cite{QRT19}, the second named author and Rafi consider the set of quasi-geodesic rays whose Morse property is weakened compared to that of geodesic rays in Gromov spaces. In particular, given a sublinear function $\kappa$, Qing-Rafi define a quasi-geodesic ray $\gamma$ to be \emph{sublinearly $\kappa$-Morse} if any other geodesic segment with endpoints on $\gamma$ is uniformly $\kappa$-close to $\gamma$, i.e. their distances to $\gamma$ is bounded above by $n(\qq, \sQ) \kappa(\Norm x)$, where $n(\qq, \sQ)$ is a constant depends only on the quasi-geodesic constants $(\qq, \sQ)$ of the segment, and the distance of each point on the segment to the origin, $\Norm x = d(\go, x)$. The collection of all such quasi-geodesic rays, together with a coarse cone topology, is referred to as the $\kappa$-boundary of $X$, and denoted $\pka X$. These boundaries are shown to be quasi-isometrically invariant  topological spaces attached to all proper geodesic spaces \cite{QRT20}. Therefore one can denote a $\kappa$-boundary of a group with $\pka G$. Furthermore, they are metrizable topological spaces (\cite{QRT20}). Since their introduction, sublinearly Morse boundaries are studied and compared to Gromov boundaries in various ways, such as via visibility, divergence and contracting properties (See\cite{MQZ21}, \cite{IZ} and \cite{Zal21} ).

One important application of the sublinear boundaries is that, for appropriately chosen $\kappa$, $\pka G$ is a topological model for the Poisson boundaries of simple random walks on various groups, such as right-angled Artin groups \cite{QRT19}, mapping class groups and relative hyperbolic groups \cite{QRT20}, hierarchically hyperbolic groups \cite{NQ22},  CAT(0) groups \cite{GQR22} and Teichm\"uller spaces \cite{GQR22}. The sublinearly Morse directions are also shown to be generic in Patterson Sullivan measure under suitable conditions \cite{GQR22}. Most recently, Choi \cite{Choi} claim this result to hold for all groups with two independent isometries with contracting axes.

\subsection{Sublinear biLipschitz equivalence}
\label{subsec:sbe-intro}
Sublinear biLipschitz equivalence (SBE) appeared in works of Cornulier, where it was motivated by the quasiisometry classification of connected Lie groups. 
Cornulier noted that while the quasiisometry classification of all such groups reduces to that of closed subgroups of real upper triangular matrices, the sublinear biLipschitz equivalence classification reduces to that of a smaller class, and that it was completely treated by the literature in the nilpotent case \cite{Cor11}. In \cite{cornulier2017sublinear} Cornulier asked about the SBE classification for other classes of groups, especially the word-hyperbolic groups.

As mentioned above, the first SBE invariant is the asymptotic cone; it is also the most natural one, since SBE may be defined as the largest class of maps inducing biLipschitz homeomorphisms between asymptotic cones with fixed base-points \cite{Cor11}.
A geodesic metric space $X$ is Gromov-hyperbolic if and only if all its asymptotic cones are real trees, for every choice of sequence of base-points \cite[2.A]{AsInv} \cite[Proposition 3.A.4]{Drutu}.  It follows that Gromov-hyperbolicity is an SBE-invariant among compactly generated locally compact groups, and especially among finitely generated groups)\cite[Theorem 4.3]{cornulier2017sublinear}.
Cornulier proved that SBEs between Gromov-hyperbolic groups induce biH\"older homeomorphisms between their Gromov boundaries; this was slightly improved by the first named author showing that a sublinear conformal structure in the Gromov boundary is actually preserved \cite{cornulier2017sublinear,pallierHYPSBE}. In this paper we extend the topological invairance of the Gromov boundary to that of a family of topological spaces that can be attached to any proper geodesic metric spaces:

\begin{introthm}(Theorem~\ref{invarianttopology})\label{th:invariant-topology-intro}
%
Let $L \geqslant 1$, and let $\theta$ be a concave, nondecreasing and stictly sublinear function. Consider proper geodesic metric spaces $X$ and $Y$, let $\Phi \from X \to Y$ be a 
$(L, \theta)$-sublinear bi-Lipschitz equivalence between $X$ and $Y$.  Then, for every concave, nondecreasing and strictly sublinear function $\kappa$ that dominates $\theta$,  there is a homeomorphism $\Phi_\star \from \partial_\kappa X \to \partial_\kappa Y$,  induced by $\Phi$. 

\end{introthm}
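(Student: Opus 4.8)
The plan is to build $\Phi_\star$ directly on the level of boundary rays, exploiting that an SBE, although it does \emph{not} send quasi-geodesics to quasi-geodesics, sends them to \emph{sublinear rays} (sublinear biLipschitz embeddings of $[0,\infty)$), and that $\kappa$-Morse sublinear rays define the same boundary as $\kappa$-Morse quasi-geodesic rays, via sublinear fellow-travelling. Throughout I would use that an $(L,\theta)$-SBE $\Phi$ admits an SBE quasi-inverse $\Psi \from Y \to X$, with $\Psi \circ \Phi$ and $\Phi \circ \Psi$ at bounded sublinear distance from the respective identities; composing the defining inequalities shows $\Psi$ is an $(L',\theta')$-SBE with $\theta'$ again strictly sublinear and, by concavity of $\theta$, still dominated by $\kappa$.

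First I would record how $\Phi$ acts on rays. If $\gamma \from [0,\infty) \to X$ is a $\kappa$-Morse quasi-geodesic ray based at $\go$, then for parameters $s,t$ the SBE inequalities together with $\Norm{\gamma(t)} \asymp t$ give
$$\tfrac{1}{L}\lvert s-t\rvert - \theta(C\max(s,t)) \leqslant d(\Phi\gamma(s),\Phi\gamma(t)) \leqslant L\lvert s-t\rvert + \theta(C\max(s,t)),$$
so $\Phi\circ\gamma$ is a sublinear ray whose additive defect is governed by $\theta$, hence by $\kappa$. The central step is to show that $\Phi\circ\gamma$ is again $\kappa$-Morse. For this I would take an arbitrary quasi-geodesic segment $\beta$ in $Y$ with endpoints on $\Phi\gamma$, pull it back by $\Psi$ to a sublinear quasi-geodesic $\Psi\circ\beta$ in $X$ with endpoints sublinearly close to $\gamma$, invoke the (sublinear) Morse property of $\gamma$ to confine $\Psi\circ\beta$ to a $\kappa$-neighborhood of $\gamma$, and push the resulting bound forward by $\Phi$. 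Because every error introduced is at most a fixed multiple of $\theta$ evaluated at a comparable radius, and $\kappa$ dominates $\theta$, all such errors are absorbed into the $\kappa$-budget, so $\beta$ lies in a $\kappa$-neighborhood of $\Phi\circ\gamma$. Using the identification of $\kappa$-Morse sublinear rays with $\kappa$-Morse quasi-geodesic rays up to sublinear fellow-travelling, this lets me \emph{define} $\Phi_\star[\gamma]$ as the class of the quasi-geodesic ray sublinearly fellow-travelled by $\Phi\circ\gamma$.

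It then remains to verify that $\Phi_\star$ is a well-defined bijection and a homeomorphism. Well-definedness amounts to checking that $\Phi$ preserves the equivalence relation: if $\gamma_1,\gamma_2$ sublinearly fellow-travel, then $d(\Phi\gamma_1(t),\Phi\gamma_2(t)) \leqslant L\cdot o(t) + \theta(Ct) = o(t)$, so their images do too. Bijectivity follows by applying the same construction to $\Psi$ (symmetric, with $\Phi$ now serving as the quasi-inverse): the composite $\Psi_\star\circ\Phi_\star$ is induced by $\Psi\circ\Phi$, which displaces every point sublinearly and therefore fixes each boundary class, giving $\Psi_\star\circ\Phi_\star = \mathrm{id}$ and symmetrically $\Phi_\star\circ\Psi_\star = \mathrm{id}$. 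For continuity I would unwind the coarse cone topology: a basic neighborhood of $[\gamma]$ consists of classes whose representatives track $\gamma$ within an error of size $\kappa$ out to some radius $r$. Since $\Phi$ distorts distances and radii by multiplicative constants and an additive $\theta \leqslant \kappa$ term, the $\Phi$-image of such a neighborhood contains a basic neighborhood of $\Phi_\star[\gamma]$ after adjusting $r$ and the constant, which is exactly continuity; applying the same to $\Psi$ shows $\Phi_\star^{-1} = \Psi_\star$ is continuous, so $\Phi_\star$ is a homeomorphism.

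The main obstacle is precisely the failure of SBEs to preserve quasi-geodesics, which forces the detour through sublinear rays and, in the central step, through sublinear quasi-geodesics; the delicate point is to formulate and apply the $\kappa$-Morse property for these sublinear objects and to check that pulling back by $\Psi$, confining by the Morse property, and pushing forward by $\Phi$ never accumulates more than a $\kappa$-sized error. This is where the hypothesis that $\kappa$ dominates $\theta$ is essential, and it must be used quantitatively — through the concavity and monotonicity of both functions at comparable radii — rather than merely as an asymptotic statement.
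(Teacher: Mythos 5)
Your overall architecture coincides with the paper's: construct an SBE quasi-inverse (Proposition~\ref{prop:inverses-SBE}), show that $\Phi$ sends $\kappa$-Morse quasi-geodesic rays to $\kappa$-Morse sublinear rays (Lemma~\ref{lemma1}, Proposition~\ref{imageMorse}), identify each $\kappa$-Morse sublinear ray with a unique boundary class via a tracking geodesic (Lemma~\ref{uniquegeodesic}, Proposition~\ref{Morseray}), then get bijectivity from the quasi-inverse and continuity by unwinding the cone topology (Theorem~\ref{invarianttopology}). However, your central step has a genuine gap. You propose to take a quasi-geodesic $\beta$ in $Y$ testing $\Phi\gamma$, pull it back to $\Psi \circ \beta$, and then ``invoke the $\kappa$-Morse property of $\gamma$ to confine $\Psi\circ\beta$ to a $\kappa$-neighborhood of $\gamma$.'' This does not follow from anything available: $\Psi\circ\beta$ is only a sublinear ray, not a quasi-geodesic, and the $\kappa$-Morse property (Definitions~\ref{defn:Morse} and~\ref{defn2:Morse}) constrains exclusively quasi-geodesic test objects. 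Extending the Morse property so that it also confines sublinear rays is a substantive theorem in its own right, not a bookkeeping matter; you flag it as ``the delicate point'' but give no argument, and the hypothesis that $\kappa$ dominates $\theta$ does not by itself produce one.

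The paper's proof is engineered precisely to avoid ever confining a non-quasi-geodesic. In Proposition~\ref{imageMorse}, after pulling back the test quasi-geodesic $\gamma$ to $\overline{\Phi}\gamma$, the paper does not confine $\overline{\Phi}\gamma$ by the Morse property of $\alpha$; it reverses the roles. First, the set $\overline{\Phi}\gamma$ is itself shown to be $\kappa$-Morse: any quasi-geodesic sublinearly tracking $\overline{\Phi}\gamma$ also sublinearly tracks $\alpha$, hence is confined by the Morse property of $\alpha$ --- the confined object here is an honest quasi-geodesic (this is the mechanism of Proposition~\ref{closeisMorse} and Lemma~\ref{lem:kappa-Morse-transfers-to-fellow-travelling}). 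Second, the Morse property of the set $\overline{\Phi}\gamma$ is applied to $\alpha$ itself, which is a quasi-geodesic sublinearly tracking $\overline{\Phi}\gamma$; this yields that $\alpha$ and $\overline{\Phi}\gamma$ are $\kappa$-close, and only then is the estimate pushed forward by $\Phi$ to bound the distance between $\gamma$ and $\Phi\alpha$. In every invocation of a Morse property the test object is a genuine quasi-geodesic. To repair your proof you must either adopt this role-reversal, or first state and prove the strengthened claim that $\kappa$-Morse sets uniformly confine $(L,\theta)$-rays when $\kappa$ dominates $\theta$ --- which is exactly the extra theorem your sketch implicitly assumes and the paper never needs.
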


In the statement when we say that $\kappa$ {dominates} $\theta$ we mean that there exists constants $C_1, C_2$ and a nonnegative real number $t_0$ such that for all $t>t_0$, $\kappa(t) \geq C_1 \theta(t) + C_2$. For the complete definitions of the $\kappa$-Morse boundary $\partial_\kappa$ and of a $(L,\theta)$-sublinear biLipschitz equivalence we refer to Subsection \ref{subsec:dfn-pka-boundaries} and Definition \ref{defn:theta-SBE} respectively.
When $\theta= 1$, $\Phi$ is a quasiisometry and Theorem~\ref{th:invariant-topology-intro} is exactly \cite[Theorem A(2)]{QRT20}; when $X$ and $Y$ are Gromov hyperbolic, Theorem~\ref{th:invariant-topology-intro} is a consequence of Theorem 1.7 in \cite{cornulier2017sublinear}.

\subsection*{Application} As an application, we use Theorem \ref{th:invariant-topology-intro} to distinguish two non-relatively hyperbolic right-angled Coxeter groups up to SBE in Section \ref{sectionboundary}. This pair was presented by Behrstock in \cite{Be19}. 
Determining whether these two groups have bilipschitz homeomorphic asymptotic cones or not is an open question \cite{BeP}.
On the other hand, we check in \ref{subsec:application-of-thA} that their sublinearly Morse boundaries have different topological dimensions, which allows us to conclude from Theorem \ref{th:invariant-topology-intro} that they cannot be sublinear biLipschitz equivalent.

The proof of Theorem~\ref{invarianttopology} makes use of sublinear rays, introduced in \cite{pallierHYPSBE} (there called $(\lambda, O(v))$-rays). These are images of the half-line under a sublinear bi-Lipschitz embedding. As such, they are analogues of quasi-geodesics, but with an additive constant of quasigedoesicity that grows sublinearly. If the growth if logarithmic, we call the resulting sublinear ray a logarithmic ray.
Sublinear rays also play an important role in the statement of our second result.

\subsection{Random walks}
Similar to the development of sublinearly Morse boundaries, an important motivation behind this project stems from simple random walk on finitely generated groups. In \cite{Ti15}, Tiozzo show that given a surface $S$, in the Teichm\"uller space $\calT(S)$, a generic random walk tracks a geodesic sublinearly. 
In Section~\ref{sec:random-walks} of this paper, we show that this implies that a generic random walk is a sublinear ray.

\begin{introthm}[Theorem~\ref{randomwalkisthetaray}]\label{th:randomwalkintro}
Let $G$ be the mapping class group $\operatorname{Mod}(S)$ of a finite type surface, or let $G$ be a finitely generated relatively hyperbolic group. 
Let $\mu$ be a probability measure on $G$ with finite first moment with respect to any word metric on $G$, such that the subsemigroup generated by the support of $\mu$ is a non-amenable group. Then almost every sample path of the random walk associated to $\mu$ is a logarithmic ray.
\end{introthm}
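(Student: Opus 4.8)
The plan is to realize the sample path as the image of an explicit sublinear biLipschitz embedding of the half--line and to check the two defining inequalities separately, the upper one being essentially free and the lower one carrying all the content. Write $w_n = g_1\cdots g_n$ for the position at time $n$, and let $\sigma \from [0,\infty)\to G$ be the concatenation of Cayley--graph geodesics $[w_k,w_{k+1}]$ parametrized by arc length, so that $\sigma(T_n)=w_n$ with $T_n=\sum_{k\le n}|g_k|$. By construction $\sigma$ is $1$--Lipschitz, so the upper bound $d(\sigma(s),\sigma(t))\le |s-t|$ holds with no additive error; this is exactly what makes arc length (rather than time $n$) the right parametrization, since under a finite first moment a single increment $|g_k|$ may be a sublinear multiple of $k$ and would otherwise wreck any comparison of $d(w_k,w_{k+1})$ with $|k-(k+1)|$. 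Everything then reduces to the lower bound $d(\sigma(s),\sigma(t))\ge \tfrac1L|s-t|-\theta(\max(s,t))$, that is, to bounding the backtracking of $\sigma$ by $\theta$.

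First I would record the large--scale inputs. Kingman's subadditive ergodic theorem gives $d(\go,w_n)/n\to\ell$ almost surely; the assumption that the subsemigroup generated by $\mathrm{supp}(\mu)$ is a non-amenable group, together with the resulting non-elementary action of $G$ on the relevant hyperbolic--like space (the curve complex or Teichm\"uller space for $\operatorname{Mod}(S)$, the coned--off graph for a relatively hyperbolic $G$), forces $\ell>0$. By the same non-elementarity almost every path converges to a boundary point $\xi$ and, crucially, tracks a geodesic ray $\gamma$ to $\xi$ sublinearly: $d(w_n,\gamma)=o(d(\go,w_n))=o(n)$. For $\operatorname{Mod}(S)$ this is Tiozzo's theorem in $\calT(S)$ quoted above; for relatively hyperbolic $G$ it is the analogous convergence--and--tracking statement. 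I would also use that $\gamma$ is almost surely $\kappa$--Morse for the relevant $\kappa$, so that its sublinear contraction is available.

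The lower bound is then obtained by projecting to $\gamma$. Writing $\pi(\sigma(s))=\gamma(p(s))$ for a nearest--point projection, sublinear tracking gives $d(\sigma(s),\gamma(p(s)))\le u(s)$ with $u$ sublinear, and the sublinear contraction property of $\gamma$ controls $\pi$, yielding
\[
 d(\sigma(s),\sigma(t))\ \ge\ |p(s)-p(t)|-u(s)-u(t)-o(\max(s,t)).
\]
It remains to see that the projected progress is linear in arc length, $p(t)-p(s)\ge \tfrac1L(t-s)-o(\cdot)$; comparing along the sampled times the net displacement $d(w_m,w_n)\overset{d}{=}d(\go,w_{n-m})\approx \ell(n-m)$ with the consumed arc length $T_n-T_m\approx \EE|g_1|\,(n-m)$ shows that $L=\EE|g_1|/\ell$ works up to the fluctuations of these two sums. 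Assembling the pieces exhibits $\sigma$ as a sublinear biLipschitz embedding whose additive error $\theta$ is of the order of the worst tracking--plus--fluctuation error over the window $[s,t]$, which already proves that the sample path is a sublinear ray.

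The main obstacle is the \emph{rate}: upgrading this $\theta$ from a general sublinear function to a logarithmic one, as the statement demands. Both error sources must be controlled at logarithmic scale uniformly over the $\sim N^2$ pairs of times up to $N$: the tracking error $u$, and the fluctuations of the displacement and arc--length sums about their means. The natural mechanism is a deviation inequality for the walk on these groups, of the form $P\bigl(d(w_n,\gamma)\ge C\log n\bigr)$ and $P\bigl(|d(\go,w_n)-\ell n|\ge C\log n\bigr)$ summable in $n$, followed by Borel--Cantelli, so that almost surely the error is $O(\log)$ for all but finitely many scales, the exceptions being absorbed into the constants. This is precisely where the hypotheses bite hardest, and where I expect the real work to lie: a finite first moment by itself yields only $\max_{k\le n}|g_k|=o(n)$ and sublinear tracking, so the logarithmic refinement must draw on the finer structure of the action (acylindricity for $\operatorname{Mod}(S)$, the peripheral/relatively hyperbolic structure for $G$) to produce exponential--type decay of backtracking while only integrating individual increments once. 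Establishing that such deviation estimates hold at logarithmic scale under the stated moment assumption, and that they transfer from the auxiliary hyperbolic--like space back to the word metric on $G$, is the crux of the argument.
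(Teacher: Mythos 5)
Your proposal, even granting all of its details, establishes only that the sample path is a \emph{sublinear} ray --- that is, essentially Theorem~\ref{th:randomwalkintro2} --- and then explicitly defers the logarithmic rate, which is the entire content of Theorem~\ref{th:randomwalkintro}, to deviation estimates that you neither prove nor cite. That is the genuine gap, and it is exactly where the paper's proof does something different: it does not attempt to derive the rate from the hypotheses at all, but imports it wholesale by citing \cite[Theorem C]{QRT20}, which asserts that almost surely $\limsup_n d(w_n,\gamma_\omega)/\log n<\infty$ for these two classes of groups. Combining this logarithmic tracking with linear progress (positive drift from \cite{MaherTiozzo}, transported to the word metric by the distance formula of \cite{MM00}, resp.\ \cite{sistopaper} in the relatively hyperbolic case), the paper places $w_n$ within $O(\log n)$ of the point $\gamma_\omega(An)$ and obtains both ray inequalities for the time-parametrized path $n\mapsto w_n$ from the triangle inequality. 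The idea missing from your write-up is simply that logarithmic tracking is an available black box, not an open crux to be reproved.

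Moreover, the route you sketch for closing the gap would fail as stated. First, the deviation inequality $\PP\bigl(|d(\go,w_n)-\ell n|\ge C\log n\bigr)$ summable cannot hold in any nondegenerate case: already for finitely supported $\mu$, the displacement on these groups satisfies a central limit theorem, so $d(\go,w_n)-\ell n$ fluctuates at scale $\sqrt n$ and the probability you need to be summable in fact tends to one. You are forced into this impossible requirement by insisting on the sharp constant $L=\EE\norm{g_1}/\ell$; allowing multiplicative slack reduces the requirement to window-uniform bounds of the form $\epsilon|n-m|+C\log n$, which do hold --- but only under exponential moment conditions, and again by quotation rather than derivation. Second, your arc-length parametrization ties the ray parameter to the clock $T_n=\sum_{k\le n}\norm{g_k}$, which under a bare first-moment hypothesis is fatally noisy: for suitable heavy-tailed $\mu$ (say $\nu(a^{\pm k})\propto k^{-5/2}$ mixed with a finitely supported measure), consecutive cancelling increments $g_n=g_{n-1}^{-1}$ with $\norm{g_n}\ge n^{1/5}$ occur infinitely often almost surely, so that $\sigma(T_{n-2})=\sigma(T_n)$ while $T_n-T_{n-2}\ge 2n^{1/5}$; no tracking or contraction estimate can repair this, because the failure is internal to the parametrization. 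The paper's correspondence $w_i\leftrightarrow\gamma_\omega(Ai)$, indexed by time rather than by arc length, channels all of the randomness into the tracking distance, which is precisely the quantity the cited theorem controls. Your closing instinct --- that log-scale estimates are ``the crux'' and must come from finer structure than a first moment --- is half right: they genuinely cannot be extracted from the stated moment hypothesis (they are false in that generality), and the paper's proof closes only because it invokes them as theorems under the hypotheses where they have been established.
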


In fact, a version of this result with a weaker conclusion holds for a larger class of group actions: we combine the proof of Theorem~\ref{randomwalkisthetaray} together Theorem 6 in \cite{Ti15} to obtain the following.

\setcounter{introthm}{1}
\renewcommand{\theintrothm}{\Alph{introthm}'}

\begin{introthm}[Theorem~\ref{countablegroups}]\label{th:randomwalkintro2}
 Let $G$ be a countable group acting via isometries on a proper, geodesic, metric space $(X,d)$ with a non-trivial, stably visible compactification. Let $\go$ denote a basepoint in $X$. Let $\mu$ be a probability measure on G with finite first moment with respect to $d$, such that the subsemigroup generated by the support of $\mu$ is a non-amenable group. Then for almost every sample path $(w_n)$ of the random walk associated to $\mu$, $(w_n \cdot \go)$ is a sublinear ray.
\end{introthm}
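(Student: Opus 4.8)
The plan is to derive the conclusion by combining the sublinear tracking of geodesics from Theorem~6 of \cite{Ti15} with an almost sure sublinear bound on the individual increments of the walk, reusing the mechanism from the proof of Theorem~\ref{randomwalkisthetaray} that turns tracking into a ray. Write $w_n = g_1 g_2 \cdots g_n$, where the increments $g_i$ are independent and $\mu$-distributed, and put $Y_n = d(\go, g_n\go)$; since $w_{n-1}$ acts by isometries, $d(w_{n-1}\go, w_n\go) = Y_n$, so the $Y_n$ are i.i.d.\ with $\EE[Y_1] < \infty$ by the finite first moment hypothesis. First I would control the increments: for every $\varepsilon > 0$,
$$\sum_{n\ge1}\PP(Y_n > \varepsilon n) = \sum_{n\ge1}\PP(Y_1 > \varepsilon n) \le \tfrac1\varepsilon\,\EE[Y_1] < \infty,$$
so the Borel--Cantelli lemma gives $Y_n/n \to 0$ almost surely, and consequently the envelope $R(t) := \max_{k\le t} Y_k$ also satisfies $R(t)/t \to 0$ almost surely.

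Next I would feed in the hypotheses on the compactification. Since the compactification of $X$ is non-trivial and stably visible and the subsemigroup generated by the support of $\mu$ is a non-amenable group, Theorem~6 of \cite{Ti15} applies: for almost every sample path, $(w_n\go)$ converges to a boundary point $\xi$, the drift $\ell := \lim_n d(\go, w_n\go)/n$ is positive, and $(w_n\go)$ sublinearly tracks a geodesic ray $\gamma$ from $\go$ to $\xi$, meaning $\rho(n) := d(w_n\go, \gamma(\ell n)) = o(n)$. The only difference with the situation of Theorem~\ref{randomwalkisthetaray} is that here the tracking function $\rho$ is merely sublinear rather than logarithmic, which is exactly why the conclusion is a sublinear, rather than logarithmic, ray.

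Finally I would assemble these estimates. Fix a sample path in the full-measure set where both steps hold, and define $\phi \from [0,\infty) \to X$ by $\phi(n) = w_n\go$ together with geodesic interpolation on each $[n,n+1]$. For integers $m \le n$ the triangle inequality, the tracking estimate, and $d(\gamma(\ell m), \gamma(\ell n)) = \ell(n-m)$ give
$$\big| d(w_m\go, w_n\go) - \ell(n-m)\big| \le \rho(m) + \rho(n).$$
Replacing vertices by arbitrary $s,t \in [0,\infty)$ alters $d(\phi(s),\phi(t))$ by at most $R(s+1) + R(t+1)$ and the index difference by at most $1$. Taking $\theta$ to be a concave, nondecreasing, strictly sublinear majorant of $t \mapsto 2\rho(t) + 2R(t+1) + \ell + 1$ (which exists after passing to nondecreasing envelopes, since each summand is sublinear) and $L = \max(\ell, 1/\ell) < \infty$, one obtains
$$\tfrac1L|s-t| - \theta(\max(s,t)) \le d(\phi(s),\phi(t)) \le L|s-t| + \theta(\max(s,t)),$$
so $\phi$ is an $(L,\theta)$-sublinear bi-Lipschitz embedding of the half-line; that is, $(w_n\go)$ is a sublinear ray. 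Note that the parameters $\ell, \rho, R, \theta, L$ may depend on the sample path, which is harmless for an almost sure statement.

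I expect the main obstacle to lie not in the assembly but in correctly extracting the three ingredients from Theorem~6 of \cite{Ti15}: convergence to the boundary, sublinear tracking, and in particular the positivity of the drift $\ell$, without which the lower bound above is vacuous and $L$ is infinite. Positivity of $\ell$ is where non-amenability is essential, and one must check that the ``stably visible'' hypothesis is used exactly as in \cite{Ti15}, so that convergence in the compactification genuinely upgrades to sublinear fellow-travelling of the geodesic $\gamma$. The remaining delicate point is that tracking is only an asymptotic ($o(n)$) statement and says nothing about nearby indices or about the interior of the interpolation intervals; this gap is precisely what the Borel--Cantelli control of $R$ fills, and the merging of these two estimates of different nature into a single concave sublinear modulus $\theta$ is the part requiring care.
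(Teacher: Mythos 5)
Your proposal is correct and follows essentially the paper's own route: the paper's proof of this statement consists precisely of invoking Theorem 6 of \cite{Ti15} (boundary convergence, positive drift, sublinear tracking) and rerunning the assembly mechanism of Theorem~\ref{randomwalkisthetaray}, which is exactly what you do with the interpolated orbit map, including correctly flagging positivity of the drift as the point that must be extracted from \cite{Ti15}. The only cosmetic difference is your Borel--Cantelli control of the increments, which is in fact redundant: the bound $d(w_n\go, w_{n+1}\go) \leq \ell + \rho(n) + \rho(n+1)$ already follows from the tracking and drift estimates at integer times.
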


While Theorem~\ref{randomwalkisthetaray} is logically independent from Theorem~\ref{invarianttopology}, both theorems promote the use of sublinear rays. Whereas a generic random walk is not a quasi-geodesic, it is a sublinear ray, and many of the geometric techniques devised for quasi-geodesics can be employed to treat sublinear rays as if they were quasi-geodesics.

\subsection{Organization of the paper}
Section \ref{sec:prelims} collects preliminary information; especially we recall the relevant definitions and facts concerning sublinear biLipschitz equivalence, rays, and the sublinearly Morse boundaries. 
Beware that we chose to adopt the notation from \cite{QRT20} so that the notation for sublinear biLipschitz equivalence is not the usual one.
Section \ref{sec:SBE-Morse-ray-to-Morse-ray} is a preparation for Section \ref{sectionboundary}, which is itself devoted to the proof of Theorem~\ref{invarianttopology}.
Section \ref{sec:random-walks} deals with Theorem \ref{th:randomwalkintro} and Theorem \ref{th:randomwalkintro2}. It only builds on the preliminaries and can be  read without Sections \ref{sec:SBE-Morse-ray-to-Morse-ray} and \ref{sectionboundary}.

\section{Preliminaries}
\label{sec:prelims}

\subsection{Notation and convention for the sublinear functions}
\label{subsec:notation}
Throughout this paper, let $X$ and $Y$ denote pointed proper geodesic metric spaces.
The base-points in both spaces are denoted $\mathfrak o$.
The distance to the base-point is denoted $\ \Vert x \Vert = d(x, \mathfrak o)$ for all $x \in X$ or $x \in Y$. Let $\kappa$ be a concave {nondecreasing} and strictly sublinear function. The last condition means that $\kappa(r) /r$ goes to $0$ as $r $ tends to $+\infty$. {We also assume $\kappa \geqslant 1$}.

By $\kappa(x)$ for $x \in X$ or $Y$ we mean $\kappa(\Vert x \Vert)$.
Let $Z \subseteq X$ be a closed subspace of $X$ and $\sD >0$, then
$\mathcal N_\kappa(Z, \sD)$ will denote $\{ x\in X : d(x,Z) \leqslant \sD \kappa(x) \}$. 
We say that $\sD$ is {\em small with respect to $r$}, and write $\sD \ll r$, if $\sD \leqslant r / (2 \kappa(r))$.

\subsection{Sublinear estimates}

Here is a basic sublinear estimate that we need:

\begin{lemma}[Sublinear Estimation Lemma]

\label{Lem:sublinear-estimate}
For any $\sD_0>0$, there exists  $\sD_{1}, \sD_{2} > 0$ depending on $\sD_0$
and $\kappa$ so that, for $x, y \in X$,
\[
d(x, y) \leq \sD_{0} \cdot \kappa(x)
\Longrightarrow
\sD_{1} \kappa(x) \leq \kappa(y) \leq \sD_{2} \kappa(x).
\]
\end{lemma}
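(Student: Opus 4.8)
The plan is to reduce everything to the behaviour of $\kappa$ on the real half-line. Applying the triangle inequality to the hypothesis $d(x,y) \leq \sD_0\,\kappa(x)$ turns it into the two-sided estimate $\lvert \Norm{x} - \Norm{y}\rvert \leq \sD_0\,\kappa(\Norm{x})$. Writing $r = \Norm{x}$ and $s = \Norm{y}$, this says $s \in [\,r - \sD_0\kappa(r),\, r + \sD_0\kappa(r)\,]$, so by monotonicity of $\kappa$ it suffices to compare $\kappa$ at the two endpoints of this interval with $\kappa(r)$. The proof then exploits exactly the three structural properties in the standing hypotheses: monotonicity, concavity (with $\kappa(0)\geq 0$), and strict sublinearity, supplemented by the normalization $\kappa \geq 1$.

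First I would record two elementary consequences of concavity together with $\kappa(0)\geq 0$: the scaling inequalities $\kappa(\lambda t) \leq \lambda\,\kappa(t)$ for $\lambda \geq 1$ and $\kappa(\mu t) \geq \mu\,\kappa(t)$ for $0 \leq \mu \leq 1$. Both follow from comparing the chord of the graph through $0$ with the graph itself.

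Next I would split into a large-scale regime and a bounded regime. Using strict sublinearity, I choose a threshold $R$ so that for all $r \geq R$ one has both $\kappa(r) \leq r$ and $\sD_0\kappa(r) \leq r/2$. For $r \geq R$ the upper bound comes from the chain $\kappa(s) \leq \kappa(r + \sD_0\kappa(r)) \leq \kappa((1+\sD_0)r) \leq (1+\sD_0)\kappa(r)$, combining monotonicity, the bound $\kappa(r)\leq r$, and the scaling inequality with $\lambda = 1+\sD_0$. The lower bound comes from $\kappa(s) \geq \kappa(r - \sD_0\kappa(r)) \geq \kappa(r/2) \geq \tfrac12\kappa(r)$, where $r - \sD_0\kappa(r) \geq r/2$ holds in this regime and the last step is the scaling inequality with $\mu = 1/2$. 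This gives $\sD_1 = 1/2$ and $\sD_2 = 1+\sD_0$ whenever $r \geq R$.

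For $0 \leq r \leq R$ the asymptotic input is unavailable, and this is the only place needing care: here I would use $\kappa \geq 1$ together with monotonicity. All relevant $s$ lie in $[0, R']$ with $R' := R + \sD_0\kappa(R)$, so $\kappa(s) \leq \kappa(R')$ and $\kappa(s) \geq 1$, while $1 \leq \kappa(r) \leq \kappa(R)$; hence the ratio $\kappa(s)/\kappa(r)$ is pinched between $1/\kappa(R)$ and $\kappa(R')$. Taking $\sD_1 = \min\{1/2,\, 1/\kappa(R)\}$ and $\sD_2 = \max\{1+\sD_0,\, \kappa(R')\}$ completes the argument. The main (mild) obstacle is not any single step but ensuring the constants are genuinely uniform across both regimes: the scaling inequalities control the large-scale regime, while finiteness of $[0,R']$ together with the normalization $\kappa \geq 1$ handles the bounded regime where sublinearity gives no information.
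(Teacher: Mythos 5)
Your proof is correct and takes essentially the same route as the paper's: both reduce to the half-line via the triangle inequality, use strict sublinearity to fix a threshold $R$ splitting a large-scale regime from a bounded one, control the large-scale regime by monotonicity and concavity, and handle the bounded regime with the normalization $\kappa \geq 1$. The only real difference is that the paper expresses its constants as $\sup_{r \geqslant R} \kappa(3r/2)/\kappa(r)$ and $\inf_{r \geqslant R} \kappa(r/2)/\kappa(r)$, leaving their finiteness and positivity (which rest on concavity) implicit, whereas you derive the chord inequalities $\kappa(\lambda t) \leqslant \lambda \kappa(t)$ and $\kappa(\mu t) \geqslant \mu \kappa(t)$ explicitly and thereby obtain cleaner explicit constants.
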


\begin{proof}
Since $\kappa$ is sublinear, there is $R > 0$ such that $\kappa(x) \leqslant \frac{1}{2\sD_0} \Vert x \Vert$ as soon as $\Vert x \Vert \geqslant R$.
And then $d(x,y) \leqslant \sD_0 \kappa(x)$ implies that $\Vert y \Vert \leqslant 3 \Vert x \Vert/2$ by the triangle inequality, so that, in all cases,
\[ \kappa(y) \leqslant \left[ \sup_{r \geqslant R} \frac{\kappa(3r/2)}{\kappa(r)} + \kappa(3R/2) \right] \kappa(x) \]
where we used that $\kappa(x) \geqslant 1$. 
We may define $\sD_2 = \sup_{r \geqslant R} \frac{\kappa(3r/2)}{\kappa(r)} + \kappa(3R/2)$.
On the other hand, if $\Vert x \Vert \geqslant R$ then $\Vert y \Vert \geqslant \Vert x \Vert /2$ so that
\[ \kappa(y) \geqslant \left[ \inf_{r \geqslant R} \frac{\kappa(r/2)}{\kappa(r)} \right] \kappa(x) \]
Setting $\sD_1 = \min( \inf_{r \geqslant R} \frac{\kappa(r/2)}{\kappa(r)} , 1/\kappa(R))$ finishes the proof.
\end{proof}

\subsection{Quasi-geodesics and $\theta$-rays}

Here, $\theta$ is a function with the same properties as $\kappa$, that were specified in \S\ref{subsec:notation}.

\begin{definition}[Sublinear ray]\label{dfn:theta-ray}
Let $X$ be a proper pointed geodesic metric space. Let $L \geqslant 1$ be a constant.
Say that $\gamma : [0,+\infty) \to X$ with $\gamma(0)=\mathfrak o$ is a $(L,\theta)$-ray, or a $\theta$-ray for short, if for every $s,t \in [0, + \infty)$
\begin{equation}
    \frac{1}{L}\vert s - t \vert - \theta(\max (s,t)) \leqslant d(\gamma(s), \gamma(t)) \leqslant L \vert s - t \vert + \theta(\max (s,t)).
    \label{eq:kappa-ray}
\end{equation} 
\end{definition}

If we do not want to specify $\theta$, we will just say that $\gamma$ as in the definition above is a sublinear ray.
Beware that in \cite{pallierHYPSBE} we did not ask $\gamma(0)=\mathfrak o$ in the definition of a sublinear ray but we do it here. The difference is not a serious one, as one may simply advance the function $\theta$ (i.e. replace $\theta$ with $\theta (\mathsf D + \cdot)$) to accommodate for the change.

\begin{remark}[compare {\cite[Lemma 3.2]{pallierHYPSBE}}]
\label{rem:linear-control}
If $\gamma$ is a sublinear ray with large-scale Lipschitz constant $L$ and sublinear function $\theta$, then for $s$ large enough $\frac{1}{2L} \vert s \vert \leqslant \Vert \gamma(s) \Vert \leqslant 2L \vert s \vert$ (apply \eqref{eq:kappa-ray} with fixed $t$), hence there exists $\widehat \theta = O(\theta)$ such that for $s$ and $t$ large enough,
\begin{equation}
    \frac{1}{L} \vert s - t \vert - \widehat \theta(\max (\Vert \gamma(s)\Vert,\Vert \gamma(t)\Vert)) \leqslant d(\gamma(s), \gamma(t)) \leqslant L\vert s - t \vert + \widehat  \theta(\max (\Vert \gamma(s)\Vert,\Vert \gamma(t)\Vert)).
    \label{eq:kappa-ray-reformulated}
\end{equation}
\end{remark} 

When $\theta$ is a constant, the Definition \ref{dfn:theta-ray} is that of a quasi-geodesic ray.
For our purposes, it is however necessary to treat the latter specifically because they play a special role in the definition of the sublinearly Morse boundary.
Hence, we will use $q$ for $L$ and $Q$ for $\theta$ when we want to denote a quasi-geodesic ray.

\begin{lemma}[Connect-the-dots for $\theta$-rays]\label{lem:connect-dots}
Let $\gamma$ be a $(L, \theta)$-ray in a proper geodesic metric space $X$.
Then there exists $n >0$ and $\widehat \gamma$ which is a $(L,n \cdot \theta)$-ray in $X$ with the property that 
\begin{itemize}
\item 
$\gamma(t) = \widehat \gamma(t)$ for all nonnegative integer $t$.
\item 
$\widehat \gamma$ is continuous.
\end{itemize}
Moreover, there exists $n >0$ such that
\begin{equation}
\label{eq:completion-is-close}
d(\gamma(t), \widehat{\gamma}(t)) \leqslant n \cdot \theta(t)
\end{equation}
for all $t$. 
\end{lemma}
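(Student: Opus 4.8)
The plan is to construct $\widehat\gamma$ by the standard ``connect-the-dots'' procedure: define $\widehat\gamma$ to agree with $\gamma$ on the nonnegative integers, and on each interval $[k,k+1]$ interpolate along a geodesic segment in $X$ joining $\gamma(k)$ to $\gamma(k+1)$, parametrized proportionally to arclength. Since $X$ is proper and geodesic, such geodesic segments exist, and the resulting $\widehat\gamma$ is automatically continuous, giving the second bullet point for free. The whole difficulty is quantitative: I must show that this piecewise-geodesic path is again a $(L, n\cdot\theta)$-ray, with the sublinear error only inflated by a multiplicative constant $n$, and simultaneously that it stays within $n\cdot\theta(t)$ of $\gamma$.

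First I would bound $d(\widehat\gamma(t),\gamma(t))$ for noninteger $t$. Writing $k=\lfloor t\rfloor$, the point $\widehat\gamma(t)$ lies on the geodesic $[\gamma(k),\gamma(k+1)]$, so $d(\widehat\gamma(t),\gamma(k)) \leqslant d(\gamma(k),\gamma(k+1)) \leqslant L + \theta(k+1)$ by the upper bound in \eqref{eq:kappa-ray}. Likewise $d(\gamma(t),\gamma(k))\leqslant L + \theta(t)\leqslant L+\theta(k+1)$, again by \eqref{eq:kappa-ray} with $|t-k|\leqslant 1$. The triangle inequality then gives $d(\widehat\gamma(t),\gamma(t)) \leqslant 2L + 2\theta(k+1)$. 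Since $\theta$ is nondecreasing and sublinear with $\theta\geqslant 1$, one has $\theta(k+1)\leqslant \theta(t+1)$, and $\theta(t+1)$ can be absorbed into a constant multiple of $\theta(t)$ for large $t$ (using concavity and $\theta\geqslant 1$, exactly the type of estimate isolated in \lemref{Lem:sublinear-estimate}); handling small $t$ separately by enlarging the constant, this yields \eqref{eq:completion-is-close} for a suitable $n$.

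Next I would verify the $\theta$-ray inequalities \eqref{eq:kappa-ray} for $\widehat\gamma$ at arbitrary $s,t$. The cleanest route is to compare $\widehat\gamma$ to $\gamma$ using the bound just proved: for any $s,t$,
\[
d(\widehat\gamma(s),\widehat\gamma(t)) \leqslant d(\gamma(s),\gamma(t)) + d(\widehat\gamma(s),\gamma(s)) + d(\widehat\gamma(t),\gamma(t)),
\]
and symmetrically for the lower bound. Feeding in \eqref{eq:kappa-ray} for $\gamma$ together with \eqref{eq:completion-is-close} converts the additive error from $\theta(\max(s,t))$ into $\theta(\max(s,t))$ plus $n\theta(s)+n\theta(t)$; since $\theta$ is nondecreasing, $\theta(s),\theta(t)\leqslant\theta(\max(s,t))$, so the total additive error is at most $(1+2n)\theta(\max(s,t))$, while the multiplicative constant $L$ is untouched. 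Relabelling the constant $n$ to dominate both this value and the one from the previous step gives a single $n$ that works for both conclusions.

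The step I expect to be the main obstacle is the careful management of the sublinear error term, specifically justifying that $\theta(k+1)$ and $\theta(\max(s,t))$ stay comparable to the quantities appearing in the target inequalities without degrading the multiplicative constant $L$. This is routine but must be done honestly, since the point of the lemma is precisely that continuity can be arranged at the cost of only a \emph{multiplicative} constant in the sublinear function, not a worse multiplicative stretching constant. The concavity and monotonicity of $\theta$, together with $\theta\geqslant 1$, are exactly what make these absorptions possible, and I would invoke \lemref{Lem:sublinear-estimate} (or a direct elementary argument in the same spirit) to control $\theta(t+1)/\theta(t)$ uniformly.
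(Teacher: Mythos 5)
Your proposal is correct, and the construction is the same as the paper's: geodesic interpolation between consecutive integer points, parametrized proportionally to arclength. Where you differ is in how the estimates are organized. The paper verifies the ray inequalities \eqref{eq:kappa-ray} for $\widehat\gamma$ directly, splitting into the cases $\lfloor s\rfloor=\lfloor t\rfloor$ and $\lfloor s\rfloor\neq\lfloor t\rfloor$ and routing the triangle inequality through the integer points $\gamma(\lceil t\rceil)$ and $\gamma(\lfloor s\rfloor)$, so the error appears as $2\theta(s+1)$ and is then absorbed into $n\theta(s)$; the closeness estimate \eqref{eq:completion-is-close} is stated in the lemma but its (short) proof is left implicit. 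You instead prove \eqref{eq:completion-is-close} first, and then deduce both ray inequalities in one line each by comparing $\widehat\gamma(s)$ with $\gamma(s)$ and $\widehat\gamma(t)$ with $\gamma(t)$ at the same parameters, which eliminates the case analysis, keeps the multiplicative constant equal to $L$, and inflates only the additive term to $(1+2n)\theta$. This ordering is slightly more modular, and it has the small advantage of explicitly establishing the ``Moreover'' clause, which is the part of the lemma that later arguments actually invoke (e.g.\ in Proposition~\ref{closeisMorse} and Lemma~\ref{L:symmetry}, where $\widehat\gamma$ is used as a fellow-travelling replacement for $\gamma$). Both arguments ultimately rest on the same absorption $\theta(t+1)\leqslant(1+\theta(1))\,\theta(t)$, valid for any concave nondecreasing $\theta\geqslant 1$; your hedge of using a direct elementary argument in place of Lemma~\ref{Lem:sublinear-estimate} is the right instinct, since that lemma is phrased for points of $X$ rather than for parameter values, though it would also apply after viewing the half-line itself as the ambient space.
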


We will refer to $\widehat{\gamma}$ as a continuous completion of $\gamma$.

\begin{proof}
For every $t \in \mathbf N$, choose a geodesic segment $\sigma_t$ from $\gamma(t)$ to $\gamma(t+1)$ at unit speed and denote its length $\ell_t$.
Note that $\ell(t) \leqslant L+\theta(t+1)$ by the inequality on the right in \eqref{eq:kappa-ray}.
Now for all $t \in [0,+\infty)$ set $\widehat \gamma(t) = \sigma_t(\ell_t \cdot \{t \})$ where $\{ t \}$ denotes the fractional part of $t$. 
In this way $\widehat \gamma$ is continuous by construction. Let $t,s \in [0,+\infty)$ be such that $t \leqslant s$.
Then, either $\lfloor s \rfloor = \lfloor t \rfloor$, in which case $d(\widehat (\gamma(t), \gamma(s)) \leqslant \ell_{\lfloor t \rfloor} \leqslant \theta(t+1) \leqslant n_0 \theta(t)$ for some $n_0$ by the properties of $\theta$, or
\begin{align*}
d(\gamma(t), \gamma(s)) & \leqslant d(\gamma(t), \gamma(\lceil t \rceil)) + d(\gamma(\lceil t \rceil), \gamma(\lfloor s \rfloor)) + d(\gamma(\lfloor s \rfloor), \gamma(s)) \\
& \leqslant L (\lfloor s \rfloor - \lceil t \rceil)) + 2 \theta (s+1) \\
& \leqslant L \vert s - t \vert + 2 \theta (s+1).
\end{align*}
and \begin{align*}
d(\gamma(t), \gamma(s)) & \geqslant  d(\gamma(\lceil s \rceil), \gamma(\lfloor t \rfloor)) - d(\gamma(s), \gamma(\lceil s \rceil)) - d(\gamma(\lfloor t \rfloor), \gamma(t)) \\
& \geqslant L^{-1} (\lceil s \rceil - \lfloor t \rfloor)) - 2 \theta (s+1) \\
& \geqslant L^{-1} \vert s - t \vert + 2 \theta (s+1).
\end{align*}
Finally, $2\theta(s+1) \leqslant n_1 \theta(s)$ for some $n_1$, it remains to set $n = \max(n_0,n_1)$.
\end{proof}

\begin{definition}\label{defn:sim}
Let $\alpha, \beta$ be quasi-geodesic rays or $(L, \theta)$-rays for some $L$ and $\theta$ in a proper geodesic space. We say $\alpha \sim \beta$ if either of the following holds:
\begin{enumerate}[(1)]
\item $\lim_{t \to \infty} \frac{d(\alpha(t), \beta)}{t} =0.$ \label{item:dalphatbeta}
\item $\lim_{t \to \infty} \frac{d(\beta(t), \alpha)}{t} =0.$ \label{item:dbetatalpha}
\end{enumerate}
\end{definition}

\begin{lemma}\label{lem:symmetry-basic} \eqref{item:dalphatbeta} and \eqref{item:dbetatalpha} are equivalent.
\end{lemma}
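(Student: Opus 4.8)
The plan is to prove the single implication \eqref{item:dalphatbeta} $\Rightarrow$ \eqref{item:dbetatalpha}; the reverse then follows verbatim by exchanging the roles of $\alpha$ and $\beta$. So assume $d(\alpha(t),\beta)/t \to 0$ and write $\epsilon(t) = d(\alpha(t),\beta)$, so that $\epsilon(t) = o(t)$. For each integer $n$ I would fix a parameter $p(n) \in [0,+\infty)$ realizing the projection of $\alpha(n)$ onto $\beta$ up to an additive constant, i.e. $d(\alpha(n),\beta(p(n))) \leqslant \epsilon(n) + 1$. An exact nearest point need not exist when $\beta$ is merely a sublinear ray, but an approximate one always does and the extra constant is harmless; alternatively one replaces $\alpha,\beta$ by continuous completions via \lemref{lem:connect-dots}, which moves everything by a sublinear amount. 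Note $p(0) = 0$ since $\alpha(0) = \beta(0) = \mathfrak o$.

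The first step is to pin down the size of $p(n)$. Combining $\big|\,\Vert \beta(p(n)) \Vert - \Vert \alpha(n) \Vert\,\big| \leqslant \epsilon(n)$ with the linear control $\tfrac{1}{2L}u \leqslant \Vert \gamma(u) \Vert \leqslant 2L u$ from \remref{rem:linear-control} (applied to both $\alpha$ and $\beta$) and with $\epsilon(n) = o(n)$, I obtain constants $0 < c \leqslant C$ with $c\,n \leqslant p(n) \leqslant C\,n$ for all large $n$; in particular $p(n) \to +\infty$.

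The second step is a sublinear form of coarse continuity for $p$. For consecutive integers the large-scale Lipschitz upper bound in \eqref{eq:kappa-ray} gives $d(\alpha(n),\alpha(n+1)) \leqslant L + \theta(n+1)$, whence
\[
d(\beta(p(n)),\beta(p(n+1))) \leqslant \epsilon(n) + \epsilon(n+1) + L + \theta(n+1) + 2.
\]
Feeding this into the lower bound in \eqref{eq:kappa-ray} for $\beta$, and using $p(n), p(n+1) \leqslant C(n+1)$ together with sublinearity of $\theta$, I get a bound $|p(n+1) - p(n)| \leqslant g(n)$ with $g(n) = o(n)$. The crucial point is that, although the projection is neither monotone nor boundedly Lipschitz, its jumps are controlled \emph{sublinearly} in $n$, and this is exactly where hypothesis \eqref{item:dalphatbeta}, namely $\epsilon = o$, enters.

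Finally, given a large target parameter $s$, I would let $n = n(s)$ be the smallest integer with $p(n) \geqslant s$; this exists because $p(n) \to +\infty$, and $n \geqslant 1$ because $p(0) = 0 < s$, so that $p(n-1) < s \leqslant p(n)$. The two-sided bound $c\,n \leqslant p(n) \leqslant Cn$ forces $n \asymp s$, hence $n \to \infty$ as $s \to \infty$ and every $o(n)$ quantity is $o(s)$. By minimality, $|s - p(n)| \leqslant |p(n) - p(n-1)| \leqslant g(n-1) = o(s)$, so
\[
d(\beta(s), \alpha) \leqslant d(\beta(s), \beta(p(n))) + \epsilon(n) + 1 \leqslant L\,|s - p(n)| + \theta(\max(s,p(n))) + \epsilon(n) + 1,
\]
and every term on the right-hand side is $o(s)$. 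Dividing by $s$ and letting $s \to \infty$ yields \eqref{item:dbetatalpha}. The main obstacle, and really the only delicate point, is the non-monotonicity of the projection: one cannot expect $p$ to be increasing, so reaching a prescribed level $s$ must go through the first-passage integer $n(s)$, and one must guarantee that the unavoidable overshoot $p(n) - s$ stays sublinear \emph{at the scale $s$}. This is secured precisely by the comparability $n \asymp p(n) \asymp s$ established in the first step.
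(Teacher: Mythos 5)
Your proof is correct, and it reaches the conclusion by a genuinely different route than the paper's. The paper also reduces everything to controlling where $\alpha$ projects on $\beta$, but it argues by contradiction: it introduces the set $\mathcal T$ of parameters $t$ such that $\beta(t)$ is a $2$-approximate projection of some $\alpha(s)$, assumes $\mathcal T$ misses an interval $(t_n(1-\varepsilon),t_n(1+\varepsilon))$ for some sequence $t_n \to \infty$, defines $s_n$ as the supremum of times whose approximate projections occur before $t_n$, and then plays the projections of $\alpha(s_n)$ and $\alpha(s_n+1)$ (which must land on opposite sides of the hole) against each other: the triangle inequality bounds their distance by a sublinear quantity, while the left-hand inequality of \eqref{eq:kappa-ray} for $\beta$ forces that distance to be at least linear in $t_n$. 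Your argument isolates exactly the same mechanism --- consecutive points of $\alpha$ are at bounded-plus-sublinear distance, so the lower ray bound for $\beta$ forbids linear jumps of their projection parameters --- but runs it directly and quantitatively: comparability $p(n)\asymp n$, sublinear increments $|p(n+1)-p(n)|=o(n)$, and a first-passage argument to reach an arbitrary level $s$. This buys a constructive proof that avoids both the contradiction and the slightly delicate step where the paper passes from ``no linear holes along any sequence'' to the existence of a single sublinear gap-bounding function $\mu$; the paper's version, in exchange, never needs to discretize or track the projection as a function of the parameter. One small ordering point in your Step 1: Remark~\ref{rem:linear-control} applies to $\beta$ only at large parameters, so before invoking $\Vert \beta(u)\Vert \leqslant 2Lu$ at $u=p(n)$ you should first record that $p(n)\to\infty$; this follows since $\Vert\beta(p(n))\Vert \geqslant \Vert\alpha(n)\Vert - \epsilon(n) - 1 \to \infty$ while $\beta$ maps bounded parameter sets to bounded sets by the right-hand inequality of \eqref{eq:kappa-ray}. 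With that one line inserted, the two-sided bound $cn \leqslant p(n) \leqslant Cn$ and everything downstream of it goes through as you state.
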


Beware that Lemma \ref{lem:symmetry-basic}  is false if one replaces $\alpha$ and $\beta$ with arbitrary maps, even proper maps from the half-line to $X$ that respect the inequality on the right hand side of \eqref{eq:kappa-ray}. For instance, if one considers the plane parametric curve $\beta$ parametrized by arc-length progressing along the horizontal axis and making jumps of height $2^n$ at time $2^n$ for all $n \geqslant 0$, and the parametrization of the horizontal axis $\alpha$, then this pair has \eqref{item:dalphatbeta} but not \eqref{item:dbetatalpha}. 

\begin{proof}
Assume \eqref{item:dalphatbeta} holds and define
$\eta(s) = d(\alpha(s), \beta)$ for $s \in[0,+\infty)$; this function $\eta$ is sublinear.
Let us introduce the set 
\[ \mathcal T = \left\{ t \in [0, + \infty) : \exists s \in [0, + \infty), d(\alpha(s), \beta(t)) \leqslant 2 d(\alpha (s), \beta) \right\}. \]

We claim that $\mathcal T$ cannot have large holes, more precisely there is no $\varepsilon >0$ and sequence $(t_n)$ with limit $+\infty$ such that
\begin{equation}
(t_n(1-\varepsilon); t_n(1+\varepsilon)) \cap \mathcal T = \emptyset 
\label{eq:to-contradict-T}
\end{equation}
for all $n$.
Indeed, assume the contrary and define 
\[ s_n = \sup \{ s : \exists t < t_n, d(\alpha(s), \beta(t)) \leqslant 2 d(\alpha(s), \beta) \} \]
It can be checked that $s_n$ is well defined for all $n$ and tends to $+\infty$, since $\alpha$ is proper while $\beta[0,t_n]$ is bounded and $X$ is proper.
Now consider a nearest point projection $p_n$ of $\alpha(s_n+1)$ on $\beta$.
Necessarily, $t > t_n(1+\varepsilon)$ for every $t$ such that $p_n= \beta(t)$, in view of the definition of $s_n$ and $t_n$.
Let $q_n$ be a nearest-point projection of $\alpha(s_n)$ on $\beta$; by the same argument, if $q_n = \beta(t)$ then $t<t_n(1-\varepsilon)$.
Now, on the one hand by the triangle inequality,
\begin{align}
d(p_n, q_n) & \leqslant d(p_n, \alpha(s_{n}+1) + d(\alpha (s_n+1), \alpha (s_n)) + d(\alpha(s_n), q_n)) \notag \\
& \leqslant L + \theta(s_{n+1}) + \eta(s_n) + \eta(s_{n} + 1). \label{eq:first-ineq-pn-qn}
\end{align}
On the other hand, by the left-hand side of \eqref{eq:kappa-ray},
\begin{align}
d(p_n, q_n) & \geqslant 2L^{-1} \varepsilon t_n - \widehat \theta (\max( \Vert p_n \Vert, \Vert q_n \Vert)) 
 \label{eq:second-ineq-pn-qn}
\end{align}
where $\widehat \theta = O(\theta)$.
However, there is a constant $M >0$ such that $M^{-1} t_n \leqslant s_n \leqslant Mt_n$ for $n$ large enough; one can take $M = 2L^2(1+\varepsilon)$, by considering the first inequality in Remark \ref{rem:linear-control}.

Making $n \to + \infty$ and using that $\theta$, $\widehat \theta$ and $\eta$ are sublinear, \eqref{eq:first-ineq-pn-qn} and \eqref{eq:second-ineq-pn-qn} are in contradiction with one another.
Thus \eqref{eq:to-contradict-T} cannot be true.
It follows that there is a sublinear function $\mu$ such that for all $t \geqslant 0$, there is $t'$ with $\vert t - t' \vert \leqslant \mu(t)$ and $t' \in \mathcal T$. 
And then $d(\beta(t), \alpha) \leqslant d(\beta(t), \beta(t')) + d(\beta(t'), \alpha)$, which is bounded above by a sublinear function of $t$ involving $\eta$, $\mu$ and $L$ in view of the definition of $\mathcal T$ and the linear control between $s$ and $t$ when $\beta(t')$ is a closest point projection of $\alpha(s)$ on $\beta$.
We proved that \eqref{item:dalphatbeta} implies \eqref{item:dbetatalpha} for $(L, \theta)$ rays; the converse implication holds by symmetry, and quasi-geodesics are $\theta$-rays, hence the Lemma is proved.  \qedhere
\end{proof}
\begin{figure}

\begin{tikzpicture}[line cap=round,line join=round,>=angle 45,x=0.5cm,y=0.5cm]
\clip(-3,-1) rectangle (12,5);

\draw [color=black, line width=1pt,domain=-2.2:10, samples = 50, variable = \t] plot({\t},{0.1*sin(\t r)}) node[right]{$\alpha$};

\draw [color=black, line width=1pt,domain=-2:10.5, samples = 500, variable = \t] plot({\t- 3/(1+(\t-6)*(\t-6)) -  3/(1+(\t-8)*(\t-8))},{4/(1+(\t-7)*(\t-7))+ 1/(1+\t*\t)}) node[right]{$\beta$};

\fill (0,1.1) circle (1.5pt) node[above] {$\beta(t_{n-1})$};
\fill (4,4) circle (1.5pt) node[above] {$\beta(t_n)$};

\fill (3.5,-0.04) circle (1.5pt) node[below left] {$\alpha(s_n)$};
\fill (5,-0.1) circle (1.5pt) node[below right] {$\alpha(s_n +1)$};

\fill (5.5,1.3) circle (1.5pt) node[above right] {$p_n$};
\fill (3.3,0.5) circle (1.5pt) node[above left] {$q_n$};

\fill (4.5,1.9) circle (1.5pt) node[above right] {$\beta(t_{n}(1+\varepsilon))$};
\fill (2.5,2.2) circle (1.5pt) node[above left] {$\beta(t_n(1-\varepsilon))$};

\draw (5,-0.1) -- (5.5, 1.3);
\draw [->] (5.4,1.02) -- (5.7, 0.9) -- (5.8,1.25);

\draw (3.5,-0.04) -- (3.3, 0.5);

\end{tikzpicture}
\caption{Proof of Lemma \ref{lem:symmetry-basic}.}
\end{figure}

\subsection{Definition of the sublinearly Morse boundary $\partial X$}
\label{subsec:dfn-pka-boundaries}

For more extensive references on sublinearly Morse boundaries, see \cite{QRT19} and \cite{QRT20}. A $(\mathsf q,\mathsf Q)$-quasigeodesic is a map $\gamma : [0,+\infty) \to X$ such that 
$ \frac{1}{\mathsf q}d(x,y) - \mathsf Q \leqslant d(\gamma(x), \gamma(y)) \leqslant \mathsf q d(x,y) + \mathsf Q $
for all $x,y \in [0, + \infty)$.
 
\begin{definition}[$\kappa$-Morse quasigeodesic {\cite[Definition 3.2]{QRT20}}]\label{defn:Morse}
Let $Z \subseteq X$ be a closed subspace. Let $m_Z: \mathbb R^2 \to \mathbb R$ be a proper function.
We say that $Z$ is $\kappa$-Morse with Morse gauge $m_Z$ if for every sublinear function $\kappa'$, for every $r>0, n>0$ such that $m_Z(\mathsf q,\mathsf Q) \ll_{\kappa} r $, there exists $R(Z, r, n, \kappa')>0$ such that if $\beta$ is a $(\mathsf q, \mathsf Q)$-quasigeodesic then
\[
d_X(\beta_R, Z) \leqslant n \cdot \kappa'(R) \implies \beta|_r \subset \mathcal{N}_{\kappa} (Z, m_Z(\mathsf q, \mathsf Q)).
\]
\end{definition}

Applying the definition to all $r>0$ we have the following alternative characterization:
\begin{definition}\label{defn2:Morse}
Let $Z$ be a closed subspace. Let $m_Z: \mathbb R^2 \to \mathbb R$ be a proper function.
We say that $Z$ is $\kappa$-Morse with Morse gauge $m_Z$ if $\beta$ is any other $(\mathsf q, \mathsf Q)$-quasi-geodesic ray that sublinearly tracks $Z$, then we have:
\[
\beta \in  \mathcal{N}_{\kappa} (Z, m_Z(\mathsf q, \mathsf Q)).
\]
\end{definition}

\begin{proposition}[{\cite[Lemma 3.4 and Corollary 3.5]{QRT20}}]\label{straightandnot}
Let $\alpha$ and $\beta$ are quasi-gedodesic rays in $X$,  such that $\beta$ is $(\mathsf q, \mathsf Q)$-quasi-geodesic and $\alpha$ is $\kappa$-Morse. 
If $\alpha \sim \beta$ then $\beta$ is $\kappa$-Morse with gauge
$\mm_\alpha + 4 \mm_\alpha(\mathsf q, \mathsf Q)$.
\end{proposition}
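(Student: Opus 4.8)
The plan is to deduce the $\kappa$-Morse property of $\beta$ from that of $\alpha$ by first showing that $\alpha$ and $\beta$ lie in $\kappa$-neighborhoods of one another, and then transporting the defining property of \defref{defn2:Morse} across this two-sided containment. I will work throughout with the characterization of \defref{defn2:Morse}: to prove that $\beta$ is $\kappa$-Morse with gauge $\mm_\beta := \mm_\alpha + 4\mm_\alpha(\qq,\sQ)$, it suffices to show that every $(\qq',\sQ')$-quasi-geodesic ray $\delta$ with $\delta \sim \beta$ satisfies $\delta \subset \mathcal{N}_\kappa(\beta, \mm_\alpha(\qq',\sQ') + 4\mm_\alpha(\qq,\sQ))$.

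First I would record that $\sim$ is transitive on quasi-geodesic rays. Given $\delta \sim \beta$ and $\beta \sim \alpha$, for each $t$ I pick a nearest point $\beta(s)$ to $\delta(t)$ and estimate $d(\delta(t),\alpha) \leqslant d(\delta(t),\beta) + d(\beta(s),\alpha)$; since $s$ is linearly controlled by $t$ by \remref{rem:linear-control}, both terms are $o(t)$, so $\delta \sim \alpha$, using \lemref{lem:symmetry-basic} to pass between the two forms of the relation freely. Consequently, as $\alpha$ is $\kappa$-Morse and $\delta \sim \alpha$, \defref{defn2:Morse} yields $\delta \subset \mathcal{N}_\kappa(\alpha, \mm_\alpha(\qq',\sQ'))$; applied to $\beta$ itself it gives the forward containment $\beta \subset \mathcal{N}_\kappa(\alpha, \mm_\alpha(\qq,\sQ))$.

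The crux is the reverse containment $\alpha \subset \mathcal{N}_\kappa(\beta, D)$ with $D$ comparable to $\mm_\alpha(\qq,\sQ)$, and here it is the quasi-geodesic-ray structure, not the Morse property, that does the work. I would consider the nearest-point projection $\pi$ of $\beta$ onto $\alpha$. The forward containment says $d(\beta(\tau), \pi(\beta(\tau))) \leqslant \mm_\alpha(\qq,\sQ)\,\kappa(\beta(\tau))$ for all $\tau$; moreover $\pi(\beta(0)) = \mathfrak o = \alpha(0)$ while $\Norm{\pi(\beta(\tau))} \to \infty$. Because $\beta$ is a quasi-geodesic, $\tau \mapsto \pi(\beta(\tau))$ makes no large jumps, so a coarse intermediate-value argument, using properness of $X$, shows that $\pi\circ\beta$ is coarsely onto $\alpha$ up to $\kappa$-error. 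Hence for every $s$ there is $\tau$ with $\alpha(s)$ within $O(\mm_\alpha(\qq,\sQ))\,\kappa(\alpha(s))$ of $\pi(\beta(\tau))$, whence $d(\alpha(s),\beta) \leqslant d(\alpha(s),\pi(\beta(\tau))) + d(\pi(\beta(\tau)),\beta(\tau)) \leqslant D\,\kappa(\alpha(s))$.

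Finally I would combine the two containments. Given $x \in \delta$ with $d(x,\alpha) \leqslant \mm_\alpha(\qq',\sQ')\,\kappa(x)$, I choose $\alpha(s)$ realizing this distance; the reverse containment supplies a point of $\beta$ within $D\,\kappa(\alpha(s))$ of $\alpha(s)$, and the Sublinear Estimation Lemma (\lemref{Lem:sublinear-estimate}) converts $\kappa(\alpha(s))$ into a bounded multiple of $\kappa(x)$ precisely because $d(x,\alpha(s)) \leqslant \mm_\alpha(\qq',\sQ')\,\kappa(x)$. Summing the two estimates gives $d(x,\beta) \leqslant \bigl(\mm_\alpha(\qq',\sQ') + 4\mm_\alpha(\qq,\sQ)\bigr)\kappa(x)$, i.e.\ $\delta \subset \mathcal{N}_\kappa(\beta, \mm_\beta(\qq',\sQ'))$, as required. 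I expect the main obstacle to be the reverse containment: establishing that the projection of one quasi-geodesic ray onto another is coarsely surjective with only a $\kappa$-sized error demands careful control of the projection's jumps, and matching the resulting constants is what pins down the factor $4$ in the gauge.
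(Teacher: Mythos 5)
Your overall strategy is the same as the paper's, which proves this statement as the special case of the more general Proposition~\ref{closeisMorse}: first show that any quasi-geodesic $\delta$ tracking $\beta$ also tracks $\alpha$, apply the $\kappa$-Morse property of $\alpha$ to get $\delta \subset \mathcal{N}_\kappa(\alpha, \mm_\alpha(\qq',\sQ'))$ and $\beta \subset \mathcal{N}_\kappa(\alpha,\mm_\alpha(\qq,\sQ))$, then establish a reverse containment of $\alpha$ in a $\kappa$-neighbourhood of $\beta$ and conclude by the triangle inequality. In fact your treatment of the reverse containment is \emph{more} explicit than the paper's: the paper projects each point $y_t \in \delta$ to a point $q_t \in \alpha$ and then projects $q_t$ to a point $s_t \in \beta$, asserting $d(q_t,s_t) \leqslant \mm_\alpha(\qq,\sQ)\,\kappa(\Vert s_t\Vert)$ ``by the triangle inequality and the Morse property''; since the Morse property only bounds distances from points of $\beta$ to $\alpha$, not from points of $\alpha$ to $\beta$, this step tacitly uses the coarse surjectivity of the projection of $\beta$ onto $\alpha$ --- which is precisely the intermediate-value argument you spell out.

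The genuine shortfall in your proposal is the gauge, and you correctly anticipated it. The statement asserts the specific gauge $\mm_\alpha + 4\mm_\alpha(\qq,\sQ)$, and your bookkeeping cannot produce the constant $4$: invoking Lemma~\ref{Lem:sublinear-estimate} to convert $\kappa(\alpha(s))$ into a multiple of $\kappa(x)$ introduces the constant $\sD_2$ of that lemma, which depends on $\kappa$ and on $\mm_\alpha(\qq',\sQ')$ and is not bounded by any universal number; moreover your intermediate-value argument for the reverse containment already costs additional factors of $\qq$ and $\sQ$ beyond $2\mm_\alpha(\qq,\sQ)$. So what you prove is that $\beta$ is $\kappa$-Morse with \emph{some} proper gauge of the form $\mm_\alpha(\qq',\sQ') + C(\qq,\sQ,\qq',\sQ',\kappa)$, which is all the paper ever uses later, but not the literal statement. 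The paper's constant $4$ arises as $2\times 2$ without any appeal to the Estimation Lemma: since all rays pass through $\mathfrak o$, the nearest-point projection $q_t$ of a point $y_t$ with $\Vert y_t\Vert = t$ onto $\alpha$ satisfies $\Vert q_t\Vert \leqslant 2t$, and likewise $\Vert s_t \Vert \leqslant 2 \Vert q_t \Vert$, while concavity of $\kappa$ gives $\kappa(2t)\leqslant 2\kappa(t)$, so each radius-doubling costs only a factor $2$. Replacing your use of Lemma~\ref{Lem:sublinear-estimate} by this norm-doubling-plus-concavity estimate is the fix, though even then the constants your reverse-containment argument yields remain slightly worse than $4\mm_\alpha(\qq,\sQ)$.
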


We will reprove this in a greater generality in Proposition~\ref{closeisMorse}.


Define $\partial_\kappa X$ as the set of $\kappa$-Morse quasigeodesic up to $\sim$. For any $\kappa$-Morse $\beta$ , define $\partial \mathcal U(\beta,r)$ as
\[ \lbrace \mathbf a: \alpha\in \mathbf a \text{ is a $(\mathsf q, \mathsf Q)$-quasigeodesic and } r \gg_{\kappa} \mm_\beta(\mathsf q, \mathsf Q) \implies  \alpha_{\mid r} \subseteq \mathcal N_{\kappa}(\beta, \mm_\beta(\mathsf q, \mathsf Q)) \rbrace, \]
and then
\[ \partial \mathcal B(\mathbf b) = \lbrace \mathcal V \subseteq \partial_\kappa X : \exists \beta \in \mathbf b, \exists r>0, \mathcal V \supseteq \partial \mathcal U(\beta, r) \rbrace.
\]
The topology on $\partial_\kappa X$ is defined as the unique one so that the $\partial \mathcal B(\mathbf b)$ are the neighbourhood systems at $\mathbf b$ \cite[Lemma 4.5]{QRT20}; it is metrizable \cite[Lemma 4.8]{QRT20}.
Finally, the {sublinearly Morse boundary} is defined as $\partial X =\cup_\kappa \uparrow \partial_ \kappa X$.

\subsection{Sublinear biLipschitz equivalence}

In this paragraph, $\theta$ is a sublinear function with the same properties as $\kappa$.

\begin{definition}
Let $Z$ and $Z'$ be two closed unbounded subsets in $X$.
Say that $Z$ and $Z'$ linearly separate if $d(Z \cap S(\mathfrak o, r), Z') -r$ stays bounded as $r \to + \infty$.
\end{definition}

\begin{definition}[$\theta$-SBE]\label{defn:theta-SBE}
Let $(X, \mathfrak o)$ and $(Y, \mathfrak o)$ be proper geodesic metric spaces with basepoints. Let $L \geqslant 1$ be a constant, and let $\theta$ be a sublinear function as before.
We say that $\Phi : X \to Y$ is a $(L, \theta)$-sublinear biLipschitz equivalence ($\theta$-SBE for short) if
\[ \frac{1}{L}d(x_1,x_2)  - \theta(\max (\Vert x_1 \Vert, \Vert x_2 \Vert)) \leqslant d(\Phi(x_1), \Phi(x_2))) \leqslant Ld(x_1,x_2)  +\theta (\max (\Vert x_1 \Vert, \Vert x_2 \Vert))\]
and $Y = \mathcal{N}_\theta(\Phi(X), D)$ for some $D \geqslant 0$.
\end{definition}

\begin{proposition}[Inverses]\label{prop:inverses-SBE}
Let $\Phi: X \to Y$ be a $\theta$-SBE. Then there exists $\overline \Phi : Y \to X$ a $\theta$-SBE and $n >0$ such that for all $x \in X$,
\begin{equation}
\label{eq:first-prop-inverse-SBE}
d(x,\overline{\Phi}(\Phi(x))) \leqslant n\cdot \theta(x )
\end{equation}
and for all $y \in Y$, 
\begin{equation}
\label{eq:second-prop-inverse-SBE}
d(y,{\Phi}(\overline \Phi(y))) \leqslant n \cdot \theta(y ).
\end{equation}
\end{proposition}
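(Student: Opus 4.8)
The plan is to construct $\overline\Phi$ by the standard coarse-inverse recipe, using the coarse density clause $Y = \mathcal N_\theta(\Phi(X), D)$ of Definition~\ref{defn:theta-SBE}, and then to verify all four required properties (the two composition estimates and the two SBE inequalities for $\overline\Phi$) by transferring additive errors between $X$ and $Y$ with the help of Lemma~\ref{Lem:sublinear-estimate} and the concavity of $\theta$. Concretely, for each $y \in Y$ I would choose $\overline\Phi(y) := x$ where $x \in X$ satisfies $d(\Phi(x), y) \leqslant D\theta(y) + 1 \leqslant (D+1)\theta(y)$; such an $x$ exists because $y \in \mathcal N_\theta(\Phi(X), D)$ and $\theta \geqslant 1$. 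With this definition \eqref{eq:second-prop-inverse-SBE} is immediate, since $\Phi(\overline\Phi(y)) = \Phi(x)$ lies within $(D+1)\theta(y)$ of $y$, so it holds with any $n \geqslant D+1$.

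Next I would record the linear comparison of norms, the analogue of Remark~\ref{rem:linear-control} for $\Phi$. Taking $x_2 = \mathfrak o$ in Definition~\ref{defn:theta-SBE} and writing $c_0 = d(\Phi(\mathfrak o), \mathfrak o)$, one gets $\tfrac1L \Vert x \Vert - \theta(\Vert x\Vert) - c_0 \leqslant \Vert \Phi(x)\Vert \leqslant L\Vert x\Vert + \theta(\Vert x\Vert) + c_0$; since $\theta$ is sublinear this yields $\tfrac{1}{2L}\Vert x\Vert \leqslant \Vert \Phi(x)\Vert \leqslant 2L\Vert x\Vert$ once $\Vert x\Vert$ is large enough. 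Combined with the concavity bound $\theta(Ct) \leqslant C\theta(t)$ for $C \geqslant 1$ (which follows from $\theta(0)\geqslant 0$ and concavity), this lets me replace $\theta(\Phi(x))$ by a bounded multiple of $\theta(x)$ and conversely throughout, and is the mechanism for moving the additive error across $\Phi$.

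For \eqref{eq:first-prop-inverse-SBE} I would set $y = \Phi(x)$ and $x' = \overline\Phi(y)$, so that $d(\Phi(x), \Phi(x')) \leqslant (D+1)\theta(\Phi(x))$, and then invoke the lower SBE inequality to get $\tfrac1L d(x,x') \leqslant (D+1)\theta(\Phi(x)) + \theta(\max(\Vert x\Vert, \Vert x'\Vert))$. The one point requiring care is to bound the right-hand $\max$ without circularity: rather than estimating $\Vert x'\Vert$ through $d(x,x')$, I would first bound $\Vert \Phi(x')\Vert \leqslant \Vert\Phi(x)\Vert + (D+1)\theta(\Phi(x))$, feed this through the norm comparison of the previous paragraph to obtain $\Vert x'\Vert \leqslant C\Vert x\Vert$ for $\Vert x\Vert$ large, and then use concavity to replace both error terms by multiples of $\theta(x)$. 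The bounded region $\Vert x\Vert \leqslant R$ is absorbed into the constant using properness of $X$ and $\theta \geqslant 1$.

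Finally I would check that $\overline\Phi$ is itself a $\theta$-SBE. Coarse density of $\overline\Phi(Y)$ in $X$ is free from \eqref{eq:first-prop-inverse-SBE}, since $\overline\Phi(\Phi(x))$ is $\theta$-close to each $x$. For the two-sided inequality, given $y_1, y_2$ with $x_i = \overline\Phi(y_i)$ I would compare $d(x_1, x_2)$ to $d(y_1, y_2)$ by noting that $d(\Phi(x_1),\Phi(x_2))$ is within $(D+1)(\theta(y_1)+\theta(y_2))$ of $d(y_1,y_2)$ and applying the SBE inequalities for $\Phi$, again using the norm comparisons $\Vert x_i\Vert \asymp \Vert y_i\Vert$ to rewrite the additive terms in the form $\theta(\max(\Vert y_1\Vert,\Vert y_2\Vert))$. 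I expect the main obstacle to be precisely this bookkeeping of the base-point-dependent error $\theta(\max(\cdot,\cdot))$: unlike in the quasi-isometry case the additive constant is not uniform, so every estimate must be paired with a linear comparison of norms and an application of concavity in order to legitimately transfer $\theta$ from one space, or one argument, to another.
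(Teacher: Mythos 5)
Your proof is correct and follows essentially the same route as the paper: a coarse inverse built by (approximately) projecting onto $\Phi(X)$, with \eqref{eq:first-prop-inverse-SBE}, \eqref{eq:second-prop-inverse-SBE} and the SBE inequalities for $\overline\Phi$ all obtained by transferring errors through the SBE inequalities for $\Phi$, the linear comparison of norms, and concavity of $\theta$ --- your use of an approximate projection from the outset simply builds in the paper's closing remark (which relaxes exact nearest-point projection when $\Phi(X)$ is not closed). One cosmetic point: on the bounded region $\Vert x \Vert \leqslant R$ the uniform bound on $d(x,\overline{\Phi}(\Phi(x)))$ follows from your norm comparison (which bounds $\Vert \overline{\Phi}(\Phi(x)) \Vert$ once $\Vert \Phi(\overline{\Phi}(\Phi(x))) \Vert$ is bounded), not from properness of $X$, which is never actually needed.
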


\begin{proof}
This follows from \cite[Proposition 2.4]{cornulier2017sublinear}, however, let us give here a self-contained proof. For simplicity, let us assume for now that $\Phi(X)$ is closed in $Y$; we will see in the end how to remove this assumption if necessary.
For every $y \in Y$, define $\overline \Phi(y)$ as some $x \in X$ such that $\Phi(X)$ is a nearest-point projection of $y$ on $\Phi(X)$. 
By assumption, $\Phi$ is a $\theta$-SBE, hence applying the last property in Definition \ref{defn:theta-SBE} one gets that 
\begin{equation}
\label{eq:almost-second-prop-inverse-SBE}
d(y, \Phi(\overline \Phi(y))) \leqslant D \theta(\Phi(x)),
\end{equation}
for some $D \geqslant 0$.
This is almost \eqref{eq:second-prop-inverse-SBE}; let us rework this inequality slightly.
Since we know that $d(y, \Phi(x)) \leqslant D \theta(\Vert y \Vert)$, when $y$ is far enough from $\mathfrak o$, there is some constant $K$ so that as soon as $\Vert y \Vert \geqslant K$, $\Vert y \Vert / 2 \leqslant \Vert \Phi(x) \Vert \leqslant 2 \Vert y \Vert$.
It follows that for some constant $K'$, for all $y$, $\Vert \Phi(x) \Vert \leqslant 2 \Vert y \Vert + K'$.
Hence
\[ d(y,{\Phi}(\overline \Phi(y))) \leqslant  D \theta( \Phi(x) ) \leqslant D \theta(2 \Vert y \Vert + K') = O(\theta( y )).\]
Thus we proved \eqref{eq:second-prop-inverse-SBE}.
Now for \eqref{eq:first-prop-inverse-SBE}, note that $x$ and $\overline \Phi (\Phi(x))$ have the same image, namely $\Phi(x)$, through $\Phi$.
So $\frac{1}{L}d(x, \overline \Phi(\Phi(x)) - \theta(\max (\Vert x \Vert, \Vert \overline \Phi(\Phi(x)) \Vert) \leqslant d(\Phi(x), \Phi(x)) =  0$, whence
\begin{equation}
\label{eq:d(x,line-Phi-Phix)-inter}
d(x, \overline \Phi(\Phi(x)) \leqslant L \theta (\max (\Vert x \Vert, \Vert \overline \Phi(\Phi(x)) \Vert). 
\end{equation}
But also $\frac{1}{L}\Vert \overline \Phi(\Phi(x) \Vert - \theta(\overline \Phi(\Phi(x))) \leqslant \Vert \Phi(x) \Vert \leqslant L \Vert x \Vert + \theta ( x )$. There exists $K$ such that $\theta(r) \leqslant K + r/(2L))$, and then
\[ \frac{1}{2L}\Vert \overline \Phi(\Phi(x) \Vert   \leqslant L \Vert x \Vert + \frac{\Vert x \Vert}{2L} + 2K. \]
Plugging this into \eqref{eq:d(x,line-Phi-Phix)-inter},
\[ d(x, \overline \Phi(\Phi(x)) \leqslant L \theta (\max (\Vert x \Vert, (2L^2+1)\Vert x \Vert +2LK) = O(\theta(\Vert x \Vert)). \]
Finally we need to prove that $\overline \Phi$ is a $\theta$-SBE.
Applying the inequality on the right in Definition \ref{defn:theta-SBE} for $\Phi$, 
\begin{align*}
d(\overline \Phi(y), \overline \Phi(y')) & \leqslant L d(\Phi \overline \Phi(y), \Phi \overline \Phi(y')) + \theta (\max (\Vert \overline \Phi(y) \Vert, \Vert \overline \Phi(y') \Vert)) \\
& \leqslant L d(y,y') + 2 \theta  (\max (\Vert y \Vert, \Vert y' \Vert, \Vert \overline \Phi(y) \Vert, \Vert \overline \Phi(y') \Vert))
\end{align*}
Note that 
\[ \frac{1}{L}\Vert \overline \Phi (y) \Vert - \theta (\Vert \overline \Phi(y) \Vert) \leqslant d(\Phi \overline \Phi(y), \Phi(\mathfrak o))
\leqslant \Vert y \Vert + O(\theta(\Vert y \Vert)). \]
so that $\Vert \overline \Phi (y) \Vert \leqslant 2L \Vert y \Vert + M$ for some constant $M$. Thus
\[ d(\overline \Phi(y), \overline \Phi(y')) \leqslant L d(y,y') + 2 \theta  (\max (\Vert y \Vert, \Vert y' \Vert, \Vert \overline \Phi(y) \Vert, \Vert \overline \Phi(y') \Vert)) \leqslant L d(y,y') + 2P \theta  (\max (\Vert y \Vert, \Vert y' \Vert) \]
for some $P >0$.
This proves the inequality on the right in Definition \ref{defn:theta-SBE} for $\overline \Phi$ ; in the exact same way, the left inequality on the left is obtained by using the inequality on the left for $\Phi$.

It remains to check that $X = \mathcal{N}_\theta(\overline \Phi(Y), \overline  D)$ for some $\overline D \geqslant 0$. This follows from \eqref{eq:first-prop-inverse-SBE} exactly the same way that we deduced \eqref{eq:second-prop-inverse-SBE} from \eqref{eq:almost-second-prop-inverse-SBE}.

Finally, $\Phi(X)$ may not be closed in $Y$, but in the construction of $\overline \Phi(y)$, we can relax the condition defining $\overline \Phi(y)$ by replacing the nearest-point projection of $y$ with some point at distance at most $2 d(y, \Phi(X))$ from $y$. All the estimates afterwards go through with additional multiplicative constants.
\end{proof}

\begin{remark}
When $\theta = 1$, the proof is easier and it is one of the first exercises on quasiisometries in textbooks; see e.g. \cite[Exercise 8.12]{DrutuKapovich}.
\end{remark}

\begin{lemma}
Let $\alpha$ be an $(L, \theta)$-ray. 
Let $\Phi$ and $\overline \Phi$ be as in Proposition \ref{prop:inverses-SBE}.
Let $\widehat \alpha$ and $\widehat {\overline \Phi \Phi \alpha}$ be continuous completions of $\alpha$ and $\overline \Phi \Phi \alpha$.
Then there exists $n$ depending on $L$ and $\theta$ and $n'$ depending on $L,\theta$ and $\Phi$, such that $\widehat \alpha \subset \mathcal N_\theta(\widehat{\overline \Phi \Phi \alpha}, n)$ and  $\widehat{\overline \Phi \Phi \alpha} \subset \calN_\theta(\widehat \alpha, n')$.
\end{lemma}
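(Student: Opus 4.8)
The plan is to reduce everything to the pointwise comparison $d(\alpha(t),\overline\Phi\Phi\alpha(t)) \leq c_0\,\theta(t)$ and to the connect-the-dots control of each completion near its defining integer sequence. First I would record that $\overline\Phi\Phi\alpha$ is itself a sublinear ray, so that its completion $\widehat{\overline\Phi\Phi\alpha}$ is legitimately produced by Lemma~\ref{lem:connect-dots}: composing the two defining inequalities of the $(L,\theta)$-ray $\alpha$ with the two $\theta$-SBE inequalities for $\Phi$ and then for $\overline\Phi$, and replacing each $\theta(\Vert\alpha(s)\Vert)$ by a multiple of $\theta(s)$ via the linear control of Remark~\ref{rem:linear-control} together with the concavity estimate $\theta(cs)\leq c\,\theta(s)$ (valid for $c\geq 1$ since $\theta$ is concave and nonnegative), shows $\overline\Phi\Phi\alpha$ is an $(L',\theta')$-ray with $\theta'=O(\theta)$. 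Next I would invoke \eqref{eq:first-prop-inverse-SBE}, giving $d(\alpha(t),\overline\Phi\Phi\alpha(t))\leq c\,\theta(\Vert\alpha(t)\Vert)$ with $c$ depending only on $L,\theta$; converting $\theta(\Vert\alpha(t)\Vert)$ into $\theta(t)$ by the same linear-control and concavity argument yields the uniform pointwise bound $d(\alpha(t),\overline\Phi\Phi\alpha(t))\leq c_0\,\theta(t)$, with $c_0$ depending only on $L,\theta$.

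For the inclusion $\widehat\alpha\subset\mathcal N_\theta(\widehat{\overline\Phi\Phi\alpha},n)$, fix $p=\widehat\alpha(t)$ and set $m=\lfloor t\rfloor$. Since the completions agree with their defining sequences on integers, $\widehat{\overline\Phi\Phi\alpha}(m)=\overline\Phi\Phi\alpha(m)$ and $\widehat\alpha(m)=\alpha(m)$, so
\[ d\big(p,\widehat{\overline\Phi\Phi\alpha}(m)\big)\leq d\big(\widehat\alpha(t),\widehat\alpha(m)\big)+d\big(\alpha(m),\overline\Phi\Phi\alpha(m)\big). \]
The first term on the right is a connect-the-dots jump of $\widehat\alpha$, controlled by $L,\theta$ through Lemma~\ref{lem:connect-dots}, and the second is at most $c_0\,\theta(m)$; hence the left side is at most $n_1\,\theta(t)$ with $n_1$ depending only on $L,\theta$. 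It then remains to turn $\theta(t)$ into $\theta(\Vert p\Vert)$: the linear control of Remark~\ref{rem:linear-control} applied to $\alpha$, together with $d(\alpha(t),\widehat\alpha(t))=O(\theta(t))$, gives $t\leq c'\Vert p\Vert$ for large $t$, so $\theta(t)\leq c'\theta(\Vert p\Vert)$, while small values of $t$ are absorbed because both completions remain in a bounded region there and $\theta\geq 1$. This yields $d(p,\widehat{\overline\Phi\Phi\alpha})\leq n\,\theta(\Vert p\Vert)$ with $n$ depending only on $L,\theta$.

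The reverse inclusion $\widehat{\overline\Phi\Phi\alpha}\subset\mathcal N_\theta(\widehat\alpha,n')$ proceeds symmetrically: for $q=\widehat{\overline\Phi\Phi\alpha}(t)$ and $m=\lfloor t\rfloor$ I would first move from $q$ to $\overline\Phi\Phi\alpha(m)=\widehat{\overline\Phi\Phi\alpha}(m)$ along the completion, then cross to $\alpha(m)=\widehat\alpha(m)$ using the pointwise bound, and finally convert $\theta(t)$ to $\theta(\Vert q\Vert)$ via the linear control for $\overline\Phi\Phi\alpha$. The one asymmetry, and the source of the different dependence of the constant, is that this first move now runs along $\widehat{\overline\Phi\Phi\alpha}$, so it is governed by the connect-the-dots constant of $\overline\Phi\Phi\alpha$, which is controlled by the sublinear-ray gauge $\theta'$ obtained in the first step; as that gauge is assembled from $\Phi$ and $\overline\Phi$, the constant $n'$ depends on $\Phi$ in addition to $L$ and $\theta$.

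The main obstacle I anticipate is not any individual inequality but the disciplined bookkeeping required to pass repeatedly between the arc-length parameter $t$ and the radial quantity $\Vert\cdot\Vert$ entering the definition of $\mathcal N_\theta$: each conversion must go through Remark~\ref{rem:linear-control} and the concavity inequality $\theta(cs)\leq c\,\theta(s)$, and uniformity must be checked down to $t=0$ rather than only asymptotically. A secondary point needing care is to verify that $\overline\Phi\Phi\alpha$ satisfies both inequalities of a sublinear ray, not merely the upper one, so that Lemma~\ref{lem:connect-dots} applies and the linear control is genuinely available on that side as well.
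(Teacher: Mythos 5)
Your proposal is correct and follows essentially the same route as the paper: both rest on the pointwise bound $d(x,\overline\Phi\Phi x)\leq n_0\,\theta(x)$ from Proposition~\ref{prop:inverses-SBE}, the completion estimate \eqref{eq:completion-is-close} from Lemma~\ref{lem:connect-dots} combined with Remark~\ref{rem:linear-control}, and the observation that $\overline\Phi\Phi\alpha$ is an $(L',\theta)$-ray with constants depending on $\Phi$ (which is exactly why $n'$, unlike $n$, depends on $\Phi$). Your only deviation is routing the triangle inequality through the integer parameters $m=\lfloor t\rfloor$, where the completions agree with the original rays --- a slightly cleaner bookkeeping device than the paper's chaining of bounds at a common parameter $t$, but not a different argument.
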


\begin{proof}
By Proposition \ref{prop:inverses-SBE}, there exists $n_0$ such that for every $x$ in $\alpha$, $d(x, \overline \Phi \Phi x) \leqslant n_0 \theta (x)$. Hence $\alpha \subset \mathcal N_\theta(\overline \Phi \Phi \alpha, n_0)$. Morever, by Equation~\eqref{eq:completion-is-close} and Remark \ref{rem:linear-control}, there is $n_1$ depending on $L$ and $\theta$ such that
\[ d(\widehat \gamma(t), \gamma(t) \leqslant n_1 \theta (\max(\Vert \gamma(t) \Vert, \Vert \widehat \gamma(t) \Vert) \]
for all $t$. It follows that 
\[ \alpha \subset \mathcal N_\theta (\overline \Phi \Phi \alpha, n_0 + n_1). \]
By Lemma \ref{Lem:sublinear-estimate}, since $d(x, \overline \Phi \Phi x) \leqslant n_0 \kappa(x)$ for all $x$ in $\alpha$, there are $\mathsf D_1$, $\mathsf D_2$ such that $\mathsf D_1 \kappa(x) \leqslant \kappa (\overline \Phi \Phi x) \leqslant \mathsf D_2 \kappa(x)$ for every $x$ on $\alpha$.
Finally, $\overline \Phi \Phi \alpha$ is a $(L', \theta)$ ray where $L'$ depends on $L$ and $\Phi$. So applying again Equation~\eqref{eq:completion-is-close} and Remark \ref{rem:linear-control}, there exists $n_1'$ such that 
\[ d(\widehat {\overline \Phi \Phi \gamma}(t), \overline \Phi \Phi \gamma(t) ) \leqslant n_1 \theta (\max(\Vert \overline \Phi \Phi \gamma(t) \Vert, \Vert \widehat {\overline \Phi \Phi \gamma}(t) \Vert). \]
Setting $n' = n_0 + n_1'$ finishes the proof.
\end{proof}

There exist several degree of closeness between $\kappa$-rays.
The first is the $\sim$ relation defined earlier.



\begin{definition}[$\kappa$-fellow travelling rays]
Given two rays $\alpha$ and $\beta$ (which are frequently either quasi-geodesic rays or $\theta$-rays in this paper) we say, $\alpha$ and $\beta$ \emph{$\kappa$-fellow travel}  each other if there exists $n$ and for all $t>0$, we have
\[
d(\alpha(t), \beta(t)) \leq n \cdot \kappa(t).
\] 
Further, we say that $\alpha$ and $\beta$ $\kappa$-track each other
if there exists $n_1$ such that 
\[
d(\alpha_r, \beta_r) \leq n_1 \cdot \kappa(r).
\] 
for all $r$.
\end{definition}

Note that if $\alpha$ and $\beta$ $\kappa$-fellow travel each other, then they $\kappa$-track each other, however the converse is not true.

\begin{lemma}
\label{lem:kappa-Morse-transfers-to-fellow-travelling}
Let $\alpha$ and $\alpha'$ be fellow-travelling $(L, \theta)$-rays, and assume that $\alpha$ is $\kappa$-Morse.
Then, $\alpha'$ is $2\kappa + \theta$-Morse.
\end{lemma}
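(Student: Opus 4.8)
The plan is to verify that $\alpha'$ satisfies the alternative characterization of $(2\kappa+\theta)$-Morseness given in Definition~\ref{defn2:Morse}. So let $\beta$ be an arbitrary $(\qq,\sQ)$-quasigeodesic ray that sublinearly tracks $\alpha'$; I must produce a gauge $\mm_{\alpha'}(\qq,\sQ)$, depending only on $\qq,\sQ$ (and on $L$, $\theta$ and the fellow-travelling constant), so that $\beta \subseteq \mathcal N_{2\kappa+\theta}(\alpha', \mm_{\alpha'}(\qq,\sQ))$.

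First I would transfer the hypothesis from $\alpha'$ to $\alpha$. Fellow travelling means there is $n_0$ with $d(\alpha(t),\alpha'(t)) \leqslant n_0\theta(t)$ for all $t$; in particular each ray lies in a sublinear neighbourhood of the other, so $\beta$ sublinearly tracks $\alpha'$ if and only if it sublinearly tracks $\alpha$. Since $\alpha$ is $\kappa$-Morse with some gauge $\mm_\alpha$, Definition~\ref{defn2:Morse} yields $\beta \subseteq \mathcal N_\kappa(\alpha, \mm_\alpha(\qq,\sQ))$, i.e.\ every $x$ on $\beta$ satisfies $d(x,\alpha(s)) \leqslant \mm_\alpha(\qq,\sQ)\,\kappa(\Vert x\Vert)$ for some parameter $s=s(x)$.

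Next I would push each such point across to $\alpha'$. For $x$ on $\beta$ and the corresponding $s$, the triangle inequality gives
\[ d(x,\alpha') \leqslant d(x,\alpha(s)) + d(\alpha(s),\alpha'(s)) \leqslant \mm_\alpha(\qq,\sQ)\,\kappa(\Vert x\Vert) + n_0\,\theta(s). \]
The one genuine difficulty is that the fellow-travelling error $n_0\theta(s)$ is measured at the \emph{parameter} $s$, whereas the target neighbourhood $\mathcal N_{2\kappa+\theta}(\alpha',\cdot)$ is measured against the \emph{norm} $\Vert x\Vert$. To reconcile them I would first use that $d(x,\alpha(s))\leqslant \mm_\alpha\kappa(\Vert x\Vert)$ is sublinear in $\Vert x\Vert$, so by Lemma~\ref{Lem:sublinear-estimate} (or simply the triangle inequality) $\Vert \alpha(s)\Vert$ and $\Vert x\Vert$ are comparable up to a multiplicative constant once $\Vert x\Vert$ is large; then the linear control for $\theta$-rays in Remark~\ref{rem:linear-control} gives $s \asymp \Vert\alpha(s)\Vert$, hence $s \leqslant C\Vert x\Vert$ for a constant $C=C(L)$. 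Finally, concavity of $\theta$ with $\theta\geqslant 1$ gives $\theta(Ct)\leqslant C\theta(t)$, so $\theta(s)\leqslant C\,\theta(\Vert x\Vert)$.

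Combining, $d(x,\alpha') \leqslant \mm_\alpha(\qq,\sQ)\,\kappa(\Vert x\Vert) + n_0 C\,\theta(\Vert x\Vert) \leqslant \mathsf D\,\bigl(2\kappa(\Vert x\Vert)+\theta(\Vert x\Vert)\bigr)$ with $\mathsf D = \max(\tfrac12\mm_\alpha(\qq,\sQ),\, n_0C)$, which depends only on $\qq,\sQ,L,\theta,n_0$. Setting $\mm_{\alpha'}(\qq,\sQ)=\mathsf D$ shows $\beta \subseteq \mathcal N_{2\kappa+\theta}(\alpha',\mm_{\alpha'}(\qq,\sQ))$, as required; and since $2\kappa+\theta$ is again concave, nondecreasing, strictly sublinear and $\geqslant 1$, it is a legitimate function for the $\kappa$-Morse formalism. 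The remaining bookkeeping is routine: the finitely many small values of $s$, for which $x$, $\alpha(s)$ and $\alpha'(s)$ all stay bounded and the estimates are absorbed into the constants because $\kappa,\theta\geqslant 1$, and the passage to a continuous completion of $\alpha'$ (via Lemma~\ref{lem:connect-dots}) if one insists that the relevant subspace be closed.
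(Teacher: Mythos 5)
Your proof is correct and follows essentially the same route as the paper's: verify Definition~\ref{defn2:Morse} for $\alpha'$ by transferring the sublinear tracking hypothesis from $\alpha'$ to $\alpha$, invoking the $\kappa$-Morse property of $\alpha$ to place $\beta$ in $\mathcal N_\kappa(\alpha,\mm_\alpha(\qq,\sQ))$, and then pushing each point back across the fellow-travelling to land in a $(2\kappa+\theta)$-neighbourhood of $\alpha'$. The only difference is one of care rather than of method: the paper's proof is terse and silently conflates distances measured at a parameter $s$ with distances measured at the norm $\Vert x\Vert$, whereas you handle that conversion explicitly via Lemma~\ref{Lem:sublinear-estimate}, Remark~\ref{rem:linear-control} and concavity of $\theta$, which is exactly the bookkeeping the paper omits.
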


\begin{proof}
%
%
%

We will show that the characterization of Definition~\ref{defn2:Morse} holds here.
Let $\beta$ be quasi-geodesic ray that sublinearly tracks $\alpha'$ and let that sublinear tracking function be $\rho(t)$. 
Since $\alpha'$ and $\alpha$
 are $\kappa$-fellow traveling $\theta$-rays, we have that $\beta$ is in a $(\rho+\kappa+\theta)$-neighbourhood of $\alpha$.  Since $\alpha$ is $\kappa$-Morse, by Definition~\ref{defn2:Morse}
 we have that $\beta$ is in a $\kappa$-neighbourhood of $\alpha$. But since $\alpha$ is in a $\kappa+\theta$ neighbourhood of $\alpha'$, we have that $\beta$ is in a $2\kappa+\theta$  neighbourhood of $\alpha'$. Therefore, $\alpha'$ is $2\kappa+\theta$-Morse.
\end{proof}

\section{$\theta$-SBE invariance of the $\kappa$-Morse quasi-geodesic rays, when $\theta = O(\kappa)$}
\label{sec:SBE-Morse-ray-to-Morse-ray}
In this section we establish what happens to $\kappa$-Morse quasi-geodesic rays under $(L, \theta)$ maps. We prove that they behave well in the sense that they are sent to images sublinearly tracking sublinearly Morse geodesic rays, provided that $\kappa$ dominates $\theta$.

Precisely, given two sublinear functions $\kappa, \theta$, we say $\kappa$ \emph{dominates} $\theta$ if there exists constants $C_1, C_2$ and some $t_0$ such that for all $t>t_0$,
\[ \kappa(t) \geq C_1 \theta(t) + C_2.\]
Therefore in the results proven, we show frequently a ray is $(\kappa+\theta)$-Morse, which implies it is $\kappa$-Morse if $\theta \preceq \kappa$ and it is $\theta$-Morse if $\kappa \preceq \theta$.

\subsection{$\kappa$-Morse rays} 

Assume that $\alpha$ is a $\kappa$-Morse $(L, \theta)$-ray in a proper metric space. Then we can establish that a quasi-geodesic ray that tracks $\alpha$ sublinearly is itself $\kappa$-Morse.

\begin{proposition} \label{closeisMorse}
Assume that $\kappa$ dominates $\theta$.
Let $\alpha$ be an $(L, \theta)$-ray that is  $\kappa$-Morse, and let $\beta \sim \alpha$ be a $(\qq, \sQ)$-quasigeodesic ray that tracks $\alpha$ sublinearly. Then $\beta$ is $\kappa$-Morse.
\end{proposition}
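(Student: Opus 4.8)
The plan is to verify the characterization of Definition~\ref{defn2:Morse} for $\beta$ directly. So I would let $\gamma$ be an arbitrary $(\qq', \sQ')$-quasigeodesic ray that sublinearly tracks $\beta$, and aim to produce a gauge $\mm_\beta$ with $\gamma \subseteq \mathcal N_\kappa(\beta, \mm_\beta(\qq',\sQ'))$, which is exactly the assertion that $\beta$ is $\kappa$-Morse. This is the same skeleton as Proposition~\ref{straightandnot}, but with $\alpha$ allowed to be an $(L,\theta)$-ray rather than a quasigeodesic.

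First I would push everything onto $\alpha$. Since $\gamma$ sublinearly tracks $\beta$ and $\beta \sim \alpha$, the triangle inequality (together with the sublinear comparison of $\kappa$-values from Lemma~\ref{Lem:sublinear-estimate}) shows that $\gamma$ sublinearly tracks $\alpha$; as $\alpha$ is $\kappa$-Morse and $\gamma$ is a quasigeodesic, Definition~\ref{defn2:Morse} applied to $\alpha$ gives $\gamma \subseteq \mathcal N_\kappa(\alpha, \mm_\alpha(\qq',\sQ'))$. The same reasoning applied to $\beta$ itself gives $\beta \subseteq \mathcal N_\kappa(\alpha, \mm_\alpha(\qq,\sQ))$. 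Thus both $\gamma$ and $\beta$ lie $\kappa$-close to $\alpha$, and what is missing in order to relate $\gamma$ to $\beta$ is the reverse inclusion $\alpha \subseteq \mathcal N_\kappa(\beta, n)$.

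This reverse inclusion is the crux of the argument, and the place where the fact that $\alpha$ is merely an $(L,\theta)$-ray must be handled with care. By Lemma~\ref{lem:symmetry-basic}, $\alpha$ sublinearly tracks $\beta$, but a priori this only bounds $d(\alpha(s), \beta)$ by an \emph{unspecified} sublinear function of $s$, which is weaker than the $O(\kappa)$ bound I need. To upgrade it, I would study the nearest-point projection $\pi \from \beta \to \alpha$, which is well defined up to bounded ambiguity because $\beta \subseteq \mathcal N_\kappa(\alpha)$. Writing $\pi(\beta(u)) = \alpha(\sigma(u))$, two consecutive integer points of $\beta$ are a bounded distance apart, hence so are their projections onto $\alpha$; the lower bound of the $(L,\theta)$-ray inequality \eqref{eq:kappa-ray}, in the reformulated form of Remark~\ref{rem:linear-control}, then converts this into a bound $|\sigma(u+1)-\sigma(u)| = O(\kappa)$ on the increment of the projection parameter. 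Since $\sigma(u)\to +\infty$ while $\sigma(0)$ is near $0$, a discrete intermediate-value argument shows that every sufficiently large $s$ admits some $u$ with $|\sigma(u)-s| = O(\kappa)$; for that $u$, the estimate $d(\alpha(s),\beta(u)) \leq d(\alpha(s),\alpha(\sigma(u))) + d(\alpha(\sigma(u)),\beta(u))$ bounds both terms by $O(\kappa(\alpha(s)))$, using the hypothesis that $\kappa$ dominates $\theta$ together with Lemma~\ref{Lem:sublinear-estimate}. This establishes $\alpha \subseteq \mathcal N_\kappa(\beta, n)$.

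Finally I would assemble the pieces. Given $\gamma(t)$, I choose $\alpha(s)$ realizing $d(\gamma(t),\alpha)\leq \mm_\alpha(\qq',\sQ')\,\kappa(\gamma(t))$, and then $\beta(u)$ with $d(\alpha(s),\beta(u))\leq n\,\kappa(\alpha(s))$. Since $d(\gamma(t),\alpha(s))$ is $O(\kappa(\gamma(t)))$, Lemma~\ref{Lem:sublinear-estimate} makes $\kappa(\alpha(s))$ comparable to $\kappa(\gamma(t))$, so $d(\gamma(t),\beta) \leq d(\gamma(t),\alpha(s)) + d(\alpha(s),\beta(u)) = O(\kappa(\gamma(t)))$. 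This places $\gamma$ in $\mathcal N_\kappa(\beta, \mm_\beta(\qq',\sQ'))$ for a gauge $\mm_\beta$ assembled from $\mm_\alpha$, $n$, $L$, $\theta$ and the constants of the Sublinear Estimation Lemma, giving the $\kappa$-Morse property of $\beta$. I expect the main obstacle to be the third paragraph: promoting the symmetric but merely sublinear tracking supplied by Lemma~\ref{lem:symmetry-basic} to a genuine $\kappa$-neighbourhood containment, for which the coarse continuity of the projection and the linear control of the $(L,\theta)$-ray are both essential.
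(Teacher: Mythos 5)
Your argument is correct, and its skeleton coincides with the paper's: transfer the tracking of an arbitrary quasi-geodesic $\gamma$ from $\beta$ to $\alpha$, invoke Definition~\ref{defn2:Morse} for $\alpha$ to get $\gamma \subseteq \mathcal{N}_{\kappa}(\alpha, \mm_\alpha(\qq',\sQ'))$ and $\beta \subseteq \mathcal{N}_{\kappa}(\alpha, \mm_\alpha(\qq,\sQ))$, then close with the reverse containment $\alpha \subseteq \mathcal{N}_{\kappa}(\beta, n)$ and a triangle inequality, producing a gauge of the form $\mm_\alpha(\qq',\sQ') + \mathrm{const}$. The genuine difference is at the step you rightly identify as the crux. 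The paper disposes of the reverse containment in one line: for $q$ on $\alpha$ and $s$ a nearest-point projection of $q$ to $\beta$, it asserts $d_X(s,q) \leq \mm_\alpha(\qq,\sQ)\cdot\kappa(\Vert s \Vert)$ ``by the triangle inequality and the Morse property'', a step carried over from the quasi-geodesic setting of \cite[Lemma 3.4]{QRT20}; as written this is essentially the statement being proved, and Lemma~\ref{L:symmetry} cannot be quoted instead because the contained ray $\alpha$ is only an $(L,\theta)$-ray, not a geodesic. Your discrete intermediate-value argument on the projection parameter $\sigma(u)$ supplies exactly the missing justification: increments $\vert\sigma(u+1)-\sigma(u)\vert = O(\kappa)$ via the lower bound in \eqref{eq:kappa-ray} and the hypothesis that $\kappa$ dominates $\theta$, together with $\sigma(0)$ near $0$ and $\sigma(u)\to+\infty$, force every large parameter $s$ to be $O(\kappa)$-close to some $\sigma(u)$. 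What your route buys is a self-contained treatment of the one step the paper leaves implicit, and it is where the lower ray inequality and the domination hypothesis genuinely enter; it also never requires $\alpha$ to be continuous, so the reduction via Lemma~\ref{lem:connect-dots} performed at the beginning and end of the paper's proof becomes unnecessary. One wording slip: the projections $\alpha(\sigma(u))$, $\alpha(\sigma(u+1))$ of consecutive integer points of $\beta$ are not ``a bounded distance apart'' but only $O(\kappa)$ apart, since each $\beta(u)$ is merely $\kappa$-close to $\alpha$; this is harmless, as your next clause uses only the $O(\kappa)$ bound on the parameter increment.
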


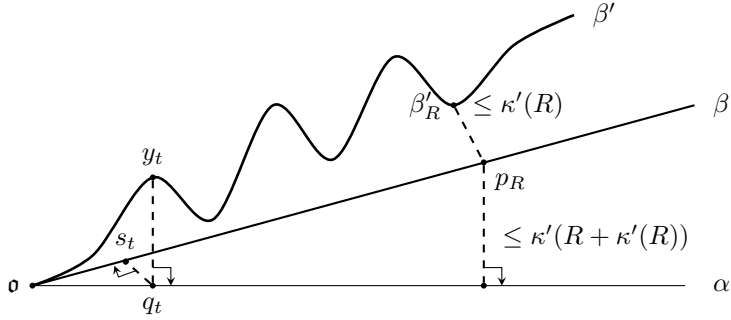
\begin{figure}[h!]
\begin{tikzpicture}[scale=0.8]
 \tikzstyle{vertex} =[circle,draw,fill=black,thick, inner sep=0pt,minimum size=.5 mm]
[thick, 
    scale=1,
    vertex/.style={circle,draw,fill=black,thick,
                   inner sep=0pt,minimum size= .5 mm},
                  
      trans/.style={thick,->, shorten >=6pt,shorten <=6pt,>=stealth},
   ]

  \node[vertex] (o) at (0,0)[label=left:$\go$] {}; 
  \node(a) at (11,0)[label=right:$\alpha$] {}; 
  
  \draw (o)--(a){};
  \draw [-, thick] (o) to (11, 3){};
  \node at (11,3) [label=right:$\beta$] {}; 
  \node at (9,4.5) [label=right:$\beta'$] {};
   
  \node at (7, 3) [label=right:$\leq \kappa'(R)$] {}; 
  \node at (7.5, .8) [label=right:$\leq \kappa'(R + \kappa'(R))$] {}; 
   
  \node[vertex] (a1) at (2, 1.8)[label=above:$y_t$] {}; 
  \node[vertex] (a2) at(2, 0)[label=below:$q_{t}$] {};

  \node[vertex] (a3) at(1.55, 0.4){}; 
   \node[vertex] at(1.55, 0.4)[label=above:$s_{t}$] {}; 
  \node[vertex] (b1) at(7, 3)[label=left:$\beta'_R$] {}; 
  \node[vertex](b2) at (7.5, 2.05)[label=below right:$p_{R}$] {}; 
  \node[vertex] (b3) at(7.5, 0)[label=left:$ $] {}; 
   
  \draw[thick, dashed] (a1)to(a2) {}; 
  \draw[thick, dashed] (a2)to(a3) {}; 
   
  \draw[thick, dashed] (b1) to (b2) {}; 
  \draw[thick, dashed] (b2) to (b3) {}; 
  
  \draw [->] (2,0.3) -- (2.3,0.3) -- (2.3,0);
  \draw [->, shift={(7.5,0)}] (0,0.3) -- (0.3,0.3) -- (0.3,0);
  \draw [->, shift={(1.55,0.4)}, rotate=200] (-0.1,0.2) -- (0.2,0.2) -- (0.2,0);

  \pgfsetlinewidth{1pt}
  \pgfsetplottension{.55}
  \pgfplothandlercurveto
  \pgfplotstreamstart
  \pgfplotstreampoint{\pgfpoint{0cm}{0cm}}
  \pgfplotstreampoint{\pgfpoint{1cm}{0.5cm}}
  \pgfplotstreampoint{\pgfpoint{2cm}{1.8cm}}
  \pgfplotstreampoint{\pgfpoint{3cm}{1.1cm}}
  \pgfplotstreampoint{\pgfpoint{4cm}{3cm}}
  \pgfplotstreampoint{\pgfpoint{5cm}{2.1cm}}
  \pgfplotstreampoint{\pgfpoint{6cm}{3.8cm}}
  \pgfplotstreampoint{\pgfpoint{7cm}{3cm}}
  \pgfplotstreampoint{\pgfpoint{8cm}{4cm}}
  \pgfplotstreampoint{\pgfpoint{9cm}{4.5cm}}
  \pgfplotstreamend
  \pgfusepath{stroke} 
  \end{tikzpicture}
  \caption{The setup in the proof of Proposition~\ref{closeisMorse}.}
\end{figure}


\begin{proof}
The proof is similar to that of \cite[Lemma 3.4]{QRT20} where $\beta$ is also $(\mathsf q, \mathsf Q)$-quasi-geodesic but $\alpha$ is a $\kappa$-Morse quasi-geodesic instead of a $\kappa$-Morse $(L,\theta)$-ray.
First, assume that $\alpha$ is continuous. By an identical proof of \cite[Lemma III.1.11]{BH99}, an $(L, \theta)$-ray $\alpha$ can always be made to be continuous 
as an $(L, 2\theta)$-ray, thus we assume without loss of generality from now on that $\alpha$ is continuous.
 
 Define $\kappa'(r) := d_X(\beta_r, \alpha)$.
 By Definition~\ref{defn:sim} and Lemma~\ref{lem:symmetry-basic}, the function $\kappa'$ is sublinear.
Let us now prove that $\beta$ is $\kappa$-Morse. 
Let $r > 0$ and let $\beta'$ be a $(\qq', \sQ')$-quasi-geodesic ray such that 
\[ d_X(\beta'_R, \beta) \leq \kappa'(R) \]
for some sufficiently large $R$. 
Let $p_R$ be a nearest point projection 
of $\beta'_R$ to $\beta$; by construction and by triangle inequality, we have 
$$\Vert p_R \Vert \leq 2 \Vert \beta_R' \Vert = 2 R.$$
Then, by the triangle inequality, 
\begin{align*}
d_X(\beta'_R, \alpha) & \leq d_X(\beta'_R, p_R) + d_X(p_R, \alpha) \\
& \leq  \kappa'(R) + \mm_\alpha(\qq, \sQ)\cdot \kappa (\Vert p_R \Vert) \, \, \, \,  \text{$\beta$ is a quasi-geodesic ray and hence}\\
& \hspace{5cm} \text{is in a $\kappa$-neighbourhood of $\alpha$.}\\
& \leq \kappa'(R) + \mm_\alpha(\qq, \sQ)\cdot \kappa(2 R).
\end{align*}
Since $\kappa''(R) := \kappa'(R) + m(\qq, \sQ)\cdot \kappa(2 R)$ is also a sublinear function, and since $\alpha$ is $\kappa$-Morse, 
this implies that 
\begin{equation}
\beta'|_{r} \subseteq \mathcal{N}_{\kappa}(\alpha, \mm_\alpha(\qq', \sQ')). 
\end{equation}

Let $y_t$ be any point on $\beta'$ with $\Vert y_t \Vert = t \leq r$. 
By construction and triangle inequality, if $q_t$ is a nearest point projection of $y_t$ to $\alpha$, we have 
 $$\Vert q_t \Vert \leq 2 \Vert y_t \Vert = 2 t. $$
Now, if $q$ is any point on $\alpha$ and $s$ is a nearest point projection of $q$ to $\beta$,  by the triangle inequality and the Morse property,
$$\Vert q \Vert \geq \Vert s \Vert - d_X(s, q) \geq  \Vert s \Vert - \mm_\alpha(\qq, \sQ)\cdot \kappa(\Vert s \Vert).$$
Moreover, again by construction and triangle inequality, $\Vert s \Vert \leq 2 \Vert q \Vert$,
hence by concavity
$$d_X(s, q) \leq \mm_\alpha(\qq, \sQ)\cdot \kappa(\Vert s \Vert) \leq  2 \mm_\alpha(\qq, \sQ)\cdot \kappa (\Vert q \Vert).$$
Thus, let $s_t$ be a nearest point projection of $q_t$ to $\beta$, the above estimate yields
\begin{align*}
d_X(q_t, s_t) & \leq 2 \mm_\alpha(\qq, \sQ)\cdot \kappa (\Vert q_t \Vert) \\
& \leq 4 \mm_\alpha(\qq, \sQ)\cdot \kappa (t)
\end{align*}
hence, putting everything together, 
\begin{align*}
d_X(y_t, \beta) & \leq d_X(y_t, q_t) + d_X(q_t, s_t) \\
& \leq \mm_\alpha(\qq', \sQ')\cdot \kappa(t) +  4 \mm_\alpha(\qq, \sQ)\cdot \kappa (t)
\end{align*}
which, by setting $\mm_\beta(\qq', \sQ') := \mm_\alpha(\qq', \sQ')+ 4 \mm_\alpha(\qq, \sQ)$, 
proves the claim.
Finally, $\alpha$ may not be continuous.
In the case where it is not, using the notation from Lemma \ref{lem:connect-dots}, we have to check that $\widehat \alpha$ inherits the $\kappa$-Morse property, which follows readily from Lemma \ref{lem:kappa-Morse-transfers-to-fellow-travelling}.
\end{proof}

Building on these results we are now ready to establish the map on $\pka X$ that is induced by an SBE.
\begin{lemma}\label{lemma1}
If $\alpha$ is a  (\qq, \sQ)-$\kappa$-Morse quasi-geodesic ray in $X$, and $\Phi$ is an $(L, \theta)$-sublinear bi-Lipschitz equivalence between two proper metric spaces $X$ and $Y$.  Then $\overline \Phi\Phi \alpha$ is $(\kappa+\theta)$-Morse.
\end{lemma}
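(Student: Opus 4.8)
The plan is to view $\overline\Phi\Phi\alpha$ as a sublinear ray inside $X$ that stays pointwise $\theta$-close to $\alpha$, and then to quote the transfer of the Morse property along fellow-travelling rays proved in \lemref{lem:kappa-Morse-transfers-to-fellow-travelling}. Since $\alpha$ is a $(\qq,\sQ)$-quasi-geodesic and $\theta\geqslant 1$, we have $\sQ\leqslant\sQ\,\theta$, so $\alpha$ is in particular an $(L_0,\theta)$-ray for a suitable $L_0$ (with $\sQ\,\theta=O(\theta)$), and its image is $\kappa$-Morse by hypothesis. As the Morse property is a property of the image subspace and not of the chosen parametrisation, $\alpha$ provides precisely the $\kappa$-Morse $(L_0,\theta)$-ray required as input to that lemma.

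First I would check that $\overline\Phi\Phi\alpha$ is again an $(L',\theta)$-ray, with $L'$ depending on $L$ and $\Phi$; this is already contained in the proof of the lemma immediately preceding the present statement, but it is short to redo directly. Writing $\Psi=\overline\Phi\Phi$ as the composition of the $(L,\theta)$-SBE $\Phi$ with the $(L,\theta)$-SBE $\overline\Phi$ furnished by \propref{prop:inverses-SBE}, one applies the two inequalities of \defref{defn:theta-SBE} in turn to $\alpha$ to obtain $\tfrac{1}{L'}\lvert s-t\rvert - O(\theta)\leqslant d(\Psi\alpha(s),\Psi\alpha(t))\leqslant L'\lvert s-t\rvert + O(\theta)$. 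The only care needed is that the additive errors appear naturally as functions of the norms $\Vert\alpha(s)\Vert,\Vert\alpha(t)\Vert$; by \remref{rem:linear-control} these norms are comparable to the parameters $s,t$, so by concavity of $\theta$ together with \lemref{Lem:sublinear-estimate} they may be replaced by $O(\theta(\max(s,t)))$. The requirement $\Psi\alpha(0)=\mathfrak o$ holds only up to the constant $d(\mathfrak o,\Psi(\mathfrak o))\leqslant n\,\theta(0)$, which is absorbed into $\theta$ exactly as explained after \defref{dfn:theta-ray}.

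Next I would record the pointwise fellow-travelling. By \eqnref{eq:first-prop-inverse-SBE} of \propref{prop:inverses-SBE}, for every $t$ we have $d\bigl(\alpha(t),\overline\Phi\Phi\alpha(t)\bigr)=d\bigl(\alpha(t),\overline\Phi\Phi(\alpha(t))\bigr)\leqslant n\,\theta(\Vert\alpha(t)\Vert)$, and once more, since $\Vert\alpha(t)\Vert$ is comparable to $t$, the right-hand side is at most $N\theta(t)$ for a uniform $N$. Hence $\alpha$ and $\overline\Phi\Phi\alpha$ $\theta$-fellow travel. After enlarging the constants to a common pair $(L,\theta)$, the two are fellow-travelling $(L,\theta)$-rays with $\alpha$ being $\kappa$-Morse, so \lemref{lem:kappa-Morse-transfers-to-fellow-travelling} applies and yields that $\overline\Phi\Phi\alpha$ is $2\kappa+\theta$-Morse. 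Finally, from $\kappa+\theta\leqslant 2\kappa+\theta\leqslant 2(\kappa+\theta)$ and the identity $\mathcal N_{2(\kappa+\theta)}(Z,D)=\mathcal N_{\kappa+\theta}(Z,2D)$, being $2\kappa+\theta$-Morse is the same as being $(\kappa+\theta)$-Morse after doubling the gauge, which is the assertion.

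The argument is thus short once \lemref{lem:kappa-Morse-transfers-to-fellow-travelling} is in place; the only delicate point is the sublinear bookkeeping—tracking which constants depend on $\Phi$ and which only on $(L,\theta)$, and justifying each passage from $\theta(\Vert\alpha(t)\Vert)$ to $O(\theta(t))$ via \remref{rem:linear-control} and \lemref{Lem:sublinear-estimate}—together with spelling out the final step so that the gauge $2\kappa+\theta$ is honestly converted to $\kappa+\theta$.
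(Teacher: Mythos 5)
Your proposal is correct, and it reaches the conclusion through a different citation path than the paper, although the underlying mechanism is the same. The paper verifies Definition~\ref{defn2:Morse} for $\overline \Phi \Phi \alpha$ directly: given a quasi-geodesic $\gamma$ sublinearly tracking $\overline \Phi \Phi \alpha$, it uses Proposition~\ref{prop:inverses-SBE} to see that $\gamma$ also sublinearly tracks $\alpha$, applies the $\kappa$-Morse property of $\alpha$ (quoting Proposition~\ref{closeisMorse}) to get $\gamma \subset \mathcal{N}_\kappa(\alpha, m)$, and then pushes back across the pointwise $\theta$-closeness to land in a $(\kappa+\theta)$-neighbourhood of $\overline \Phi \Phi \alpha$; in particular the paper never needs $\overline \Phi \Phi \alpha$ to be a ray, only a closed subset. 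You instead first establish that $\overline \Phi \Phi \alpha$ is an $(L',\theta)$-ray which $\theta$-fellow travels $\alpha$, and then quote Lemma~\ref{lem:kappa-Morse-transfers-to-fellow-travelling}; since the proof of that lemma is exactly the same shuttle argument (track $\gamma$ over to $\alpha$, apply $\alpha$'s Morse property, transfer back), the substance coincides, and the price of the black-boxing is the verification of the ray property plus the gauge conversion $2\kappa+\theta \to \kappa+\theta$, both of which you carry out correctly. One point to flag: as its proof shows, Lemma~\ref{lem:kappa-Morse-transfers-to-fellow-travelling} is really about $\kappa$-fellow-travelling rays, whereas what you produce is $\theta$-fellow-travelling; passing from the latter to the former uses $\theta=O(\kappa)$, i.e.\ the standing assumption of Section~\ref{sec:SBE-Morse-ray-to-Morse-ray} that $\kappa$ dominates $\theta$ (alternatively, run the lemma with $\kappa+\theta$ in place of $\kappa$, which is legitimate since $\alpha$ is a fortiori $(\kappa+\theta)$-Morse). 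This is no worse than the paper's own proof, which also imports the domination hypothesis through its appeal to Proposition~\ref{closeisMorse}, but it deserves to be said explicitly since the statement of Lemma~\ref{lemma1} itself does not include that hypothesis.
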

\begin{proof}
Consider $\alpha, \overline \Phi\Phi \alpha$. By Proposition~\ref{prop:inverses-SBE},  every point $x$ of $\alpha$ is $\theta(x)$ away from the point $\overline \Phi\Phi (x)$.  
Consider a $(\qq, \sQ)$-quasi-geodesic $\gamma$ that is sublinearly tracking 
$\overline\Phi \Phi \alpha$. 
Let this tracking function be denoted $\kappa'$. 

Then, there exists a constant $C$ such that any point  $y$ on $\gamma$ is distance at most $C(\kappa'(y)+\theta(y))$ from $\alpha$. 
Indeed, $d(y, \overline \Phi \Phi \alpha_{\Vert y \Vert}) \leqslant \kappa'(y)$ and then by Proposition \ref{prop:inverses-SBE}, $d(y, \alpha) \leqslant \kappa'(y) + \widehat \theta (y)$ where $\widehat{\theta} \leqslant C \theta$ for some $C$ by \eqref{eq:kappa-ray-reformulated}.

Since $\alpha$ is $\kappa$-Morse and $C(\kappa'(y)+\theta(y))$ is a sublinear function, by Proposition \ref{closeisMorse} we have that $\gamma$ is in a $\kappa$-neighbourhod of $\alpha$ and $\gamma$ is $\kappa$-Morse.

Therefore we have that $\gamma$ is in $\calN_\kappa(\alpha, m)$ for some $m$. Since $\alpha$ is $\theta$-tracking  $\overline \Phi\Phi \alpha$, we have that $\gamma$ is $m \cdot \kappa+\theta$ tracking  $\overline \Phi\Phi \alpha$. Thus we have shown that any quasi-geodesic ray that sublinearly tracks $\overline \Phi\Phi \alpha$ with rate $m \cdot \kappa+\theta$. Thus  $\overline \Phi\Phi \alpha$ is $(\kappa+\theta)$-Morse.  
\end{proof}

\begin{figure}[h!]
\begin{tikzpicture}[scale=0.8]
\draw [color=black, line width=1pt,domain=0:5, samples = 100] plot({\x,0.5*\x+0.1*\x*sin(10*\x r)}) node[anchor=south]{$\alpha$};
\draw [dash pattern= on 3pt off 2pt, color=black, line width=1pt,domain=0:5, samples = 100] plot({\x,0.1*\x*sin(7*\x r)}) node[anchor=west]{$\gamma$};

%
%
%

\draw [color=black, line width=1pt,domain=0:5.5, samples = 100, variable=\t] plot({\t-0.1*\t*cos(5*\t r)},{0.2*\t+(0.08*\t+0.01*\t*\t)*sin(5*\t r)}) node[anchor=west]{$\overline \Phi \Phi \alpha$};

%
%
%
%

\end{tikzpicture}
\caption{Proof of Lemma~\ref{lemma1}.}
\end{figure}

%
%
%
%
%
%
%
%
%
%
%
%
%
%
%
%

\begin{proposition}\label{imageMorse}
If $\alpha$ is a  (\qq, \sQ) $\kappa$-Morse quasi-geodesic ray in $X$, and $\Phi$ is an $(L, \theta)$-sublinear bi-Lipschitz equivalence between two proper metric spaces $X$ and $Y$. Then $\Phi\alpha$ is $(\kappa+\theta)$-Morse.
\end{proposition}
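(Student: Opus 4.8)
The plan is to verify the characterization of Definition~\ref{defn2:Morse} for the image ray, using the inverse SBE from Proposition~\ref{prop:inverses-SBE} to transport the problem back into $X$, where Lemma~\ref{lemma1} already provides a $(\kappa+\theta)$-Morse comparison ray. First I would record that $\Phi\alpha$ is itself a sublinear ray: composing the $(\qq,\sQ)$-quasi-geodesic $\alpha$ with the $(L,\theta)$-SBE $\Phi$ and using the linear control $\Vert\alpha(t)\Vert\asymp t$ from Remark~\ref{rem:linear-control}, one gets that $\Phi\alpha$ is an $(L\qq,\theta')$-ray with $\theta'=O(\theta)$ (the constant $L\sQ$ being absorbed since $\theta\geqslant 1$). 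Thus it makes sense to test it against quasi-geodesics as in Definition~\ref{defn2:Morse}.

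So let $\beta$ be an arbitrary $(\qq,\sQ)$-quasi-geodesic ray in $Y$ that sublinearly tracks $\Phi\alpha$, with sublinear tracking function $\rho$; the goal is $\beta\subseteq\mathcal N_{\kappa+\theta}(\Phi\alpha,\mm)$ for a suitable gauge $\mm$. I would push $\beta$ back by $\overline\Phi$. For each $t$, choosing $s$ with $d(\beta(t),\Phi\alpha(s))\leqslant 2\rho(t)$ and applying the right-hand inequality of Definition~\ref{defn:theta-SBE} to $\overline\Phi$ gives $d(\overline\Phi\beta(t),\overline\Phi\Phi\alpha(s))\leqslant 2L\rho(t)+\theta(\cdot)$, so that $\overline\Phi\beta$ is a $\theta$-ray sublinearly tracking $\overline\Phi\Phi\alpha$, and hence (by the estimate $d(x,\overline\Phi\Phi x)\leqslant n\theta(x)$ of Proposition~\ref{prop:inverses-SBE}) tracking $\alpha$ itself. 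Granting that $\overline\Phi\beta$ then lies in $\mathcal N_{\kappa+\theta}(\overline\Phi\Phi\alpha,\cdot)$, I would push forward by $\Phi$: the right-hand SBE inequality turns this neighbourhood bound into $d(\Phi\overline\Phi\beta(t),\Phi\overline\Phi\Phi\alpha)\leqslant L(\kappa+\theta)(\cdot)+\theta(\cdot)$, after which the near-identity bounds $d(\Phi\overline\Phi y,y)\leqslant n\theta(y)$ and $d(\Phi\overline\Phi\Phi\alpha,\Phi\alpha)=O(\theta)$ from Proposition~\ref{prop:inverses-SBE}, together with Lemma~\ref{Lem:sublinear-estimate} to pass between the scales $\Vert\beta(t)\Vert$ and $\Vert\overline\Phi\beta(t)\Vert$, convert the estimate into $\beta\subseteq\mathcal N_{\kappa+\theta}(\Phi\alpha,\mm)$.

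The main obstacle is the step I granted, namely controlling the \emph{sublinear ray} $\overline\Phi\beta$ against the Morse ray: both Definition~\ref{defn2:Morse} and Proposition~\ref{closeisMorse} are phrased with a quasi-geodesic as the test curve, whereas the pullback of a quasi-geodesic by a genuine SBE is only a $\theta$-ray, and it need not fellow-travel nor even track any quasi-geodesic (so Lemma~\ref{lem:kappa-Morse-transfers-to-fellow-travelling} does not apply directly). This asymmetry is precisely why the proof of Lemma~\ref{lemma1}, in which the test curve stays in $X$ and remains a quasi-geodesic, does not transfer verbatim. To close the gap I would prove the auxiliary statement that, when $\theta$ is dominated by $\kappa$, a $(L,\theta)$-ray sublinearly tracking a $\kappa$-Morse ray is contained in a $(\kappa+\theta)$-neighbourhood of it, adapting the nearest-point-projection estimates in the proof of Proposition~\ref{closeisMorse} so that the coarse-Lipschitz upper bound \eqref{eq:kappa-ray} of the $\theta$-ray plays the role previously played by the quasi-geodesic bound. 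Here the domination hypothesis is exactly what allows the accumulated $\theta$-error to be absorbed into $\kappa$ rather than inflating the gauge with the scale; establishing this absorption is the technical heart, and the remainder is the bookkeeping of sublinear error through $\Phi$ and $\overline\Phi$ outlined above.
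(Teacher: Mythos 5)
Your proposal follows the same route as the paper's proof of Proposition~\ref{imageMorse}: take a quasi-geodesic $\gamma$ in $Y$ that sublinearly tracks $\Phi\alpha$, pull it back by the inverse SBE $\overline\Phi$ of Proposition~\ref{prop:inverses-SBE}, compare the pulled-back curve with $\alpha$ (or with $\overline\Phi\Phi\alpha$, via Lemma~\ref{lemma1}) inside $X$, then push the resulting bound forward by $\Phi$ and absorb the $O(\theta)$ errors using $d(x,\overline\Phi\Phi x)\leqslant n\,\theta(x)$ and Lemma~\ref{Lem:sublinear-estimate}. This is, step for step, the paper's scheme. The only divergence is at the step you granted: there the paper asserts that since $\overline\Phi\gamma$ is $(2\theta+\kappa')$-tracking $\alpha$, ``by Proposition~\ref{closeisMorse}, $\overline\Phi\gamma$ is $\kappa$-Morse'', and then tests the quasi-geodesic $\alpha$ against it to conclude that $\alpha$ and $\overline\Phi\gamma$ are $\kappa$-close. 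Your reservation about exactly this move is well founded: in Proposition~\ref{closeisMorse}, as in Definitions~\ref{defn:Morse} and~\ref{defn2:Morse}, the tracking curve must be a quasi-geodesic, whereas $\overline\Phi\gamma$ is only an $(L\qq,O(\theta))$-ray. So the paper's citation is an implicit extension of that proposition to $\theta$-ray test curves, and your write-up is the same proof with that hidden lemma made explicit; you have not missed anything that the paper actually supplies.

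However, your plan for proving the auxiliary lemma mislocates the difficulty, so as described it would not close the gap. In the proof of Proposition~\ref{closeisMorse}, the quasi-geodesicity of the tracking curve $\beta$ is not consumed by the nearest-point-projection estimates or by any coarse-Lipschitz upper bound (those parts do adapt to $\theta$-rays, with $\kappa$ degrading to $\kappa+\theta$); it is consumed in the single line ``$\beta$ is a quasi-geodesic ray and hence is in a $\kappa$-neighbourhood of $\alpha$'', i.e.\ in invoking the $\kappa$-Morse property of $\alpha$, which by definition quantifies only over quasi-geodesic test curves. For your auxiliary statement that invocation \emph{is} the assertion being proved, so it is unavailable, and substituting the bound \eqref{eq:kappa-ray} for quasi-geodesicity does not repair this circularity. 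Nor does cutting $\overline\Phi\gamma$ into windows on which it is an honest quasi-geodesic: on the window $[0,T]$ the additive constant is $\theta(T)$, so the gauge $\mm_\alpha(L,\theta(T))$ and the smallness requirement $\mm_\alpha(L,\theta(T))\ll_\kappa r$ blow up as $T$ grows, while Definition~\ref{defn:Morse} forces $T$ to be large compared with $R(\alpha,r,n,\kappa')$ --- the two demands clash. The workable route is to pass through the $\kappa$-weakly-contracting characterization of $\kappa$-Morse quasi-geodesic rays of \cite{QRT19,QRT20}: weak contraction constrains projections of arbitrary points rather than quasi-geodesic test curves, so a tracking theorem for $(L,\theta)$-rays against weakly contracting sets can be proved in the style of \cite[Theorem A.1]{QRT20}, and one then returns to the Morse setting. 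That extra input is what both your sketch and, strictly speaking, the paper's own one-line citation are standing on.
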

\begin{proof}
Consider a $(\qq, \sQ)$-quasi-geodesic ray $\gamma$ that sublinearly tracks $\Phi\alpha$ and let the tracking function be $\kappa'$. Consider the image  $\overline \Phi\gamma$. $\overline \Phi\gamma$ is distance $\theta + \kappa'$ from $\overline \Phi\Phi \alpha$.  Since $\overline \Phi\Phi \alpha$ is $\theta$-tracking $\alpha$ we have that $\overline \Phi\gamma$ is 
$2\theta + \kappa'$-tracking $\alpha$. By Proposition~\ref{closeisMorse},  $\overline \Phi\gamma$ is $\kappa$-Morse.  Therefore, since $\alpha$ is a quasi-geodesic that sublinearly tracks  $\overline \Phi\gamma$, we have that $\alpha$ and  $\overline \Phi\gamma$ are $\kappa$-close.

Now apply $\Phi$ to both $\alpha$ and  $\overline \Phi\gamma$. $\Phi \alpha$ and  $\Phi \overline \Phi\gamma$ are $\kappa+ \theta$ apart. Since $\Phi \overline \Phi\gamma$ and $\gamma$ are $\theta$ apart, then we have that $\Phi \alpha$
and $\gamma$ are at most $\kappa+ 2\theta$ apart. This holds for every $\gamma$ and thus we have that $\Phi \alpha$ is $(\kappa+\theta)$-Morse. \qedhere
\end{proof} 

These two results guarantee that under a sublinear biLipschitz equivalence, the image of a $\kappa$-Morse quasi-geodesic ray is an $(L, \theta)$-ray that carries the $\kappa$-Morse property, i.e. any quasi-geodesic ray that sublinearly tracks this set tracks it with a uniformly controlled sublinear function. The next two results shows that the latter set, i.e. a $\kappa$-Morse  $(L, \theta)$-ray  still has strong association with a geodesic ray and thus can be connected with a $\kappa$-Morse equivalence class in $\pka X$ in Section~\ref{sectionboundary}.

\begin{lemma}\label{uniquegeodesic}
Let $\alpha$ be a $\kappa$-Morse $(L, \theta)$-ray. Then there exists a geodesic ray $\underline{a}$ such that $\underline{a}$ and $\alpha$ $\kappa$-track each other.
\end{lemma}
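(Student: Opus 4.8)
The plan is to realize $\underline a$ as a limit of geodesic segments joining $\mathfrak o$ to points far out on $\alpha$, to establish the two inclusions $\underline a \subset \mathcal N_\kappa(\alpha, \cdot)$ and $\alpha \subset \mathcal N_\kappa(\underline a, \cdot)$, and then to deduce the tracking estimate formally from the second inclusion together with the fact that $\underline a$ is a geodesic issued from $\mathfrak o$.

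First I would fix a sequence $t_n \to +\infty$ and let $\sigma_n$ be a geodesic from $\mathfrak o = \alpha(0)$ to $\alpha(t_n)$, parametrised by arclength. Since $X$ is proper, Arzel\`a--Ascoli produces a subsequence of the $\sigma_n$ converging uniformly on compact sets to a geodesic ray $\underline a$ issued from $\mathfrak o$. Because both endpoints of each $\sigma_n$ lie on $\alpha$, the segment $\sigma_n$ is a $(1,0)$-quasigeodesic whose terminal point sits on $\alpha$; feeding it into the $\kappa$-Morse property of $\alpha$ (Definition~\ref{defn:Morse}) gives $\sigma_n \subset \mathcal N_\kappa(\alpha, \mm_\alpha(1,0))$ with a constant independent of $n$, and passing to the limit yields $\underline a \subset \mathcal N_\kappa(\alpha, \mm_\alpha(1,0))$. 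In particular $d(\underline a(t), \alpha) \leqslant \mm_\alpha(1,0)\kappa(t) = o(t)$, so $\underline a \sim \alpha$ in the sense of Definition~\ref{defn:sim}, and Lemma~\ref{lem:symmetry-basic} upgrades this to $\alpha \sim \underline a$ as well. As $\underline a$ is a geodesic---hence quasi-geodesic---ray that is $\sim \alpha$ and tracks $\alpha$ sublinearly, while $\alpha$ is a $\kappa$-Morse $(L,\theta)$-ray, Proposition~\ref{closeisMorse} shows that $\underline a$ is itself $\kappa$-Morse.

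The hard part is the reverse inclusion $\alpha \subset \mathcal N_\kappa(\underline a, m')$. The naive move---plugging $\alpha$ into the $\kappa$-Morse property of $\underline a$---is unavailable, precisely because $\alpha$ is only an $(L,\theta)$-ray and not a genuine quasi-geodesic, and restricting it to bounded radii turns it into a quasi-geodesic whose additive constant grows like $\theta$, so the resulting Morse gauge would not stay uniform. To get around this I would argue through the geodesics $\tau_s = [\mathfrak o, \alpha(s)]$: since $\alpha \sim \underline a$, the terminal point $\alpha(s)$ of $\tau_s$ is sublinearly close to $\underline a$, so the $\kappa$-Morse property of $\underline a$ gives $\tau_s \subset \mathcal N_\kappa(\underline a, \mm_{\underline a}(1,0))$ with a constant independent of $s$. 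It then suffices to prove a sublinear stability statement for $\alpha$: that there is a uniform $C$ with $d(\alpha(s'), \tau_s) \leqslant C\,\kappa(\Vert \alpha(s')\Vert)$ for all $s' \leqslant s$, i.e.\ the $\kappa$-Morse ray $\alpha$ stays $\kappa$-close to the geodesics joining $\mathfrak o$ to its own points. Combining the two containments and letting $s \to +\infty$ would give $\alpha \subset \mathcal N_\kappa(\underline a, m')$. I expect this stability step to be the technical heart of the argument: I would establish it by a nearest-point-projection argument in the spirit of Lemma~\ref{lem:symmetry-basic}, but quantified so as to produce the sharp gauge $\kappa$ rather than an unspecified sublinear function, exploiting that we already control $\tau_s \subset \mathcal N_\kappa(\alpha, \mm_\alpha(1,0))$ and that $\alpha$ obeys the linear radius control of Remark~\ref{rem:linear-control}.

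Finally, from $\alpha \subset \mathcal N_\kappa(\underline a, m')$ the tracking estimate is immediate. Writing $\alpha_r = \alpha(s_r)$ for the point of $\alpha$ on the sphere of radius $r$, the inclusion gives a point $\underline a(r')$ with $d(\alpha_r, \underline a(r')) \leqslant m'\kappa(r)$; since $\Vert \alpha_r\Vert = r$ we have $\vert r' - r\vert \leqslant m'\kappa(r)$, and because $\underline a$ is a geodesic issued from $\mathfrak o$ the point $\underline a_r = \underline a(r)$ satisfies $d(\underline a(r'), \underline a_r) = \vert r' - r\vert$. Hence $d(\alpha_r, \underline a_r) \leqslant 2m'\kappa(r)$ for all $r$, which is exactly the assertion that $\underline a$ and $\alpha$ $\kappa$-track each other.
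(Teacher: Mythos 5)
Your construction and first containment coincide with the paper's entire proof: the paper likewise takes the geodesic segments $[\go,\alpha(i)]$, feeds them into the $\kappa$-Morse property of $\alpha$ (their endpoints lie on $\alpha$, so the gauge $\mm_\alpha(1,0)$ is uniform), extracts a limit ray $\underline a$ by Arzel\`a--Ascoli, and concludes $\underline a\subset\calN_\kappa(\alpha,\mm_\alpha(1,0))$ --- and at that point the paper simply \emph{asserts} the tracking conclusion. You are right that the definition of $\kappa$-tracking (closeness of $\alpha_r$ and $\underline a_r$ at matching radii) needs more than this one-sided containment; your observation that one cannot feed $\alpha$ itself into the Morse property of $\underline a$ is correct; and your final paragraph, converting the reverse containment into tracking via $d(\underline a(r'),\underline a(r))=\vert r'-r\vert$, is sound and is exactly the mechanism used in the proof of Lemma~\ref{L:symmetry}.

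The genuine gap is the deferred ``stability'' estimate $d(\alpha(s'),[\go,\alpha(s)])\leqslant C\,\kappa(\Vert\alpha(s')\Vert)$, which carries the whole weight of the reverse containment and is not a routine quantification of Lemma~\ref{lem:symmetry-basic}: sublinear-neighbourhood containments are not symmetric (a path can wander far from the geodesic joining its endpoints while that geodesic stays close to the path), so the claim is false without the Morse property of $\alpha$, and nearest-point-projection arguments in the style of Lemma~\ref{lem:symmetry-basic} only return an uncontrolled sublinear function, never the gauge $\kappa$. What fills the hole is a connectedness argument: for $s'\leqslant s$, the portion of $\tau_s=[\go,\alpha(s)]$ inside the ball of radius $r$ comparable to $L^2\Vert\alpha(s')\Vert$ lies in $\calN_\kappa(\alpha,\mm_\alpha(1,0))$ for $s$ large (Morse property of $\alpha$), hence this connected arc is covered by the two closed sets of points $\mm_\alpha(1,0)\kappa$-close to $\alpha([0,s'])$ and to $\alpha([s',\infty))$; both sets meet the arc (the first contains $\go$, the second contains its endpoint by Remark~\ref{rem:linear-control}), so some $p\in\tau_s$ is close to points $\alpha(t_1),\alpha(t_2)$ with $t_1\leqslant s'\leqslant t_2$, and the left inequality of \eqref{eq:kappa-ray} turns $d(\alpha(t_1),\alpha(t_2))\leqslant 2\mm_\alpha(1,0)\kappa(p)$ into $t_2-t_1=O(\kappa(p)+\theta(t_2))$, whence $d(\alpha(s'),\tau_s)=O\bigl((\kappa+\theta)(\Vert\alpha(s')\Vert)\bigr)$ after the scale conversions of Remark~\ref{rem:linear-control} and Lemma~\ref{Lem:sublinear-estimate}. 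Letting $s\to\infty$ along the Arzel\`a--Ascoli subsequence transfers this to $\underline a$ directly, so your detour through Proposition~\ref{closeisMorse} to make $\underline a$ itself $\kappa$-Morse is not even needed. Note finally that this argument outputs the gauge $\kappa+\theta$ rather than $\kappa$: the clean statement, like your invocation of Proposition~\ref{closeisMorse} (whose hypotheses include domination), silently uses $\theta\preceq\kappa$, the standing assumption of Section~\ref{sec:SBE-Morse-ray-to-Morse-ray} that does not appear in the lemma as stated.
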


\begin{figure}[h!]
\begin{tikzpicture}[scale=0.8]
\draw [color=black, line width=1pt,domain=0:5.5, samples = 100, variable=\t] plot({\t-0.1*\t*cos(5*\t r)},{0.2*\t+(0.08*\t+0.01*\t*\t)*sin(5*\t r)}) node[anchor=west]{$\alpha$};

\draw [color=black, line width=0.5pt,domain=0:5.5, samples = 100, variable=\t] plot({\t},{0.01*\t*\t}) node[anchor=west]{$\underline a$};

\fill [color=black] (2.2,0.2) circle(1.5pt) node[ below] {\small $\alpha(i_1)$};

\fill [color=black] (3.5,0.3) circle(1.5pt) node[ below] {\small $\alpha(i_2)$};

\fill [color=black] (4.8,0.4) circle(1.5pt) node[ below] {\small $\alpha(i_3)$};

\end{tikzpicture}
\caption{Proof of Lemma~\ref{uniquegeodesic}.}
\end{figure}

\begin{proof}
Since $\alpha$ is $\kappa$-Morse, let $\mm_\alpha(\qq, \sQ)$ be its $\kappa$-Morse gauges. 
Consider the geodesic segments connecting the base-point and $\alpha(i)$, $i=1,2,3\ldots$. 

Since $\alpha$ is $\kappa$-Morse, for every $r>0$, there exists $i_r$ such that
\[
\alpha(i_r) \in \calN_1(\alpha, 1) \, \Longrightarrow \, [\go, \alpha(r)] \in  \calN_\kappa(\alpha, m(1,0)).
\]
The space $X$ is assumed to be proper. 
Thus by the Arzel\'a-Ascoli Theorem, for some extraction $\{ i_n \}$, the subsequence $\{ [\go, \alpha(i_n)], n=1,2,3\ldots \}$ converges to a geodesic ray $\underline{a}$. 

For each $n$, all but the first $n-1$ segments in this sequence has the property that the intersection of them with the ball of radius $i_n$ is in the neighbourhood $\calN_\kappa(\alpha, m(1,0))$, thus their limit $\underline{a}$ is in the neighbourhood  $\calN_\kappa(\alpha, m(1,0))$.  
Thus $\underline{a}$ and $\alpha$ $\kappa$-track each other.
\end{proof}

Note that we used the $\kappa$-Morse property in an essential way.

When $X$ is Gromov hyperbolic, Lemma~\ref{uniquegeodesic} is a consequence of \cite[Lemma 3.4]{pallierHYPSBE}.

\begin{proposition}\label{Morseray}
If $\alpha$ is an $(L, \theta)$-ray that is $\kappa$-Morse, then $\alpha$ is in a $\kappa$-neighbourhood of a unique $\kappa$-Morse geodesic ray, up to sublinear tracking. Thus a $\kappa$-Morse $(L, \theta)$-ray is associated with a unique equivalence class $\bfa \in \pka X$.
\end{proposition}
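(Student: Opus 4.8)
The plan is to assemble \propref{Morseray} directly from the preceding results, treating it as a packaging statement rather than a genuinely new argument. The existence half is essentially \lemref{uniquegeodesic}: given a $\kappa$-Morse $(L,\theta)$-ray $\alpha$, that lemma already produces a geodesic ray $\underline{a}$ which $\kappa$-tracks $\alpha$. So the first step is simply to invoke \lemref{uniquegeodesic} and record that $\alpha \in \calN_\kappa(\underline{a}, m)$ for some $m$, which gives the neighbourhood statement and in particular $\alpha \sim \underline{a}$.

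Next I would upgrade $\underline{a}$ from ``a geodesic ray that $\kappa$-tracks $\alpha$'' to ``a $\kappa$-Morse geodesic ray,'' since the statement promises the limiting object itself carries the Morse property. Because $\underline{a}$ is in particular a $(1,0)$-quasi-geodesic ray that sublinearly tracks the $\kappa$-Morse ray $\alpha$, \propref{closeisMorse} (with $\beta = \underline{a}$) applies and yields that $\underline{a}$ is $\kappa$-Morse. This step is where one must be slightly careful about hypotheses: \propref{closeisMorse} requires $\kappa$ to dominate $\theta$, so I would either assume this standing hypothesis (as in \secref{sec:SBE-Morse-ray-to-Morse-ray}) or note that a geodesic has $\theta = 0$ so the domination is automatic on the $\underline{a}$ side.

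The genuinely new content, and what I expect to be the main obstacle, is \emph{uniqueness} up to sublinear tracking: if $\underline{a}$ and $\underline{b}$ are two $\kappa$-Morse geodesic rays each $\kappa$-tracking $\alpha$, they must $\kappa$-track one another. The clean way to do this is to transfer through $\alpha$: both $\underline{a} \sim \alpha$ and $\underline{b} \sim \alpha$, and by the equivalence and sublinearity packaged in \lemref{lem:symmetry-basic} and \defref{defn:sim} one gets $\underline{a} \sim \underline{b}$. Then, since $\underline{b}$ is a geodesic (hence quasi-geodesic) ray that sublinearly tracks the $\kappa$-Morse ray $\underline{a}$, \propref{closeisMorse} (or \propref{straightandnot}) forces $\underline{b} \in \calN_\kappa(\underline{a}, m')$, and symmetrically, so the two rays $\kappa$-track each other. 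I would present uniqueness as: any two such geodesic rays are $\sim$-equivalent, and two $\sim$-equivalent $\kappa$-Morse geodesic rays $\kappa$-track each other, the latter being exactly the content of the Morse property applied in both directions.

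Finally I would record the consequence for the boundary. The relation $\sim$ is an equivalence relation on $\kappa$-Morse rays (by \lemref{lem:symmetry-basic}), and $\underline{a}$ is a $\kappa$-Morse quasi-geodesic ray, hence determines a class $\bfa \in \pka X$ by the definition in \secref{subsec:dfn-pka-boundaries}. Since $\alpha \sim \underline{a}$ and, by uniqueness, any geodesic representative lies in the same $\sim$-class, the class $\bfa$ is well defined independently of the choices made in \lemref{uniquegeodesic}. This yields the claimed association of a $\kappa$-Morse $(L,\theta)$-ray $\alpha$ with a unique equivalence class $\bfa \in \pka X$, completing the proof.
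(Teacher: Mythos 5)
Your proposal is correct and follows essentially the same route as the paper: existence via Lemma~\ref{uniquegeodesic}, the $\kappa$-Morse property of the limiting geodesic via Proposition~\ref{closeisMorse}, and uniqueness by observing that any two $\kappa$-Morse geodesic rays tracking $\alpha$ must $\kappa$-track each other and hence determine the same class in $\partial_\kappa X$. One small caveat: the domination hypothesis in Proposition~\ref{closeisMorse} concerns the function $\theta$ of the ray $\alpha$, not of the geodesic $\underline{a}$, so your parenthetical fallback (``a geodesic has $\theta=0$'') does not discharge it --- only the standing assumption $\theta = O(\kappa)$ of Section~\ref{sec:SBE-Morse-ray-to-Morse-ray} does.
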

\begin{proof}
By Lemma~\ref{uniquegeodesic}, there exists a geodesic ray $\underline{a}$ such that $\underline{a}$ and $\alpha$ $\kappa$-tracks each other. Then Proposition~\ref{closeisMorse} implies that $\underline{a}$ is $\kappa$-Morse. Since every $\kappa$-Morse geodesic ray is in a unique equivalence class $\bfa \in \pka X$, we get that $\alpha$ is associated with  $\bfa \in \pka X$. Suppose  $\alpha$ is in a $\kappa$-neighbourhood of another $\kappa$-Morse geodesic ray $\underline{a}'$, we have that $\underline{a}'$ and $\underline{a}$ $\kappa$-track each other and thus $\underline{a}' \in \bfa$. 
\end{proof}

\section{SBE on sublinearly Morse boundaries}\label{sectionboundary}

\subsection{Proof of Theorem \ref{th:invariant-topology-intro}}
In this section we look at the induced map of a given SBE $\Phi \colon X \to Y$ on $\pka X$. 
To begin with, we need the following basic observation about $(L, \theta)$-ray and geodesics that are in sublinear neighbourhoods of each other. The proof is identical to the analogous claim in \cite{QRT20} regarding quasi-geodesic rays and geodesic rays:

\begin{lemma}\label{L:symmetry} 
Let $\beta$ be a $(L, \theta)$-ray and $\underline{a}$ be a geodesic ray, both based at $\go \in X$. Suppose 
that 
\[ \beta \subseteq \mathcal{N}_{\kappa}(\underline{a}, m) \]
for some function $\kappa$ and some constant $m$. 
Then we also have
\[ \underline{a} \subseteq \mathcal{N}_{\kappa+\theta}(\beta , m')\]
for some constant $m'$. Especially, if $\kappa$ dominates $\theta$, then for some $m'$
\[ \underline{a} \subseteq \mathcal{N}_{\kappa}(\beta , m').\]
\end{lemma}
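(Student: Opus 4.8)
The plan is to prove the containment pointwise: I fix a point $\underline{a}(t)$ of the geodesic ray with $t$ large and produce a point of $\beta$ within $m'(\kappa+\theta)(t)$ of it. The geometric heart of the matter is that $\underline{a}$, being a unit-speed geodesic based at $\go$, satisfies $\Vert\underline{a}(t)\Vert = t$ and $d(\underline{a}(t),\underline{a}(t')) = \lvert t-t'\rvert$; thus parameter differences along $\underline{a}$ are \emph{equal} to distances, and conversely differences of distances to $\go$ are controlled by $d$ through the reverse triangle inequality. This is exactly what upgrades the one-sided hypothesis $\beta\subseteq\mathcal N_\kappa(\underline{a},m)$ into control in the other direction.

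First I would reduce to $\beta$ continuous by replacing it with a continuous completion $\widehat\beta$ as in Lemma~\ref{lem:connect-dots}; this is an $(L,n\theta)$-ray with $d(\beta(s),\widehat\beta(s))\leqslant n\theta(s)$ for all $s$, and it is precisely this replacement that is responsible for the extra additive term $\theta$ in the conclusion. Since $\widehat\beta$ is continuous with $\widehat\beta(0)=\go$ and $\Vert\widehat\beta(s)\Vert\to\infty$ (by the linear lower bound in Remark~\ref{rem:linear-control}), the intermediate value theorem produces for each large $t$ a parameter $s$ with $\Vert\widehat\beta(s)\Vert = t$; set $x=\widehat\beta(s)$. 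I would then bound $d(x,\underline{a})$ in terms of $t$: combining $d(x,\beta(s))\leqslant n\theta(s)$ with the hypothesis $d(\beta(s),\underline{a})\leqslant m\kappa(\beta(s))$ gives $d(x,\underline{a})\leqslant n\theta(s)+m\kappa(\beta(s))$. The two-sided linear control of Remark~\ref{rem:linear-control} gives $s\asymp t$, so concavity bounds $\theta(s)$ by a multiple of $\theta(t)$; and since $\Vert\beta(s)\Vert$ differs from $\Vert x\Vert = t$ by at most $n\theta(s)$, hence lies below $2t$ for $t$ large, the Sublinear Estimation Lemma~\ref{Lem:sublinear-estimate} bounds $\kappa(\beta(s))$ by a multiple of $\kappa(t)$. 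Altogether $d(x,\underline{a})\leqslant C(\kappa+\theta)(t)$ for a uniform constant $C$.

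Finally I would transfer this estimate from $x$ back to $\underline{a}(t)$ and then to the original ray. Choosing $\underline{a}(t')$ a nearest point of $\underline{a}$ to $x$ (the image of $\underline{a}$ is closed in the proper space $X$, so the infimum is attained), the reverse triangle inequality yields
\[
\lvert t-t'\rvert = \bigl\lvert \Vert x\Vert - \Vert\underline{a}(t')\Vert\bigr\rvert \leqslant d\bigl(x,\underline{a}(t')\bigr) = d(x,\underline{a}) \leqslant C(\kappa+\theta)(t),
\]
whence $d(\underline{a}(t),\underline{a}(t')) = \lvert t-t'\rvert \leqslant C(\kappa+\theta)(t)$ and therefore $d(\underline{a}(t),x)\leqslant 2C(\kappa+\theta)(t)$. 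Adding $d(x,\beta(s))\leqslant n\theta(s)$ exhibits a point $\beta(s)$ of the original ray with $d(\underline{a}(t),\beta(s))\leqslant m'(\kappa+\theta)(t)$, which is the assertion $\underline{a}\subseteq\mathcal N_{\kappa+\theta}(\beta,m')$. The ``especially'' clause is then immediate: if $\kappa$ dominates $\theta$ then $\kappa+\theta\preceq\kappa$, so after adjusting the constant $\mathcal N_{\kappa+\theta}(\beta,m')\subseteq\mathcal N_{\kappa}(\beta,m'')$.

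I expect the main obstacle to be the bookkeeping of sublinear constants, namely ensuring that every occurrence of $\kappa$ and $\theta$ is ultimately expressed in terms of $t=\Vert\underline{a}(t)\Vert$ rather than the auxiliary parameter $s$ or the shifted norm $\Vert\beta(s)\Vert$; here the Sublinear Estimation Lemma together with the linear two-sided control of Remark~\ref{rem:linear-control} makes the passage routine. The only genuinely new ingredient compared to the quasi-geodesic case of \cite{QRT20} is the detour through the continuous completion $\widehat\beta$, and this detour is exactly what produces the $\theta$ appearing in $\mathcal N_{\kappa+\theta}$.
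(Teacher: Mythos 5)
Your proposal is correct and takes essentially the same route as the paper's own proof: pick a point of (the continuous completion of) $\beta$ at the same norm $t$ as the target point of $\underline{a}$, bound its distance to $\underline{a}$ by the hypothesis, and transfer the bound back via a nearest-point projection onto $\underline{a}$ together with the reverse triangle inequality and the fact that norms equal parameters along a geodesic ray based at $\go$ — with the detour through the completion of Lemma~\ref{lem:connect-dots} producing the extra $\theta$ term in both arguments. The only differences are organizational (the paper handles the continuous case first and appends the completion step at the end, whereas you reduce to the completion up front) and your slightly more explicit sublinear bookkeeping; note that where you invoke the Sublinear Estimation Lemma, plain monotonicity and concavity of $\kappa$ already suffice, as your own bound $\Vert\beta(s)\Vert\leqslant 2t$ shows.
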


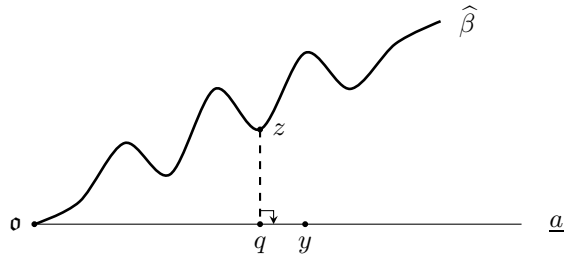
\begin{figure}[h!]
\begin{tikzpicture}[scale=0.6]
 \tikzstyle{vertex} =[circle,draw,fill=black,thick, inner sep=0pt,minimum size=.5 mm]
[thick, 
    scale=1,
    vertex/.style={circle,draw,fill=black,thick,
                   inner sep=0pt,minimum size= .5 mm},
                  
      trans/.style={thick,->, shorten >=6pt,shorten <=6pt,>=stealth},
   ]

  \node[vertex] (o) at (0,0)[label=left:$\go$] {}; 
  \node(a) at (11,0)[label=right:$\underline{a}$] {}; 
  
  \draw (o)--(a){};
  \node at (9,4.5) [label=right:$\widehat \beta$] {};
   
  \draw[thick, dashed] (5, 2.1) to(5, 0) {}; 
  \node[vertex] at(6, 0)[label=below:$y$] {}; 
  \node[vertex] at(5, 0)[label=below:$q$] {}; 
  \node[vertex] at(5, 2.1)[label=right:$z$] {}; 
  
  \draw [->] (5,0.3) -- (5.3,0.3) -- (5.3,0);

  \pgfsetlinewidth{1pt}
  \pgfsetplottension{.55}
  \pgfplothandlercurveto
  \pgfplotstreamstart
  \pgfplotstreampoint{\pgfpoint{0cm}{0cm}}
  \pgfplotstreampoint{\pgfpoint{1cm}{0.5cm}}
  \pgfplotstreampoint{\pgfpoint{2cm}{1.8cm}}
  \pgfplotstreampoint{\pgfpoint{3cm}{1.1cm}}
  \pgfplotstreampoint{\pgfpoint{4cm}{3cm}}
  \pgfplotstreampoint{\pgfpoint{5cm}{2.1cm}}
  \pgfplotstreampoint{\pgfpoint{6cm}{3.8cm}}
  \pgfplotstreampoint{\pgfpoint{7cm}{3cm}}
  \pgfplotstreampoint{\pgfpoint{8cm}{4cm}}
  \pgfplotstreampoint{\pgfpoint{9cm}{4.5cm}}
  \pgfplotstreamend
  \pgfusepath{stroke} 
\end{tikzpicture}
\caption{$\Norm y = \Norm z$ and $q \in \pi_{\underline{a}} (z)$ as in the proof of Lemma~\ref{L:symmetry}. }
\end{figure}

\begin{proof}
The core of the proof is the same as that of \cite[Lemma 3.1]{QRT20}; we reproduce it here for completeness.
First, assume for simplicity that $\beta$ is a continuous $\theta$-ray.
Let $y \in \underline{\alpha}$ be a point and let $r : = \Norm {y}$. Let $z \in \beta$ be a point such that $\Vert z \Vert = r$
and let $q$ be a nearest point projection of $z$ to $\underline{a}$. 
By assumption, 
\[
d_X(z, q) \leq m \cdot \kappa(r).
\] 
On the other hand,
\begin{align*}
d_X(y, q) & = | \Vert y \Vert- \Norm q | & \text{since $\underline{a}$ is geodesic} \\
 	   & = | \Vert z  \Vert - \Norm q | & \\
           & \leq d_X(z, q) & \text{by the triangle inequality}.
\end{align*}
Therefore we have
\[
d_X(y, \beta) \leq d_X(y, z) \leq d_X(y, q) + d_X(q, z)\leq 2 d_X(z, q) \leq 2 m \cdot \kappa(r)
\]
which completes the proof in this case. Note that we can take $m'=2m$.
Now, $\beta$ may not be continuous. Nevertheless, by Lemma \ref{lem:connect-dots} there is a continuous $\theta$-ray $\widehat \beta$ that $\theta$-fellow travels with $\beta$.
Keeping $y$ as before, and applying the previous argument to $\widehat{\beta}$, there is $z \in\widehat \beta$ with $d(y,z) \leqslant 2 m \cdot \kappa(r)$; but then also, $d(z,\beta) \leqslant m_1\theta(r)$,and then $d(y,\beta) \leqslant 2m \kappa(r)+m_1\theta(r) \leqslant m' (\theta(r) +\kappa(r))$, as required.
\end{proof}

Let $\Phi \colon X \to Y$ be a $(L, \theta)$-SBE; we are now ready to define the induced map $\Phi_\star$.   

Given a $(\mathsf q, \mathsf Q)$-quasi-geodesic ray $\zeta \from [0, \infty) \to X$ in $X$, we assume without loss of generality that the image of $\zeta$ is a continuous path. Let 
$\Phi \zeta$ be the $(L \mathsf q, \theta)$-ray in $Y$ constructed from the composition of 
$\zeta$ and $\Phi$.

\begin{proposition}\label{lem1}
Assume that $\kappa$ dominates $\theta$, and let $\Phi$ be a $(L, \theta)$-sublinear bi-Lipschitz equivalence from $X$ to $Y$, where $X$ and $Y$ are proper geodesic metric spaces. 
Two $\kappa$-Morse quasi-geodesics $\alpha$ and $\beta$ in $X$ $\kappa$-fellow travel each other if and only if $\Phi \alpha$ and $\Phi \beta$ $\kappa$-fellow travel each other in $Y$.
\end{proposition}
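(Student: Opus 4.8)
The plan is to establish the statement by combining the distance estimates for $\Phi$ and its coarse inverse $\overline\Phi$ from Proposition~\ref{prop:inverses-SBE} with the Sublinear Estimation Lemma to control how $\kappa$ transforms under $\Phi$. First I would prove the forward implication. Suppose $\alpha$ and $\beta$ $\kappa$-fellow travel, so there is a constant $n$ with $d(\alpha(t),\beta(t)) \leqslant n\cdot\kappa(t)$ for all $t$. The subtlety is that $\Phi\alpha$ and $\Phi\beta$ are reparametrized by $\|\Phi\alpha\|$ and $\|\Phi\beta\|$ rather than by the domain parameter $t$, so I must first compare the point $\Phi\alpha(t)$ with $\Phi\beta(t)$ and then reconcile the parametrizations. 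Applying the upper inequality in Definition~\ref{defn:theta-SBE} directly gives
\[
d(\Phi\alpha(t), \Phi\beta(t)) \leqslant L\, d(\alpha(t),\beta(t)) + \theta(\max(\|\alpha(t)\|,\|\beta(t)\|)) \leqslant Ln\cdot\kappa(t) + \theta(2Lt),
\]
using Remark~\ref{rem:linear-control} to bound $\|\alpha(t)\|,\|\beta(t)\| \leqslant 2Lt$. Since $\kappa$ dominates $\theta$ and both are concave nondecreasing, the right-hand side is bounded by $n'\kappa(t)$ for some $n'$; and again by Remark~\ref{rem:linear-control} applied to $\Phi\alpha$ and $\Phi\beta$ (which are $(L\qq,\theta)$-rays), $\kappa(t)$ is comparable to $\kappa(\|\Phi\alpha(t)\|)$ via the Sublinear Estimation Lemma~\ref{Lem:sublinear-estimate}. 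This yields that $\Phi\alpha$ and $\Phi\beta$ $\kappa$-fellow travel in the sense required, after accounting for the shift between the domain parameter and the norm parameter.

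For the converse, I would use the coarse inverse $\overline\Phi$ from Proposition~\ref{prop:inverses-SBE}. If $\Phi\alpha$ and $\Phi\beta$ $\kappa$-fellow travel in $Y$, then applying $\overline\Phi$ and the same estimate (now for the $\theta$-SBE $\overline\Phi$) gives that $\overline\Phi\Phi\alpha$ and $\overline\Phi\Phi\beta$ $\kappa$-fellow travel in $X$. By \eqref{eq:first-prop-inverse-SBE}, each point $\alpha(t)$ is within $n\cdot\theta(t)$ of $\overline\Phi\Phi\alpha(t)$, and likewise for $\beta$; since $\theta$ is dominated by $\kappa$, the triangle inequality then transfers the $\kappa$-fellow travelling from $\overline\Phi\Phi\alpha,\overline\Phi\Phi\beta$ back to $\alpha,\beta$. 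This symmetric structure means the two directions are genuinely the same argument applied to $\Phi$ and to $\overline\Phi$ respectively.

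The main obstacle I anticipate is bookkeeping the reparametrization: $\kappa$-fellow travelling is stated in terms of $d(\alpha(t),\beta(t))$ at equal domain parameter $t$, but $\Phi$ does not preserve the parameter, so I must argue that the domain parameter and the norm $\|\cdot\|$ are linearly comparable (Remark~\ref{rem:linear-control}) and that a sublinear discrepancy between the two parametrizations of a single ray can be absorbed into the $\kappa$-fellow travelling constant. Concretely, if $\Phi\alpha$ is reparametrized so that the point at norm $t$ is compared with the point of $\Phi\beta$ at the same norm, one must check that moving from ``same domain parameter'' to ``same norm'' costs only a further $O(\kappa(t))$ error; this follows because along a single $(L,\theta)$-ray, a norm difference of $O(\kappa(t))$ corresponds to a parameter difference of $O(L\kappa(t))$ by the lower bound in \eqref{eq:kappa-ray-reformulated}, and displacing the parameter by that amount moves the point by at most a comparable sublinear quantity. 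Once this comparison is set up cleanly, the estimates above close the argument in both directions.
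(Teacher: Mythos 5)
Your proposal is correct and follows essentially the same route as the paper's own proof: the forward direction by applying the defining inequality of the $(L,\theta)$-SBE and absorbing $\theta$ into $\kappa$ via domination, and the converse by running the same estimate for the coarse inverse $\overline\Phi$ of Proposition~\ref{prop:inverses-SBE} and then transferring back to $\alpha,\beta$ through the $O(\theta)$-closeness of $\alpha$ to $\overline\Phi\Phi\alpha$ and the triangle inequality. Your explicit treatment of the discrepancy between the domain parameter and the norm parametrization is a point the paper's proof glosses over (it passes silently between fellow-travelling and tracking ``at radius $r$''), so that extra bookkeeping is a welcome refinement rather than a departure.
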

\begin{proof}
If $\alpha$ and $\beta$ in $X$ $\kappa$-fellow travel each other with multiplicative constant $n$, then at radius $r$, the distances between points of $\Phi \alpha$ and $\Phi \beta$ is  apart by
\[L(n \cdot \kappa(r)) + \theta(r) \leq  (L n + 1) \kappa(r)
\]
thus  $\Phi \alpha$ and $\Phi \beta$ $\kappa$-fellow travel each other in $Y$. By Proposition~\ref{prop:inverses-SBE}, there exists an inverse, $\overline \Phi$, that is also an $(L, \theta)$-SBE. Thus we have that if $\Phi \alpha$ and $\Phi \beta$ are $n' \cdot \kappa(r)$-tracking, then $\overline \Phi\Phi \alpha$ and $\overline \Phi\Phi \beta$ are $(L n' + 1) \kappa(r)$-tracking each other by the preceding argument. Furthermore, $\alpha, \beta$ are $\theta(r)$ tracking  $\overline \Phi\Phi \alpha$ and $\overline \Phi\Phi \beta$, respectively. Thus $\alpha, \beta$ are tracking each other with distance at most
\[
 (L n' + 1) \kappa(r) + \theta(r) + \theta(r) \leq  (L n' + 3) \kappa(r). \qedhere
\]
\end{proof}

\begin{definition}\label{defn-induced}
It follows from Proposition~\ref{lem1}
 that two quasi-geodesics $\zeta$ and $\xi$ in $X$ $\kappa$--fellow 
travel each other if and only if $\Phi \zeta$ and $\Phi \xi$ $\kappa$-fellow travel each other in $Y$. 
Also by  Corollary~\ref{imageMorse}, the property of being in Morse-neighbourhood of a Morse geodesic ray is preserved under an $(L, \theta)$-SBE. Hence, 
$[\zeta] \in  \partial_\kappa X$ if and only if $[\Phi \zeta] \in \partial_\kappa Y$.  We write $\bfa = [\zeta]$ and $\bfb = [\Phi \zeta]$ and we thus define 
\[
\Phi_\star (\bfa) =\bfb.
\]
\end{definition}


\begin{theorem}\label{invarianttopology}
Consider proper geodesic metric spaces $X$ and $Y$, let $\Phi \from X \to Y$ be a 
$(L, \theta)$-sublinear bi-Lipschitz equivalence between $X$ and $Y$ and let $\overline \Phi$ denote its inverse as in Proposition~\ref{prop:inverses-SBE}. Let $\Phi_\star$ be defined by Definition~\ref{defn-induced}. Then 
for every sublinear function $\kappa$ that dominates $\theta$, $\Phi_\star \from \partial_\kappa X \to \partial_\kappa Y$ is a homeomorphism. 
\end{theorem}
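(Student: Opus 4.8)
The plan is to construct a two-sided inverse for $\Phi_\star$ and then establish continuity of both maps; since the inverse will be the map $(\overline\Phi)_\star$ induced by the inverse SBE $\overline\Phi$ of Proposition~\ref{prop:inverses-SBE}, and $\overline\Phi$ is again an $(L,\theta)$-SBE, its continuity will follow from the very same argument applied to $\overline\Phi$ in place of $\Phi$. A continuous bijection whose inverse is continuous is a homeomorphism, so this suffices.

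First I would record that $\overline\Phi$ induces a map $(\overline\Phi)_\star \from \partial_\kappa Y \to \partial_\kappa X$ by Definition~\ref{defn-induced} (via Propositions~\ref{imageMorse} and \ref{Morseray}), and that it is inverse to $\Phi_\star$. Indeed, for a $\kappa$-Morse quasi-geodesic $\alpha$ representing $\mathbf a \in \partial_\kappa X$, Proposition~\ref{prop:inverses-SBE} gives $d(x, \overline\Phi\Phi x) \leqslant n\,\theta(x)$, so $\overline\Phi\Phi\alpha$ sublinearly tracks $\alpha$ and $\overline\Phi\Phi\alpha \sim \alpha$; combined with Lemma~\ref{lemma1} and Proposition~\ref{Morseray} this yields $(\overline\Phi)_\star(\Phi_\star(\mathbf a)) = \mathbf a$, and the symmetric computation using \eqref{eq:second-prop-inverse-SBE} gives $\Phi_\star((\overline\Phi)_\star(\mathbf b)) = \mathbf b$. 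Thus $\Phi_\star$ is a bijection; its injectivity and well-definedness are in any case already contained in Definition~\ref{defn-induced}.

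The core of the argument is continuity of $\Phi_\star$, which I would verify on the neighbourhood bases $\partial\mathcal U$. Fix $\mathbf b \in \partial_\kappa X$ with geodesic representative $\underline b$; by Propositions~\ref{imageMorse} and \ref{Morseray} its image $\Phi\underline b$ is a $\kappa$-Morse $(L,\theta)$-ray that $\kappa$-tracks a geodesic representative $\underline{b'}$ of $\mathbf b' := \Phi_\star(\mathbf b)$. Given a basic neighbourhood $\partial\mathcal U(\underline{b'}, R)$ of $\mathbf b'$, I would find $r$ so that $\Phi_\star(\partial\mathcal U(\underline b, r)) \subseteq \partial\mathcal U(\underline{b'}, R)$. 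The key metric input is that the lower inequality defining a $\theta$-SBE forces any $t$ with $\Phi\alpha(t) \in B(\go, R) \subseteq Y$ to satisfy $\Norm{\alpha(t)} \leqslant LR + L\theta(\Norm{\alpha(t)})$, which by strict sublinearity of $\theta$ is bounded by some $r = r(R,L,\theta)$; hence controlling $\alpha$ on the ball of radius $r$ in $X$ controls $\Phi\alpha$ on the ball of radius $R$ in $Y$. For $\mathbf a \in \partial\mathcal U(\underline b, r)$ with quasi-geodesic representative $\alpha$, the hypothesis $\alpha|_r \subseteq \mathcal N_\kappa(\underline b, \mm_{\underline b}(\qq,\sQ))$ transports under $\Phi$ — using the upper Lipschitz bound together with the Sublinear Estimation Lemma~\ref{Lem:sublinear-estimate} to pass between $\kappa(\Norm{\alpha(t)})$ and $\kappa(\Norm{\Phi\alpha(t)})$ — to the statement that the part of $\Phi\alpha$ inside $B(\go, R)$ lies in $\mathcal N_\kappa(\Phi\underline b, m_1)$. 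Combining this with the $\kappa$-tracking of $\Phi\underline b$ by $\underline{b'}$, and then passing from the $\theta$-ray $\Phi\alpha$ to the geodesic representative $\underline{a'}$ of $\Phi_\star(\mathbf a)$ via Proposition~\ref{Morseray} and Lemma~\ref{L:symmetry}, would show that $\Phi_\star(\mathbf a)$ and $\mathbf b'$ have representatives that $\kappa$-fellow travel throughout $B(\go, R)$; the $\kappa$-Morse property of $\underline{b'}$ (Definition~\ref{defn:Morse}) then converts this into the membership $\Phi_\star(\mathbf a) \in \partial\mathcal U(\underline{b'}, R)$.

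I expect the main obstacle to be the uniform bookkeeping of the Morse gauges. One must ensure that the gauge $m_1$ above — and the gauge eventually governing $\underline{a'}$ — depends only on $\underline b$, $\Phi$, $L$, $\theta$ and $\kappa$, and not on the quasi-geodesic constants $(\qq,\sQ)$ of the individual representative $\alpha$ in a way that would break the threshold condition $R \gg_\kappa \mm_{\underline{b'}}(\qq',\sQ')$ built into the definition of $\partial\mathcal U$. This is precisely what Proposition~\ref{imageMorse} supplies, by bounding the image Morse gauge in terms of the source Morse gauge, while the concavity of $\kappa$ (used as in Proposition~\ref{closeisMorse}) keeps all additive sublinear errors absorbed into a single multiple of $\kappa$; here the hypothesis that $\kappa$ dominates $\theta$ is essential, since it is what lets $(\kappa+\theta)$-control collapse to $\kappa$-control throughout. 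Once continuity of $\Phi_\star$ is in hand, the identical argument run with $\overline\Phi$ yields continuity of $(\overline\Phi)_\star = (\Phi_\star)^{-1}$, and we conclude that $\Phi_\star$ is a homeomorphism.
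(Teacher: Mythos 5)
Your proposal is correct in substance and follows essentially the same route as the paper, just run in the mirror-image direction: you prove continuity of $\Phi_\star$ by pushing neighbourhood-containment forward along $\Phi$ and bootstrapping with the Morse property of $\underline{b'}$ in $Y$, whereas the paper proves continuity of $(\Phi_\star)^{-1}$ by pulling containment back along $\Phi$ (via the lower SBE inequality) and bootstrapping with the Morse property of $b_X$ in $X$; since each argument then invokes symmetry ($\overline\Phi$ being again an $(L,\theta)$-SBE) for the opposite direction, the two proofs consist of the same two halves with the same ingredients: Definition~\ref{defn-induced} for bijectivity, the SBE inequalities together with Lemma~\ref{Lem:sublinear-estimate} to transport $\kappa$-neighbourhoods, Proposition~\ref{Morseray} and Lemma~\ref{L:symmetry} to pass to geodesic representatives, and the Definition~\ref{defn:Morse} bootstrap.

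One point needs repair in your write-up: the order of quantifiers in the bootstrap. Definition~\ref{defn:Morse} converts sublinear closeness at the \emph{larger} radius $R(Z,r,n,\kappa')$ into gauge-containment up to the \emph{smaller} radius $r$. As sketched, you define $r = r(R,L,\theta)$ directly from the target radius $R$ and establish containment in $\calN_\kappa(\underline{b'}, m_3)$ inside $B(\go,R)$ with $m_3$ typically larger than the gauge $\mm_{\underline{b'}}$; applying Definition~\ref{defn:Morse} to this then only yields $\Phi_\star(\mathbf a) \in \partial\calU(\underline{b'}, R_0)$ for some $R_0 < R$, not $\partial\calU(\underline{b'}, R)$ as you claim. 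The fix is exactly the paper's bookkeeping: first choose the bootstrap radius from Definition~\ref{defn:Morse} for the target radius (the paper's $\sR = \sR(b_X,\rr,\nn,\kappa)$, which is legitimate because the constant $\nn$ depends only on the gauges, $L$, $\theta$, $\kappa$ and not on any radius, so there is no circularity), and only afterwards define the transported radius (the paper's $\rr' \geqslant L\sR + \kappa(r)$). Alternatively, your argument as written still proves continuity once one observes that landing in $\partial\calU(\underline{b'},R_0)$ for $R_0$ arbitrarily large suffices, these sets forming a neighbourhood basis at $\mathbf b'$; either repair is routine, so this is a slip rather than a gap.
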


\begin{proof}

By Definition~\ref{defn-induced}, $\Phi_\star$ defined as above gives a bijection between $\partial_\kappa X$ 
and $\partial_\kappa Y$. We need to show that $(\Phi_\star)^{-1}$ is continuous. Then, the 
same argument applied in the other direction will show that $\Phi_\star$ is also continuous 
which means $\Phi_\star$ is a homeomorphism. 

Let $\calV$ be an open set in $\pka X$, $\bfb_X \in \calV$ and 
$\calU_{\kappa}(\bfb_X, \rr)$ be a neighbourhood of $\bfb_X$ that is contained in $\calV$. 
Let $\bfb_Y = \Phi_\star(\bfb_X)$. We need to show that there is a constant $\rr'$ 
such that, for every point $\bfa_Y \in \calU_\kappa(\bfb_Y, \rr')$, we have 
\[
(\Phi_\star)^{-1}(\bfa_Y) = \bfa_X \in \calU_\kappa(\bfb_X, \rr).
\]


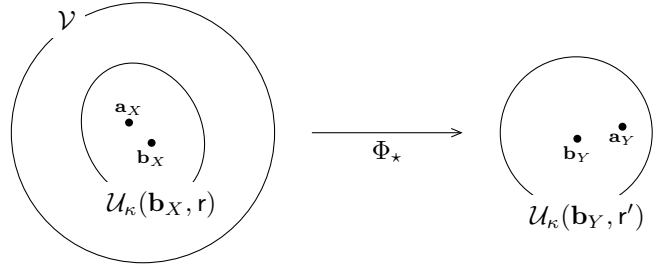
\begin{figure}[h]
\begin{tikzpicture}[line cap=round,line join=round,>=angle 45,x=0.5cm,y=0.5cm]
\clip(-3,-4) rectangle (17,4);
\draw [rotate around={-63.43:(2.5,0)}] (2.5,0) ellipse (0.96cm and 0.78cm);
\draw [rotate around={0:(2.5,0)}] (2.5,0) ellipse (1.75cm and 1.73cm);
\draw(14.02,0) circle (1.01cm);
\draw (-0.02,3.5) node[stuff_fill,anchor=north west] {$\mathcal V $};
\draw (1.31,-1.31) node[stuff_fill,anchor=north west] {$ \mathcal U_{\kappa}(\mathbf b_X, \mathsf r)  $};
\draw (12.55,-1.65) node[stuff_fill,anchor=north west] {$ \mathcal U_{\kappa}(\mathbf b_Y, \mathsf r') $};
\begin{scriptsize}
\fill [color=black] (2.73,-0.27) circle (1.5pt) node[anchor=north] {$\mathbf b_X$} ;
\fill [color=black] (2.13,0.27) circle (1.5pt)  node[anchor=south] {$\mathbf a_X$} ;
\fill [color=black] (14.05,-0.16) circle (1.5pt)  node[anchor=north] {$\mathbf b_Y$} ;
\fill [color=black] (15.25,0.16) circle (1.5pt)  node[anchor=north] {$\mathbf a_Y$} ;
\end{scriptsize}
\draw [color = black,->] (7,0) -- (11,0);
\draw [color = black] (9,-0.5) node[anchor=center] {$\Phi_\star$};
\end{tikzpicture}
\caption{$\Phi_\star$ is a homeomorphism.}
\end{figure}

By Proposition~\ref{Morseray}, if $\zeta$ is a $(\qq, \sQ)$--quasi-geodesic ray in $X$ then $\Phi \zeta$ is in a $\kappa$-neighbourhood of a unique $\kappa$-Morse geodesic ray, up to sublinear fellow travelling. Fix a particular geodesic representative $c$ from that class $[\Phi \zeta]$ and let $(\qq', \sQ')$ be the smallest pair of constants such that 

\[ \Phi \zeta \in \calN_\kappa(c, \mm_{b_Y}(\qq', \sQ')). \]

By Proposition~\ref{imageMorse} $\mm_{b_Y}(\qq', \sQ')$ only depends on $q, Q, L, \theta$ and $\zeta$. Thus we can consider a function $\Phi' \from \RR^2 \to \RR^2$ that takes (\qq, \sQ) to $(\qq', \sQ')$ by the map $\Phi$ on $\kappa$-Morse quasi-geodesic rays. Roughly speaking, $\Phi \zeta$ "can be thought of" as a $(\qq', \sQ')$-quasi-geodesic ray in terms of their sublinear deviation from the geodesic representative. 


Next, let $b_Y$ be the representative geodesic ray in $\bfb_Y$ and let $\mm_{b_X}$ and 
$\mm_{b_Y}$ be their $\kappa$-Morse gauges respectively. 
By Lemma~\ref{L:symmetry}, there is
a constant $\nn_1$ depending on $L, \kappa$ such that 
\[
\Phi b_X \subset \calN_\kappa(b_Y, \nn_1). 
\]
Next, we let 
\[
\nn = L \big( \mm_{b_Y}(\qq', \sQ') + \nn_1\big) (L + 1) + 1
\]
and let $\sR = \sR(b_X, \rr, \nn, \kappa)$ as in Definition~\ref{defn:Morse}. Choose $\rr'$ such that $\rr' \geq L \,\sR + \kappa(r)$. Now let  $\alpha \in \bfa_X$ be a $(\qq, \sQ)$--quasi-geodesic in $X$, where
\begin{enumerate}
\item $\Phi \alpha$ is a ray that is associated with an element in $\pka Y$ that is in $\calU_{\kappa}(\bfb_Y, \rr')$, and 
\item $\qq, \sQ$ is such that  $\mm_{b_X}(\qq, \sQ)$ is small compared to $\rr$.
\end{enumerate}
Pick $x \in \alpha_X|_{\sR}$. Then $\Phi x \in \Phi \alpha|_{\rr'}$ and we have 
\begin{align*} 
d_X(x, b_X) &\leq L (d_Y(\Phi(x), \Phi b_X) + \theta(x)\\
&\leq L \Big(d_Y(\Phi(x), b_Y) + d(\Phi(x)_{b_Y}, b_X) \Big) + \theta(x)\\
& \leq L \Big(d_Y(\Phi(x), b_Y) + \nn_1 \cdot \kappa(\Phi x) \Big) + \theta(x)\\
& \leq L \big( \mm_{b_Y}(\qq', \sQ')+ \nn_1\big) \cdot \kappa(\Phi x) + \theta(x)\\
& \leq L \big( \mm_{b_Y}(\qq', \sQ') + \nn_1\big) \cdot \kappa(\Phi x) + \kappa(x)
\end{align*} 
We also have 
\[
\kappa(\Phi x) \leq L \kappa(x) + \theta(x) \leq (L + 1) \kappa(x)
\]
Combine the preceding inequalities we have
\begin{align*} 
d_X(x, b_X) &\leq L \big( \mm_{b_Y}(\qq', \sQ') + \nn_1\big) \cdot  (L + 1) \kappa(x) + \kappa(x)\\
                    &\leq \Big (L \big( \mm_{b_Y}(\qq', \sQ') + \nn_1\big) (L + 1) +1 \Big) \kappa(x)
\end{align*}
imply that 
\[
\alpha|_{\sR} \subset \calN_\kappa(b_X, \nn). 
\]
Now, Definition~\ref{defn:Morse} implies that 
\[
\alpha|_\rr \subset \calN_\kappa(b_X, \mm_{b_X}). 
\]
Therefore, $\bfa_X \in \calU_{\kappa}(\bfb_X, \rr)$ and
\[
(\Phi_\star)^{-1} \calU_{\kappa}(\bfb_Y, \rr') \subset \calU_{\kappa}(\bfb_X, \rr). 
\]
But $ \calU_{\kappa}(\bfb_Y, \rr') $ contains an open neighbourhood of $\bfb_Y$, therefore, 
$\bfb_Y$ is in the interior of $\Phi \calV$. This finishes the proof. 
\end{proof}

\subsection{An application}
\label{subsec:application-of-thA}

In \cite{Be19}, Behrstock considers an infinite family of right-angled Coxeter groups. 
The definining graphs for one of the smallest pair of Behrstock's groups are as follows. $\Gamma_{14}$ is the simple graph on $28$ vertices $\{a_1,a_2,\ldots ,\,a_{14}, b_{14}\}$ where for $c,c' \in \{a,b \}$ there is an edge between $c_i$ and $c'_j$ if and only if $\vert i - j \vert =1$.
$\Gamma$ is the graph $\Gamma_{14}$ to which we add a $5$-cycle $a_1 -a_4 - a_7 - a_{10} - a_{13}$.

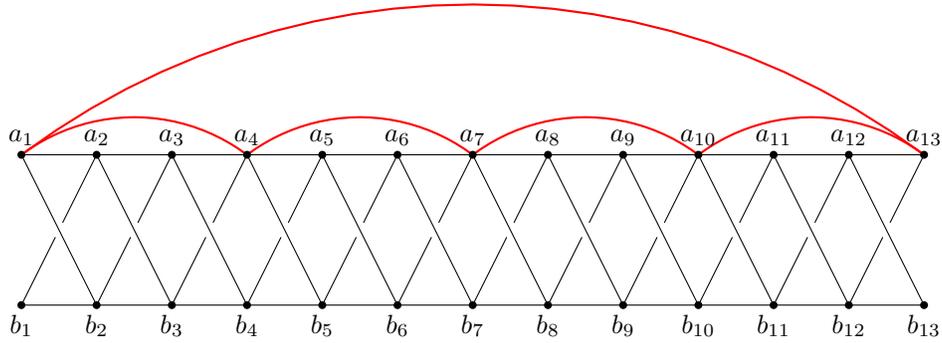
\begin{figure}[h!]

\begin{center}
\begin{tikzpicture}[line cap=round,line join=round,>=triangle 45,x=1cm,y=1cm]
\clip(-8,-3) rectangle (8,4);

\draw [shift={(-4.5,-1)}, thick, red] plot[domain=0.93:2.21,variable=\t]({1*2.5*cos(\t r)+0*2.5*sin(\t r)},{0*2.5*cos(\t r)+1*2.5*sin(\t r)});
\draw [->] (-2,-4) -- (1,-4);
\draw [shift={(-1.5,-1)}, thick, red] plot[domain=0.93:2.21,variable=\t]({2.5*cos(\t r)},{1*2.5*sin(\t r)});
\draw [shift={(1.5,-1)}, thick, red] plot[domain=0.93:2.21,variable=\t]({1*2.5*cos(\t r)},{1*2.5*sin(\t r)});
\draw [shift={(4.5,-1)}, thick, red] plot[domain=0.93:2.21,variable=\t]({1*2.5*cos(\t r)},{1*2.5*sin(\t r)});
\draw [shift={(0,-7)}, thick, red] plot[domain=0.93:2.21,variable=\t]({1*10*cos(\t r)},{10*sin(\t r)});
\foreach \i in {-6,-5,-4,-3,-2,-1,0,1,2,3,4,5}
{\draw (\i,-1)-- (\i+1,-1);
\draw (\i,1)-- (\i+1,1);
\draw (\i,-1)-- (\i+0.45,-0.1);
\draw (\i+1,1)-- (\i+0.55,0.1);
\draw (\i,1)-- (\i+1,-1);
\pgfmathtruncatemacro{\j}{\i+7};
\fill [color=black] (\i,1) circle (1.5pt) node[above]{$a_{\j}$};
\fill [color=black] (\i,-1) circle (1.5pt) node[below]{$b_{\j}$};
}
\fill [color=black] (6,1) circle (1.5pt) node[above]{$a_{13}$};
\fill [color=black] (6,-1) circle (1.5pt) node[below]{$b_{13}$};
\end{tikzpicture}
\caption{The graph $W$ (when compared to $W_{13}$, additional edges are colored).}
\end{center}
\end{figure}

Let $W_{13}$ and $W$ be the right-angled Coxeter groups with defining graphs $\Gamma_{13}$ and $\Gamma'$ as above respectively.

\begin{lemma}
$W$ and $W_{13}$ have quadratic divergence. In particular, they are not hyperbolic relative to any of their subgroups of infinite index, and their asymptotic cones are not tree-graded spaces.
\end{lemma}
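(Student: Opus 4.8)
The plan is to obtain the quadratic divergence from the combinatorics of the defining graphs, and then read off the two ``in particular'' assertions from standard facts about divergence and relative hyperbolicity. For the divergence itself I would invoke the criterion of Dani and Thomas: for a connected, triangle-free graph that is not a join and has no separating clique (so that the associated RACG is one-ended and $2$-dimensional), the divergence is quadratic if and only if the graph is \emph{CFS}, meaning that the auxiliary graph whose vertices are the induced $4$-cycles of $\Gamma$, with two squares joined when they share a diagonal (a pair of non-adjacent opposite vertices), has a connected component whose union of vertices is all of $V(\Gamma)$. Thus the whole statement reduces to checking these hypotheses and the CFS condition for $\Gamma_{13}$ and for $\Gamma$ (the groups being those of Behrstock \cite{Be19}).

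I would begin with the elementary checks. Both graphs are triangle-free: in $\Gamma_{13}$ adjacency is exactly $\vert i-j\vert=1$, and no three indices are pairwise at distance $1$; the five added edges join $a$-vertices of non-consecutive index and, as one checks, create no common neighbour, so they produce no triangle. Both graphs are connected, and neither is a join nor admits a separating clique, because every vertex has bounded valence while the graph is long; hence $W_{13}$ and $W$ are one-ended, $2$-dimensional right-angled Coxeter groups. For the CFS property of $\Gamma_{13}$ I would exhibit, for $1\leqslant i\leqslant 12$, the induced square $S_i=\{a_i,b_i,a_{i+1},b_{i+1}\}$, which is the $4$-cycle $a_i-a_{i+1}-b_i-b_{i+1}-a_i$ with diagonals $\{a_i,b_i\}$ and $\{a_{i+1},b_{i+1}\}$; consecutive squares $S_i$ and $S_{i+1}$ share the diagonal $\{a_{i+1},b_{i+1}\}$, so $S_1,\dots,S_{12}$ form a connected path in the square graph whose support is all of $V(\Gamma_{13})$.

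For $\Gamma$ I would verify that the $5$-cycle on $a_1,a_4,a_7,a_{10},a_{13}$ neither converts any diagonal $\{a_i,b_i\}$ into an edge (the new edges lie among $a$-vertices only) nor inserts a chord in any $S_i$ (the new edges join non-consecutive indices, whereas each $S_i$ uses only indices $i,i+1$); hence every $S_i$ survives as an induced square and the same chain shows $\Gamma$ is CFS. By Dani--Thomas this yields quadratic divergence for both $W_{13}$ and $W$. For the consequences: a one-ended, non-elementary group that is hyperbolic relative to infinite-index subgroups contains Morse geodesics avoiding the peripheral cosets, and these have at least exponential divergence; since the group divergence dominates the divergence of any geodesic, the quadratic bound rules this out, so neither group is relatively hyperbolic relative to an infinite-index subgroup. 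Moreover, CFS RACGs are thick of order one in the sense of Behrstock--Drutu--Mosher, so no asymptotic cone has a cut point; as a tree-graded space with non-degenerate pieces always has cut points, no asymptotic cone is tree-graded, which by the Drutu--Sapir characterization reproves the failure of relative hyperbolicity.

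The main obstacle, such as it is, will be the bookkeeping that the extra $5$-cycle edges preserve both $2$-dimensionality and the square chain rather than destroying squares or creating triangles; once this is settled everything else is either a direct citation (Dani--Thomas for the divergence computation, Drutu--Sapir and Behrstock--Drutu--Mosher for the two consequences) or a routine valence and connectivity argument.
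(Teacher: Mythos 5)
Your overall route is the same as the paper's: check the Dani--Thomas $\mathcal{CFS}$ condition for both defining graphs, quote \cite{DT} to get quadratic divergence, and then obtain the two ``in particular'' clauses from Sisto's exponential lower bound on the divergence of relatively hyperbolic groups \cite{Sistorel} and from Dru\c{t}u--Sapir \cite{DS}. Your $\mathcal{CFS}$ verification is leaner than the paper's: you chain the twelve squares $S_i=\{a_i,b_i,a_{i+1},b_{i+1}\}$ along the shared diagonals $\{a_{i+1},b_{i+1}\}$, while the paper enumerates all four-cycles ($\lambda_i^a,\lambda_i^b,{\lambda'}_i^a,{\lambda'}_i^b,\mu_i$, plus the sixteen extra ones created by the $5$-cycle) and works with the auxiliary graph in which two four-cycles are adjacent when they share an \emph{edge} of the defining graph. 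Note that your shortcut genuinely requires the diagonal-adjacency convention (consecutive squares $S_i$, $S_{i+1}$ share no edge of $\Gamma$), and that is indeed the convention of \cite{DT} and \cite{Be19}, so this part is fine; you also verify hypotheses (triangle-freeness, not a join, one-endedness) that the paper leaves tacit, although ``bounded valence while the graph is long'' does not by itself rule out separating cliques --- a long path has both properties and every interior vertex separates it; the honest check is that consecutive columns $\{a_i,b_i\}$ and $\{a_{i+1},b_{i+1}\}$ span a $K_{2,2}$, so deleting any vertex or closed edge leaves the columns chained together.

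The genuine error is your treatment of the tree-graded clause. The implication ``thick of order one $\Rightarrow$ no asymptotic cone has a cut point'' is false: the mapping class group is thick of order one, yet Behrstock showed that every one of its asymptotic cones has cut points. Worse, your conclusion contradicts what you just proved: by Dru\c{t}u--Mozes--Sapir, a space all of whose asymptotic cones are cut-point free has linear divergence, so quadratic divergence forces \emph{some} asymptotic cone of $W$ and of $W_{13}$ to have a cut point, and any complete geodesic space with a cut point is nontrivially tree-graded with respect to its canonical pieces by \cite{DS}. So the clause cannot be read as ``no cone is tree-graded as an abstract space''; the intended meaning, and the paper's argument, is that the groups are not \emph{asymptotically tree-graded} in the sense of \cite{DS}, i.e.\ their cones are not tree-graded with respect to the $\omega$-limits of cosets of any candidate peripheral structure. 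That property is equivalent to relative hyperbolicity by \cite{DS}, hence is excluded by your (correct) Sisto-style divergence argument for the first clause. Replacing your final paragraph by this deduction completes the proof.
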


The fact that $W$ has quadratic divergence and is not relatively hyperbolic was checked by Behrstock \cite{Be19}. We follow the same method, and apply it to $W_{13}$ as well below.

\begin{proof}
Let us prove that $W$ and $W_{13}$ have the Dani-Thomas $\mathcal {CFS}$ property, defined in \cite[4.2]{DT}.
Dani and Thomas associate new graphs to the defining graphs of right-angled Coxeter groups. 
Let us denote $\Lambda$ and $\Lambda_{13}$ the graphs associated to $\Gamma$ and $\Gamma_{13}$ respectively. 
The vertex sets in $\Lambda$ and in $\Lambda_{13}$ are the four-cycles in $\Gamma$ and $\Gamma_{13}$ respectively, and there is an edge between $\lambda$ and $\lambda'$ if $\lambda$ and $\lambda'$ share a common edge.
In the graph $\Lambda$, there are $56$ vertices, described in terms of loops in $\lambda$ as follows:
\begin{eqnarray*}
\lambda_i^a  = a_i - a_{i+1} - a_{i+2} - b_{i+1} & i=1,\ldots ,11 \\
\lambda_i^b  = b_i - b_{i+1} - b_{i+2} - a_{i+1} & i=1,\ldots ,11 \\
{\lambda'}_i^a  = a_i - a_{i+1} - b_{i+2} - b_{i+1} & i=1,\ldots ,11 \\
{\lambda'}_i^b  = b_i - b_{i+1} - a_{i+2} - a_{i+1} & i=1,\ldots ,11. \\
\mu_i = a_i - a_{i+1} - b_i - b_{i+1} & i = 1,\ldots, 12
\end{eqnarray*}
There is an edge $\lambda_i^a$ and ${\lambda'}_j^a$ and between $\lambda_i^b$ and ${\lambda'}_j^b$ if and only if  $0 \leqslant j -i \leqslant 1$, an edge between ${\lambda'}_i^a$ and ${\lambda'}_j^{b}$ if and only if $\vert j -i \vert = 1$, an edge between $\lambda_i^a$ and ${\lambda'}_j^b$ and between $\lambda_i^b$ and ${\lambda'}_j^a$ if and only if $0 \leqslant i - j \leqslant 1$, and, finally, an edge between $\mu_i$ and $\lambda^{a}_j$ or $\lambda^b_j$ if $0\leqslant i - j \leqslant 1$. It is clear from this that $\Lambda$ is connected. Moreover, any vertex of $\Gamma$ is present in at least one loop that is represented by a vertex in $\Lambda$, that is, using the words of Dani-Thomas, the single component of $\Lambda$ has full support. This is precisely the property $\mathcal{CFS}$.

It now follows from \cite{DT} that the groups $W$ and $W_{13}$ have quadratic divergence.
Since the relatively hyperbolic groups have exponential divergence $W$ and $W_{13}$ are not relatively hyperbolic \cite{Sistorel}, and their asymptotic cones are not tree-graded spaces \cite{DS}.

Let's now consider the graph $\Lambda_{13}$. As compared to $\Lambda$, there are sixteen additional loops, namely
\begin{align*}
a_i -a_{i+3} - a_{i+2} - a_{i+1} \\
a_i - a_{i+3} - b_{i+2} - b_{i+1} \\
a_i - a_{i+3} -b_{i+2} - a_{i+1} \\
a_i - a_{i+3} - a_{i+2} - b_{i+1}
\end{align*}
for $i \in \{ 1, 4, 7, 10 \}$.
All these new loops are easily seen to be connected in $\Lambda_{13}$ to one of the $\lambda_i^a$, $\lambda_i^b$, ${\lambda'}_i^a$, ${\lambda'}_i^b$. It follows that $W_{13}$ has the $\mathcal{CFS}$ property.
\end{proof}
\begin{lemma}\label{totaldisconnect}
Let $\kappa$ be a sublinear function, $\pka W_{13}$ is totally disconnected. 
\end{lemma}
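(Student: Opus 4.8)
The plan is to show that the $\kappa$-Morse boundary $\partial_\kappa W_{13}$ is totally disconnected by exhibiting, for any two distinct points, a separation of the whole space into two disjoint relatively open sets each containing one of the points. The natural strategy is to exploit the product-like (right-angled) structure of $W_{13}$: the defining graph $\Gamma_{13}$ is built from the ``ladder'' $\Gamma_{13}$ on the $a_i$'s and $b_i$'s, and the Davis complex of a right-angled Coxeter group decomposes along the separating structure coming from vertices and cut-sets in $\Gamma_{13}$. Since the previous lemma shows $W_{13}$ has the $\mathcal{CFS}$ property and quadratic divergence, its Morse directions are highly constrained: the $\kappa$-Morse geodesic rays must avoid the ``flat'' (quadratically-diverging) parts and run through the hierarchically tree-like part of the complex. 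I would first recall or establish that $\kappa$-Morse rays in a $\mathcal{CFS}$ RACG correspond to rays that are ``eventually determined'' by a sequence of local combinatorial choices (which generator to cross at each stage), so that $\partial_\kappa W_{13}$ embeds into an inverse limit of finite discrete sets.

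First I would set up the combinatorial coding. In the Davis complex $\Sigma$ of $W_{13}$, a $\kappa$-Morse geodesic ray, being forced to stay uniformly close to its geodesic representative and to avoid large flats, is asymptotically governed by the sequence of ``walls'' (hyperplanes, i.e. cosets of the $\langle a_i \rangle$ and $\langle b_i \rangle$) that it crosses. The key point is that the $\kappa$-Morse condition, via Definition~\ref{defn:Morse} and Proposition~\ref{straightandnot}, makes the crossing-pattern eventually stable under $\kappa$-fellow-travelling: two $\kappa$-Morse rays are equivalent in $\partial_\kappa$ if and only if they cross the same walls outside a bounded region. I would therefore define, for each $R>0$, the finite partition of $\partial_\kappa W_{13}$ according to which wall of distance $\leqslant R$ from $\go$ is the last one crossed before radius $R$ (using properness of $\Sigma$ to ensure finiteness). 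Each block of this partition is both open and closed in the cone topology of $\partial_\kappa$ recalled in \S\ref{subsec:dfn-pka-boundaries}, because membership is detected by the $\calN_\kappa(\beta, m)$-neighbourhood condition at a fixed radius, which is exactly how the basic neighbourhoods $\partial\mathcal U(\beta,r)$ are defined.

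Then I would conclude as follows. Given two distinct classes $\mathbf a \neq \mathbf b$ in $\partial_\kappa W_{13}$, their geodesic representatives $\underline a, \underline b$ do not $\kappa$-fellow-travel, so there is a radius $R$ at which they lie in distinct blocks of the radius-$R$ partition above. That block containing $\mathbf a$ and its complement give a clopen separation of $\partial_\kappa W_{13}$ with $\mathbf a$ on one side and $\mathbf b$ on the other. Since every pair of distinct points can be separated by a clopen set, and $\partial_\kappa W_{13}$ is metrizable (hence the clopen sets form a basis is what one checks), the space is totally disconnected.

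\emph{The main obstacle} I anticipate is the middle step: rigorously justifying that the $\kappa$-Morse directions are coded by a discrete (tree-like) branching pattern of wall-crossings, and that this coding is locally constant in the cone topology. In a non-hyperbolic group the Morse rays may fellow-travel through regions where several walls cross, so one must show that the $\kappa$-Morse condition still forces a \emph{unique eventual} crossing sequence up to the $\kappa$-fellow-travelling relation --- i.e.\ that a $\kappa$-Morse ray cannot ``spread out'' across a flat and thereby become ambiguous. This is precisely where the $\mathcal{CFS}$/quadratic-divergence structure from the previous lemma must be invoked: the Morse rays must avoid the $2$-dimensional flats responsible for quadratic divergence, and hence are confined to the $1$-dimensional (tree-like) skeleton of the divergence structure, giving the totally-disconnected (in fact, dimension-zero) combinatorial model. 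Verifying that the blocks of the finite radius-$R$ partitions are genuinely open --- not merely closed --- in $\partial_\kappa$ will require carefully matching the partition to the basic neighbourhoods $\partial\mathcal U(\beta,r)$, using that $\kappa$ dominates the relevant Morse gauges at radius $R$.
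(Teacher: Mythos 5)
Your proposal has a genuine gap at its central step, and it is exactly the step you yourself flag as ``the main obstacle.'' The claim that two $\kappa$-Morse rays define the same point of $\partial_\kappa W_{13}$ if and only if they cross the same walls outside a bounded region is false: the equivalence relation on $\partial_\kappa$ is sublinear tracking (Definition~\ref{defn:sim}), not bounded fellow-travelling, so two equivalent rays may cross entirely different hyperplanes at every radius, as long as the divergence between them grows sublinearly. Consequently your radius-$R$ partition (``which wall of distance $\leqslant R$ is the last one crossed'') is not well defined on equivalence classes, and the asserted clopenness of its blocks has nothing to stand on. Relatedly, the assertion that $\kappa$-Morse rays ``must avoid the flats'' is too strong: a \emph{sublinearly} Morse ray may spend a sublinear amount of time inside a flat, which is precisely the way these rays differ from classically Morse ones, and precisely the way the ambiguity you worry about (a ray ``spreading out'' across a flat) actually occurs. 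Turning the $\mathcal{CFS}$/quadratic-divergence structure into a discrete, locally constant coding of sublinearly Morse directions is the entire difficulty of the lemma, and your outline defers rather than resolves it.

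For comparison, the paper sidesteps this difficulty with a soft argument that imports the hard combinatorics from elsewhere: by Davis--Januszkiewicz, $W_{13}$ contains (up to commensurability/quasi-isometry) the right-angled Artin group $A_{13}$ defined by a path on the vertices $c_i = a_i b_i$; by Theorem 1.1 of Incerti-Medici--Zalloum, $\partial_\kappa A_{13}$ injects continuously into the Gromov boundary of the contact graph of the associated Salvetti complex; contact graphs are quasi-isometric to trees (Hagen), so that Gromov boundary is totally disconnected, hence so is $\partial_\kappa A_{13}$; and quasi-isometry invariance of $\partial_\kappa$ transfers this to $W_{13}$. If you want to salvage your approach, the honest route is essentially to reprove the Incerti-Medici--Zalloum embedding: one must show that the hyperplane (or contact-graph) projection of a sublinearly Morse ray stabilizes to a well-defined point of a tree-like boundary \emph{despite} sublinear excursions into flats, and that this assignment is injective and continuous on $\partial_\kappa$. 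That is a substantial result, not a step one can assert inside this proof.
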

\begin{proof}
Let $c_i = a_ib_i$, then it is known that the following graph defining a right-angled Artin group  on the vertices embeds in  $W_{13}$ as a finite index subgroup \cite{DJ00}.\\
\begin{figure}[h!]
\begin{tikzpicture}[scale=0.7]
 \tikzstyle{vertex} =[circle,draw,fill=black,thick, inner sep=0pt,minimum size=1.5pt]
[thick, 
    scale=1,
    vertex/.style={circle,draw,fill=black,thick,
                   inner sep=0pt,minimum size= 1.5 pt},
                  
      trans/.style={thick,->, shorten >=6pt,shorten <=6pt,>=stealth},
   ]

  \node[vertex] (c1) at (1,0) [label=above:$c_1$] {}; 
    \node[vertex] (c2) at (2,0) [label=above:$c_2$] {}; 
      \node[vertex] (c3) at (3,0) [label=above:$c_3$] {}; 
        \node[vertex] (c4) at (4,0) [label=above:$c_4$] {}; 
          \node[vertex] (c5) at (5,0) [label=above:$c_5$] {}; 
    \node[vertex] (c6) at (6,0) [label=above:$c_6$] {}; 
      \node[vertex] (c7) at ( 7,0) [label=above:$c_7$] {}; 
        \node[vertex] (c8) at (8,0) [label=above:$c_8$] {}; 
          \node[vertex] (c9) at (9,0) [label=above:$c_9$] {}; 
    \node[vertex] (c10) at (10,0) [label=above:$c_{10}$] {}; 
      \node[vertex] (c11) at (11,0) [label=above:$c_{11}$] {}; 
        \node[vertex] (c12) at (12,0) [label=above:$c_{12}$] {}; 
           \node[vertex] (c13) at (13,0) [label=above:$c_{13}$] {}; 
  
  \draw[thick] (c1)--(c2)--(c3)--(c4)--(c5)--(c6)--(c7)--(c8)--(c9)--(c10)--(c11)--(c12)--(c13){};
  \end{tikzpicture}
  \caption{Right-angled Artin group defined by a path, $A_{13}$.}
  \end{figure}
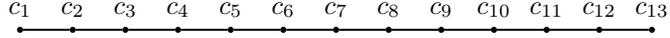

By \cite[Theorem 1.1]{IZ}, $\pka A_{13}$ continuously injects into the Gromov boundary of the contact graph of the associated Salvetti complex. Since contact graphs are quasi-isometric to trees \cite{Hagen14}, their Gromov boundaries are totally disconnected. Thus $\pka A_{13}$ is totally disconnected. Since $W_{13}$ is quasi-isometric to $A_{13}$, we have that $\pka W_{13}$ is also totally disconnected.
\end{proof}

\begin{proposition}
$W$ and $W_{13}$ are not sublinear biLipschitz equivalent.
\end{proposition}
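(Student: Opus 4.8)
The plan is to argue by contradiction, combining Theorem~\ref{invarianttopology} with the fact that being totally disconnected is preserved by homeomorphisms. Suppose that $W$ and $W_{13}$ are sublinear biLipschitz equivalent, so that there is an $(L,\theta)$-sublinear biLipschitz equivalence $\Phi \from W \to W_{13}$ for some $L \geqslant 1$ and some concave, nondecreasing, strictly sublinear function $\theta$. Choose any sublinear function $\kappa$ dominating $\theta$; one may simply take $\kappa = \theta$. Theorem~\ref{invarianttopology} then yields a homeomorphism $\Phi_\star \from \pka W \to \pka W_{13}$. By Lemma~\ref{totaldisconnect} the space $\pka W_{13}$ is totally disconnected, and through $\Phi_\star$ the same would have to hold for $\pka W$. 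It therefore suffices to exhibit inside $\pka W$ a connected subset with more than one point, which cannot occur in a totally disconnected space; this is the desired contradiction, and it is what reflects the difference of topological dimension announced in the introduction.

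Such a subset comes from the pentagon built into $\Gamma$. Inspecting the edges of $\Gamma_{14}$, no two of the indices in $\{1,4,7,10,13\}$ differ by $1$, so the induced subgraph of $\Gamma$ on the vertices $a_1, a_4, a_7, a_{10}, a_{13}$ consists of exactly the five edges of the added cycle; that is, it is the $5$-cycle $C_5$. The corresponding special subgroup $W_{C_5} = \langle a_1, a_4, a_7, a_{10}, a_{13} \rangle \leqslant W$ is the right-angled Coxeter group on $C_5$. Since $C_5$ contains no induced $4$-cycle, $W_{C_5}$ is word-hyperbolic; in fact it is the reflection group of a right-angled hyperbolic pentagon, a cocompact Fuchsian group acting on $\HH^2$, so its Gromov boundary is a circle $S^1$.

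The heart of the argument is to show that $W_{C_5}$ is a \emph{stable} (strongly quasi-convex, i.e. Morse) subgroup of $W$, so that its geodesic rays are uniformly Morse in $W$ and its boundary circle embeds into the Morse boundary, hence into $\pka W$. Standard subgroups of Coxeter groups are always quasi-convex, but stability is stronger and must be read off from the combinatorics of $\Gamma$: a direct check shows that no vertex of $\Gamma$ is adjacent to three of the five cycle-vertices (each pentagon vertex $a_i$ has, apart from its two pentagon-neighbours, only the neighbours $a_{i\pm1}, b_{i\pm1}$ inherited from $\Gamma_{14}$, and none of those is adjacent to a further pentagon vertex). Consequently no flat can fellow-travel a pentagon geodesic over an unbounded length, which is exactly the condition guaranteeing that $W_{C_5}$ is stable in $W$, via the combinatorial criteria for Morse standard subgroups of right-angled Coxeter groups. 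Stability gives a continuous injection $S^1 \cong \partial W_{C_5} \hookrightarrow \partial_M W$; and since every Morse ray is $\kappa$-Morse for any unbounded sublinear $\kappa$ and the inclusion $\partial_M W \hookrightarrow \pka W$ is continuous and injective, we obtain a continuous injective image of $S^1$ inside $\pka W$. As $S^1$ is connected and infinite, its image is a connected subset of $\pka W$ with infinitely many points, contradicting the total disconnectedness forced in the first paragraph. Hence $W$ and $W_{13}$ are not sublinear biLipschitz equivalent.

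The step I expect to be the main obstacle is precisely the stability of $W_{C_5}$ in $W$ together with the resulting topological embedding of the circle: passing from the combinatorial observation that no vertex of $\Gamma$ sees three pentagon vertices to a genuine Morse/contracting estimate for pentagon geodesics in the Davis complex of $W$, and then transferring that Morse property faithfully into the sublinearly Morse boundary $\pka W$. Everything else — the reduction via Theorem~\ref{invarianttopology} and Lemma~\ref{totaldisconnect}, and the identification $\partial W_{C_5} = S^1$ — is routine once stability is established.
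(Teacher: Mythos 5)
Your strategy coincides with the paper's own proof: reduce via Theorem~\ref{invarianttopology} (taking $\kappa=\theta$ is legitimate, since $\theta$ has all the properties required of $\kappa$ and trivially dominates itself) to showing that $\partial_\kappa W$ and $\partial_\kappa W_{13}$ are not homeomorphic, invoke Lemma~\ref{totaldisconnect} for total disconnectedness of $\partial_\kappa W_{13}$, and exhibit a circle inside $\partial_\kappa W$ arising from the $5$-cycle of $\Gamma$ through stability of the associated pentagon subgroup. Your identification of the special subgroup $W_{C_5}=\langle a_1,a_4,a_7,a_{10},a_{13}\rangle$ as the right-angled hyperbolic pentagon reflection group, with Gromov boundary $S^1$, is correct and agrees (up to commensurability) with the group $\Delta$ used in the paper; your contradiction framing is logically equivalent to the paper's direct comparison of the two boundaries.

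The genuine gap is exactly the step you flag yourself: stability of $W_{C_5}$ in $W$. You deduce it from the observation that no vertex of $\Gamma$ is adjacent to three of the five pentagon vertices, asserting that this is ``exactly the condition'' preventing flats from fellow-travelling pentagon geodesics and hence guaranteeing stability. That implication is not a citable theorem, and no proof is offered: the combinatorial characterizations of Morse/stable parabolic subgroups of right-angled Coxeter groups that do exist in the literature are phrased in terms of different hypotheses (roughly, induced $4$-cycles meeting the subgraph in essential ways), and you neither state such a criterion precisely nor verify its hypotheses; nor do you justify that ``no flat fellow-travels a pentagon geodesic unboundedly'' implies stability in a non-hyperbolic ambient group. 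This is precisely the nontrivial content that the paper outsources to Behrstock \cite{Be19} --- the very article this pair of groups comes from --- whose main construction includes a proof that this embedding is stable. With that citation your argument closes; without it, the heart of the proof is missing. (A secondary point, on which the paper is equally terse: passing from a stable subgroup with circle boundary to a continuous injective image of $S^1$ in $\partial_\kappa W$ requires knowing that boundedly close Morse rays versus sublinearly tracking rays give the same identifications and that the resulting map is continuous; since $\partial_\kappa W$ is metrizable and $S^1$ is compact, once continuity and injectivity are granted the image is indeed an embedded circle, which is all you need.)
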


\begin{proof}
The $5$-cycle in $\Gamma$ induces an embedding of a finite-index subgroup of the Coxeter group $\Delta = \langle r,s, t \mid (rs)^5, (st)^2, (tr)^4 \rangle$ in $W$. $\Delta$ itself is a finite-index subgroup of a hyperbolic surface group. Behrstock proves that the embedding in $W$ is stable. Since $\Delta$ is a finite index subgroup of a hyperbolic group, the sublinearly Morse boundary of $\Delta$ is also a circle. On the other hand, by Lemma~\ref{totaldisconnect}, $\pka W_{13}$ is totally disconnected and thus it does not contain a circle. Therefore $\pka W$ and $\pka W_{13}$ are not homeomorphic, and thus we conclude from Theorem~\ref{invarianttopology} that $W$ and $W_{13}$ are not sublinear biLipschitz equivalent.
\end{proof}

It follows from the work of Hagen, Kazachkov and Casals-Ruiz \cite{HKR} that $W$ and $W_{13}$ have unique asymptotic cones.
Up to our knowledge, it was not known whether the asymptotic cones of $W_{13}$ and $W$ are biLipschitz homeomorphic or not. 

\begin{remark}
It is also worth noting that even though we distinguish the $\kappa$-Morse boundaries of $W_{13}$ and $W$ by the Morse boundaries that are proper subspaces of their $\kappa$-Morse boundaries, Morse boundaries in general are not SBE-invariant, therefore it was necessary to invoke Theorem~\ref{invarianttopology}.
\end{remark}

\section{Random walk on groups and sublinear rays}
\label{sec:random-walks}

\subsection*{Random walks}
Let $G$ be a countable group, and let $\mu$ be a probability measure on a symmetric generating set of $G$.
We consider the \emph{step space} $(G^\mathbb{N}, \mu^\mathbb{N})$, whose elements we denote as $(g_n)$. 
The \emph{random walk driven by }$\mu$ is the $G$-valued stochastic process $(w_n)$, where for each $n$ we define the product
$$w_n := g_1 g_2 \dots g_n.$$
We denote as $(\Omega, \mathbb{P})$ the \emph{path space}, i.e. the space of sequences $(w_n)$, where $\mathbb{P}$ is the measure induced by pushing forward the measure $\mu^\mathbb{N}$ from the step space. Elements of $\Omega$ are called \emph{sample paths} and will be also denoted as $\omega$. Finally, let $T : \Omega \to \Omega$ be the left shift on the path space. 

\subsection*{Background on boundaries}
Let us recall some fundamental definitions from the boundary theory of random walks. For a more extensive exposition, see \cite{Kai00}. 
Let $(B, \mathcal{A})$ be a measurable space on which $G$ acts by measurable isomorphisms;  a measure $\nu$ on $B$ is $\mu$-\emph{stationary} if $\nu = \int_G g_\star \nu \ d\mu(g)$, and in that case the pair $(B, \nu)$ is called a $(G, \mu)$-\emph{space}. 
Recall that a \emph{$\mu$-boundary} is a measurable $(G, \mu)$-space $(B,\nu)$ such that there exists 
a $T$-invariant, measurable map $\textbf{bnd} : (\Omega, \mathbb{P}) \to (B, \nu)$, called the \emph{boundary map}. 

Moreover, a function $f: G \to \mathbb{R}$ is $\mu$-\emph{harmonic} if $f(g) = \int_G f(gh) \ d\mu(h)$ for any $g \in G$. 
We denote by $H^\infty(G, \mu)$ the space of bounded, $\mu$-harmonic functions. 
One says a $\mu$-boundary is the \emph{Poisson boundary} of $(G, \mu)$ if the map 
\[
\Psi : H^\infty(G, \mu) \to L^\infty(B, \nu)
\]
given by $\Psi(f)(g) := \int_B f \ dg_\star \nu$ is a bijection. 
The Poisson boundary $(B, \nu)$ is the maximal $\mu$-boundary, in the sense that for any other $\mu$-boundary $(B', \nu')$ there exists a $G$-equivariant, 
measurable map $p: (B, \nu) \to (B', \nu')$. The result of this section concerns the shape of all sample paths.

\begin{theorem}\label{randomwalkisthetaray}
Let $G$ be the mapping class group $Mod(S)$ of a finite type surface, or let $G$ be a relatively hylic group. Let $\mu$ be a probability measure on G with finite first moment with respect to the metric $d$, such that the semigroup generated by the support of $\mu$ is a non-amenable group. Let $\kappa(r) = \log (2+r)$. Then there exists a constant $A$ such that almost every sample path $(w_n)$ is such that $(w_n \go)$ is a $\kappa$-sublinear ray. Moreover, the upper and lower large-scale Lipschitz constants of $(w_n)$ are equal, that is, there exists $A$ such that 
\begin{equation}
    A \vert n - m \vert - \kappa(\max (n,m)) \leqslant d(w_n, w_m) \leqslant A \vert n-m \vert + \kappa(\max (n,m)).
    \label{eq:randomwalk}
\end{equation}

\end{theorem}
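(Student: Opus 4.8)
The plan is to derive \eqref{eq:randomwalk} from two classical features of random walks in this setting — a positive, deterministic drift and logarithmic deviation estimates — and then to read off that $(w_n\go)$ is a logarithmic ray. First I would fix the drift. The sequence $n \mapsto d(\go, w_n\go)$ is subadditive along the cocycle $w_{n+m} = w_n \cdot (g_{n+1}\cdots g_{n+m})$ and is integrable because $\mu$ has finite first moment, so Kingman's subadditive ergodic theorem provides a deterministic constant
\[ A = \lim_{n\to\infty} \frac{d(\go, w_n\go)}{n}, \]
attained almost surely. Non-amenability of the subsemigroup generated by $\operatorname{supp}\mu$ makes the action non-elementary, which forces $A>0$; it also guarantees that $w_n\go$ converges almost surely to a boundary point $\xi(\omega)$, along which I fix a geodesic ray $\gamma_\omega$ with $\gamma_\omega(0)=\go$. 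For $G=\operatorname{Mod}(S)$ and for relatively hyperbolic $G$ these statements are furnished by the boundary theory recalled above and by \cite{Ti15}.

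The core of the argument is the pair of logarithmic deviation bounds: almost surely, for all sufficiently large $n$,
\[ \bigl| d(\go, w_n\go) - An \bigr| \le C\log(2+n) \quad\text{and}\quad d(w_n\go, \gamma_\omega) \le C\log(2+n). \]
Here the hyperbolic-like geometry of the two families of groups is essential: the geodesics that the walk tracks are contracting, which yields exponential decay for the probability that $w_n\go$ escapes a shadow of logarithmic size, and a Borel–Cantelli argument then upgrades the \emph{sublinear} tracking of \cite[Theorem 6]{Ti15} to the \emph{logarithmic} rate above. I expect this to be the main obstacle. The transversal bound is the more robust of the two; the longitudinal bound on $d(\go,w_n\go)$ is more delicate, since it implicitly constrains the single-step displacements seen along the walk, and it is precisely here that the finite-first-moment hypothesis together with the sharp deviation inequalities for these groups must be pushed to their logarithmic form.

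Granting the two estimates, \eqref{eq:randomwalk} follows by a nearest-point-projection comparison. For $n\ge m$, let $\gamma_\omega(\tau_n)$ and $\gamma_\omega(\tau_m)$ be nearest-point projections of $w_n\go$ and $w_m\go$ to $\gamma_\omega$. The transversal estimate gives $d(w_n\go, \gamma_\omega(\tau_n))\le C\log(2+n)$, and since $\tau_n = d(\go,\gamma_\omega(\tau_n))$ differs from $d(\go,w_n\go)$ by at most $C\log(2+n)$, the longitudinal estimate yields $|\tau_n - An|\le C'\log(2+n)$. Because $\gamma_\omega$ is geodesic, $d(\gamma_\omega(\tau_n),\gamma_\omega(\tau_m)) = |\tau_n-\tau_m|$, so two applications of the triangle inequality sandwich $d(w_n\go, w_m\go)$ between $A|n-m|$ plus and minus a term of size $O(\log(2+\max(n,m)))$. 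This is \eqref{eq:randomwalk} up to the value of the multiplicative constant in front of the logarithm, which does not affect the conclusion. Finally, \eqref{eq:randomwalk} is the defining inequality \eqref{eq:kappa-ray} of an $(A,\kappa)$-ray evaluated at integer times, so applying the connect-the-dots construction of Lemma~\ref{lem:connect-dots} to the points $(w_n\go)$ produces a continuous completion that is a sublinear ray in the sense of Definition~\ref{dfn:theta-ray} with logarithmic gauge, that is, a logarithmic ray, completing the proof.
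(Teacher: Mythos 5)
Your proposal follows the same architecture as the paper's proof: a deterministic positive drift $A$, logarithmic tracking of the sample path by a limit geodesic $\gamma_\omega$, a projection/triangle-inequality comparison along $\gamma_\omega$ to obtain \eqref{eq:randomwalk}, and a connect-the-dots step at the end. One difference is bookkeeping rather than substance: the transversal bound $d(w_n\go,\gamma_\omega)\le C\log (2+n)$, which you single out as ``the main obstacle'' and propose to re-derive via shadow estimates and Borel--Cantelli, is not proved in the paper either --- it is quoted directly as Theorem C of \cite{QRT20}, which holds exactly under the hypotheses of the theorem. So that part of your plan should simply be replaced by a citation.

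The genuine gap is the longitudinal estimate $\bigl|d(\go,w_n\go)-An\bigr|\le C\log(2+n)$, which your projection comparison needs (it is what gives $|\tau_n-An|\le C'\log(2+n)$) and which you assert without proof. None of the ingredients you invoke supplies it: Kingman's theorem yields only $d(\go,w_n\go)=An+o(n)$, and positive drift (Maher--Tiozzo plus the distance formula, which is what the paper uses) yields only the linear lower bound $d(\go,w_n\go)\ge An$. Worse, under a bare finite-first-moment hypothesis the estimate is false, so no Borel--Cantelli argument can ``push it to logarithmic form'': already for the simple random walk on a free group (a relatively hyperbolic group satisfying all the hypotheses), the law of the iterated logarithm gives fluctuations of $d(\go,w_n\go)$ around $An$ of order $\sqrt{n\log\log n}$, so the upper bound in \eqref{eq:randomwalk} with $m=0$ fails infinitely often almost surely; similarly, heavy-tailed step distributions with finite mean violate the case $m=n-1$, since $d(w_{n-1}\go,w_n\go)=d(\go,g_n\go)$ exceeds $\log(2+n)$ infinitely often. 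For what it is worth, the paper's own proof stumbles at exactly the same place: after the correct lower bound $d(\go,p_n)\ge An-C\log n$, it asserts in a limiting display that $d(w_n\go,\gamma_\omega(An))\le C\log n$ without justifying the required upper bound on $d(\go,p_n)$, which is precisely the unproved (and, in this generality, false) longitudinal bound. Your blind reconstruction therefore reproduces both the architecture and the weak point of the paper's argument; what the cited results actually support is the unparametrized statement that the sample path stays within logarithmically growing distance of a geodesic ray, not the parametrized inequality \eqref{eq:randomwalk}.
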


\begin{proof}
By Theorem $C$ in \cite{QRT20}, 
\[
\limsup_{n \to \infty} \frac{d_w(w_n, \gamma_\omega)}{\log n} < +\infty.
\]
Thus there exists a $C$ such that 
\[
\limsup_{n \to \infty} d_w(w_n, \gamma_\omega) \leq C \log n
\]
By \cite{MaherTiozzo}, weakly hyperbolic groups have positive drift on their respective associated hyperbolic  spaces. And by the Distance Formula (Masur-Minsky \cite{MM00}),  distance (to the origin) in the random walk on the group is coarsely bounded by the distance to the origin in the associated curve graph $d_S$. Thus random walks on a mapping class groups have positive drifts (\cite{MaherTiozzo}) That is to say, there exists an $A$ such that for $n$ large enough
\[
d(w_n \go, \go) \geq An.
\]
Let $p_n$ denote the nearest point projection of $w_n$ to $\gamma_\omega$. We have by triangle inequality
\[ d(\go, p_n) \geq d(w_n \go, \go) -d(w_n \go, p_n)) \geq An -C \log n.\]
We have that as $n \to \infty$
\[
An \geq \lim d(\go, p_n) \lim \geq An -C \log n = An = d(\go, \gamma(An)),
\]
where the last equality comes from the fact that $\gamma$ is unit speed. Therefore 
\[ d(w_n\go, p_n) = C \log n = d(w_n \go, \gamma(An)).\]
Next we construct a map on $\gamma$ such that for each $i \in \NN$ and $t\in [i-\frac 12, i+\frac 12)$  we define:
\[\mathcal T(\gamma(At)) := w_i.\]
We need to show that this is a sublinear bi-Lipschitz equivalence. For any given $t, t'$, assume that $t\in [i-\frac 12, i+\frac 12)$ and $t' \in [j-\frac 12, j+\frac 12)$ and also assume without loss of generality that $j-i$, 
we have that 
\begin{align*}
d(\mathcal T(\gamma(At), \gamma(At')) &= d(w_i, w_j) \leq d(w_i, \gamma(Ai)) + |j-i|A + d(w_j, \gamma(Aj))\\
                                                     &\leq A|j-i| + 2C \log \Vert w_j \Vert \\
                                                     &\leq A|t-t'+1| + 2C \log \Vert w_j \Vert \\
                                                     &\leq A|t-t'| + 2C \Vert \log w_j \Vert +1
\end{align*}
On the other hand we have 
\begin{align*}
d(\mathcal T(\gamma(At), \gamma(At')) &= d(w_i, w_j) \\
                                                     &\geq A|j-i| - d(w_i, \gamma(Ai)) - d(w_i, \gamma(Ai)) \\
                                                     &\geq A |j-i|-2C \log j \\
                                                     &\geq  A |t-t'|-A -2C \log \Vert w_j \Vert.
                                                     \end{align*}
Thus $\mathcal T$ is an SBE and the image of the geodesic $\gamma_\omega$ contains the sample path and is an $(A, \log n )$-ray. The proof for $G$ is a relatively hylic group is identical, using also the facts that relatively hylic group is weakly hyperbolic and there is also a distance formula that is similar to that of the mapping class group \cite{sistopaper}.
\end{proof}

More generally, aside from the aforementioned two groups, the conclusion can be applied to a wider range of countable groups with a compact  boundary. We say a boundary is \emph{stably visible} if any sequence of geodesics whose endpoints converges to two distinct points on the boundary intersects some bounded set of $X$.
We can then combine the same argument as Theorem~\ref{randomwalkisthetaray} and apply Theorem 6 in \cite{Ti15} to obtain the following:
\begin{theorem}\label{countablegroups}
 Let $G$ be a countable group acting via isometries on a proper, geodesic, metric space $(X,d)$ with a non-trivial, stably visible compactification. Let $\mu$ be a probability measure on G with finite first moment with respect to $d$, such that the semigroup generated by the support of $\mu$ is a non-amenable group. Then there exists a constant $A$ and a sublinear function $\kappa$ such that almost every sample path $(w_n)$ is such that $(w_n \go)$ is a $(A, \kappa)$-biLipschitz ray.
\end{theorem}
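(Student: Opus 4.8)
The plan is to run the proof of Theorem~\ref{randomwalkisthetaray} almost verbatim, substituting for its two group-specific inputs their analogues in the present generality. In Theorem~\ref{randomwalkisthetaray} the logarithmic tracking was supplied by Theorem~C of \cite{QRT20} and the positivity of the drift by \cite{MaherTiozzo} together with the Masur--Minsky distance formula; here both are replaced by Theorem~6 of \cite{Ti15}. Concretely, I would first invoke that theorem to obtain, for almost every sample path $\omega$, a geodesic ray $\gamma_\omega$ issuing from $\go$ with $d(w_n\go,\gamma_\omega)/n \to 0$, together with positivity of the drift $A := \lim_n d(w_n\go,\go)/n$. By Kingman's subadditive ergodic theorem this limit exists almost surely, is finite because $\mu$ has finite first moment, and is a deterministic constant; this is the constant $A$ of the statement.

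Next I would package the two sublinear quantities into a single sublinear function. Setting
\[
f(n) = d(w_n\go,\gamma_\omega) + \bigl| d(\go,w_n\go) - An \bigr|,
\]
the previous step gives $f(n) = o(n)$, and any nonnegative function that is $o(n)$ is dominated by a concave, nondecreasing, strictly sublinear function $\kappa \geq 1$ (take the running maximum of the least concave majorant of $f$: for every slope $b>0$ an affine function of slope $b$ eventually lies above $f$, which forces the majorant to have $\kappa(t)/t \to 0$). This $\kappa$---a priori depending on $\omega$, whereas $A$ does not---is the one appearing in the conclusion, consistent with the weaker phrasing of Theorem~\ref{th:randomwalkintro2} that each sample path is merely a sublinear ray.

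With these in hand the geometry is the same triangle-inequality bookkeeping as in Theorem~\ref{randomwalkisthetaray}. Letting $p_n$ be a nearest-point projection of $w_n\go$ onto $\gamma_\omega$, the definition of $f$ gives both $d(w_n\go,p_n)\le\kappa(n)$ and $|d(\go,p_n)-An|\le 2\kappa(n)$, so that, $\gamma_\omega$ being parametrized by arc length, $d(w_n\go,\gamma_\omega(An))\le 3\kappa(n)$. Inserting $\gamma_\omega(An)$ and $\gamma_\omega(Am)$ and using $d(\gamma_\omega(An),\gamma_\omega(Am)) = A|n-m|$ then yields
\[
A|n-m| - 6\kappa(\max(n,m)) \leq d(w_n\go, w_m\go) \leq A|n-m| + 6\kappa(\max(n,m)),
\]
which, after replacing $\kappa$ by the sublinear function $6\kappa$, is exactly \eqref{eq:randomwalk}. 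Hence $(w_n\go)$ is an $(A,\kappa)$-biLipschitz ray; equivalently, the assignment $\mathcal T(\gamma_\omega(At)) = w_i\go$ for $t \in [i-\tfrac12,i+\tfrac12)$ is a sublinear biLipschitz equivalence onto its image, exactly as in Theorem~\ref{randomwalkisthetaray}.

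I expect the only genuine difficulty to lie in the very first step, namely the two conclusions drawn from Theorem~6 of \cite{Ti15}: that almost every sample path selects a geodesic ray deviating sublinearly, and that the drift is strictly positive. In the setting of Theorem~\ref{randomwalkisthetaray} both facts rest on ambient hyperbolicity, which is unavailable here; their replacements use instead the hypotheses that the compactification is nontrivial and stably visible and that the semigroup generated by the support of $\mu$ is non-amenable, the stable visibility being precisely what lets one convert convergence of two independent sample paths to distinct boundary points into a well-defined tracking geodesic. Everything downstream of that input is routine.
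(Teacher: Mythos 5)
Your proposal is correct and takes essentially the same route as the paper: the paper's entire ``proof'' of Theorem~\ref{countablegroups} is the one-line remark that one combines the argument of Theorem~\ref{randomwalkisthetaray} with Theorem~6 of \cite{Ti15}, which is exactly what you carry out, with Tiozzo's sublinear tracking and positive drift replacing the inputs from Theorem~C of \cite{QRT20} and the Masur--Minsky distance formula. Your write-up in fact supplies details the paper leaves implicit (Kingman's theorem for the drift, the concave sublinear majorant of the tracking error, the nearest-point-projection bookkeeping, and the observation that $\kappa$ depends on the sample path while $A$ does not), all consistent with the intended argument.
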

Example of such compactifications includes but are not limited to:
\begin{enumerate}
\item the hyperbolic compactification of Gromov hyperbolic spaces;
\item the end compactification of Freudenthal and Hopf \cite{Hop44};
\item the Floyd compactification (Section 3.2, \cite{Ti15});
\item the visual compactification of a large class of CAT(0) spaces (Section 3.4, \cite{Ti15}).
\item the redirecting compactification of asymptotically tree-graded spaces (\cite{QR23}).
\end{enumerate}

\end{document}